\newtheorem{thm}{Theorem}[section]
\newtheorem{lem}[thm]{Lemma}
\newtheorem{cor}[thm]{Corollary}
\newtheorem{prop}[thm]{Proposition}
\newtheorem{conj}[thm]{Conjecture}
\theoremstyle{definition}
\theoremstyle{remark}
\newtheorem*{rmk}{Remark}
\numberwithin{equation}{section}
\newcommand{\eps}{\varepsilon}
\newcommand{\ConstEfd}{B_d}
\newcommand{\Econstdone}{B_{d,1}}
\newcommand{\Econstdp}{B_{d,p}}
\newcommand{\COVconstd}{B_d^\prime}
\newcommand{\COV}{\alpha}
\newcommand{\sepDist}{\vartheta}
\newcommand{\DEF}{{\,:=\,}}
\newcommand{\FED}{{\,=:\,}}
\newcommand{\EulerGamma}{\gamma}
\newcommand{\PT}[1]{\mathbf{#1}}
\newcommand{\PTCfg}{X}
\newcommand{\R}[1]{\mathbb{R}^{#1}}
\newcommand{\Sph}[1]{\mathbb{S}^{#1}}
\DeclareMathOperator{\dd}{\mathrm{d}}
\DeclareMathOperator{\betafcn}{B}
\DeclareMathOperator{\gammafcn}{\Gamma}
\DeclareMathOperator{\gammafcnregularizedP}{P}
\DeclareMathOperator{\digammafcn}{\psi}
\DeclareMathOperator{\IncompleteBetaRegularized}{I}
\DeclareMathOperator{\prob}{{Prob}}
\DeclareMathOperator{\var}{Var}
\DeclareMathOperator{\HyperF}{F}
\newcommand{\Hypergeom}[5]{{\sideset{_#1}{_#2}\HyperF\!\left(\substack{\displaystyle#3\\\displaystyle#4};#5\right)}}
\newcommand{\Pochhsymb}[2]{{\left(#1\right)_{#2}}}
\newcommand{\MARKED}[2]{{\textcolor{black}{#2}}}
\title{Random Point Sets on the Sphere --- Hole Radii, Covering, and Separation}
\author[J.S.~Brauchart, A.B. Reznikov, E.B.~Saff, I.H.~Sloan, Y.G.~Wang and R.S.~Womersley]{J.~S.~Brauchart, A. B. Reznikov, E.~B.~Saff, I.~H.~Sloan, Y.~G.~Wang and R.~S.~Womersley}
\thanks{\noindent This research was supported under Australian Research Council's Discovery Projects funding scheme (project number DP120101816).
The research of J.~S. Brauchart was also supported by the Austrian Science Fund FWF projects F5510 (part of the Special Research Program (SFB) ``Quasi-Monte Carlo Methods: Theory and Applications''). The research of A.~B. Reznikov and E.~B. Saff was also supported by U.S. National Science Foundation grants DMS-1412428 and DMS-1516400. The authors also acknowledge the support of the Erwin Schr{\"o}dinger Institute in Vienna, where part of the work was carried out.}
\date{\today}
\begin{document}

\address{J.~S.~Brauchart:
Institut f\"ur Analysis und Zahlentheorie,
Technische Universit\"at Graz,
Steyrergasse 30,
8010 Graz,
Austria
}
\email{j.brauchart@tugraz.at}
\address{A.~B. Reznikov, E.~B.~Saff:
Center for Constructive Approximation,
Department of Mathematics, 
Vanderbilt University,
Nashville, TN 37240,
USA}
\email{edward.b.saff@vanderbilt.edu, aleksandr.b.reznikov@vanderbilt.edu}
\address{I.~H.~Sloan, Y.~G.~Wang and R.~S.~Womersley: 
School of Mathematics and Statistics,
University of New South Wales,
Sydney, NSW, 2052,
Australia }
\email{i.sloan@unsw.edu.au, yuguang.e.wang@gmail.com, r.womersley@unsw.edu.au}

\begin{abstract}
Geometric properties of $N$ random points distributed independently and uniformly on the unit sphere $\mathbb{S}^{d}\subset\mathbb{R}^{d+1}$ with respect to surface area measure are obtained and several related conjectures are posed.
In particular, we derive asymptotics (as $N \to \infty$) for the expected moments of the radii of spherical caps associated with the facets of the convex hull of $N$ random points on $\mathbb{S}^{d}$.
We provide conjectures for the asymptotic distribution of the scaled radii of these spherical caps
and the expected value of the largest of these radii (the covering radius).
Numerical evidence is included to support these conjectures.
\MARKED{red}{Furthermore, utilizing the extreme law for pairwise angles of Cai et al., we derive precise asymptotics for the expected separation of random points on $\mathbb{S}^{d}$.} 
\end{abstract}

\keywords{Spherical random points; covering radius; moments of hole radii; point separation; random polytopes}
\subjclass[2000]{Primary 52C17, 52A22; Secondary 60D05}

\maketitle


\section{Introduction}

This paper is concerned with geometric 
properties of random points distributed independently and uniformly on the unit sphere
$\Sph{d}\subset\R{d+1}$.
The two most common geometric properties 
associated with a 
configuration $\PTCfg_N = \{ \PT{x}_1, \dots, \PT{x}_N \}$ of distinct points on $\Sph{d}$
are the \emph{covering radius} (also known as \emph{fill radius} or \emph{mesh
norm}),
\begin{equation*}
  \COV( \PTCfg_N ) \DEF \COV( \PTCfg_N; \Sph{d} ) \DEF
  \max_{\PT{y} \in \Sph{d}} \min_{1 \leq j \leq N} \arccos( \PT{y}, \PT{x}_j ),
\end{equation*}
which is the largest geodesic distance from a point in $\Sph{d}$ to the
nearest point in $\PTCfg_N$ (or the geodesic radius of the largest
spherical cap that contains no points from $\PTCfg_N$), and the
\emph{separation distance}
\begin{equation*}
  \MARKED{red}{\sepDist( \PTCfg_N )} \DEF \min_{\substack{1 \leq j, k \leq N \\ j \neq k}} \arccos( \PT{x}_j, \PT{x}_k ),
\end{equation*}
which gives the least geodesic distance between two points in $\PTCfg_N$.
\MARKED{red}{
(For related properties of random geometric configurations on the sphere and in the Euclidean space see, e.g., \cite{ArSa2015,BaGoHe2013,CaChe2014,Mi1970,Mo1989}.)
}

\MARKED{red}{One of our} main contributions in this paper concerns a different but related
quantity, namely the sum of powers of the ``hole radii''.  A point
configuration $\PTCfg_N$ on $\Sph{d}$ uniquely defines a convex polytope, 
namely the convex hull of the point configuration. In
turn, each facet of that polytope defines a ``hole'', which we take to
mean the maximal spherical cap for the particular facet
that contains points of $\PTCfg_N$ only on its boundary. The connection
with the covering problem is that the geodesic radius of the largest hole
is the covering radius $\COV( \PTCfg_N )$.

If the number of facets (or equivalently the number of holes)
corresponding to the point set $\PTCfg_N$ is $f_d$ (itself a random
variable for a random set $\PTCfg_N$), then the holes can be labeled from $1$ to $f_d$.  It turns out
to be convenient to define the  $k$th hole radius $\rho_k = \rho_k( \PTCfg_N )$ to be the
\emph{Euclidean} distance in $\R{d+1}$ from the cap boundary to the center
of the spherical cap \MARKED{red}{located on the sphere} ``above'' the $k$th facet,
so $\rho_k = 2 \sin(\alpha_k/2)$, where $\alpha_k$ is the geodesic radius of the cap.

For arbitrary $p\ge 0$, our results concern the sums of the $p$th powers of
the hole radii,
\begin{equation*}
\sum_{k=1}^{f_d} (\rho_k)^p.
\end{equation*}
It is clear that for large $p$ the largest holes dominate, and that
\begin{equation*}
  \lim_{p\to\infty} \left(\sum_{k=1}^{f_d} (\rho_k)^p\right)^{1/p} =
  \max_{1 \leq k \leq f_d} \rho_k \FED \rho(\PTCfg_N) = 2 \sin(\COV( \PTCfg_N ) / 2 ),
\end{equation*}
using the conversion from the geodesic radius $\COV(\PTCfg_N)$ to
the Euclidean radius of the largest hole.

To state our result for the expected moments of the hole radii \MARKED{red}{(Theorem~\ref{thm:sums.of.powers.of.holes.radii})}, we utilize the following notation dealing with random polytopes.
Let $\omega_d$ be the surface area of $\Sph{d}$, so $\omega_d = 2 \pi^{(d+1) /2} / \Gamma((d+1)/2)$, and define (using $\omega_0 = 2$)
\begin{equation} \label{eq:Sdconst}
\kappa_d \DEF \frac{1}{d} \frac{\omega_{d-1}}{\omega_d} =
\frac{1}{d} \frac{\Gamma((d+1)/2)}{\sqrt{\pi}\,\Gamma(d/2)},
\qquad
\ConstEfd \DEF \frac{2}{d+1}\; \frac{\kappa_{d^2}}{(\kappa_d)^d}, \qquad d = 1, 2, 3, \ldots.
\end{equation}
The quantity $\kappa_d$ can also be defined recursively by
\begin{equation*}
\kappa_1 = \frac{1}{\pi}, \qquad
\kappa_d = \frac{1}{2\pi d \kappa_{d-1}}, \quad d = 2,3,\ldots.
\end{equation*}
From \cite{BuMuTi1985}, the expected number of facets\footnote{See also K. Fukuda, %
Frequently asked questions about polyhedral computation,
Swiss Federal Institute of Technology,
\url{http://www.inf.ethz.ch/personal/fukudak/polyfaq/polyfaq.html},
accessed August 2016.} formed from $N$ random points independently and uniformly distributed on $\Sph{d}$
is 
\begin{equation} \label{eq:Buchta.et.al}
\mathbb{E}[f_d] = \ConstEfd \, N \left\{ 1 + o(1) \right\} \qquad \text{as $N \to \infty$.}
\end{equation}
For dimensions $d = 1$ and $d = 2$, if the convex hull is not degenerate (i.e., no two points on~$\Sph{1}$
coincide, or, three adjacent points on~$\Sph{2}$ are not on a great circle),
then \MARKED{red}{$f_1 = N$} and $f_2 = 2N-4$. For higher dimensions, the expected
number of facets grows nearly linearly in $N$, but the slope $\ConstEfd$
grows with the dimension:
\begin{center}
\begin{tabular}{c|c|c|c|c|c|c|c|c}
$d$    & $1$ & $2$ & $3$      & $4$       & $5$      & $6$                  & $7$                  & $8$ \\ \hline
$\ConstEfd$ & $1$ & $2$ & $6.7677$ & $31.7778$ & $186.72$ & $1.2964 \times 10^3$ & $1.0262 \times 10^4$ & $9.0425 \times 10^4$
\end{tabular}
\end{center}

Adapting methods for random polytope results (\cite{BuMuTi1985, Mu1990}), 
we derive the large $N$ behavior of the expected value of the sum of $p$th powers of the hole radii
in a random point set on~$\Sph{d}$ for any real $p > 0$, namely that
\begin{equation} \label{eq:expected.hole.radii}
\frac{1}{\mathbb{E}[ f_d ]} \, \mathbb{E}\left[ \sum_{k=1}^{f_d} (\rho_k)^p \right] = \Econstdp \, N^{-p/d} \left\{ 1 + \mathcal{O}( N^{-2/d} ) \right\} \qquad \text{as $N \to \infty$,}
\end{equation}
where $\Econstdp$ is an explicit constant (see Theorem~\ref{thm:sums.of.powers.of.holes.radii}).
For the $2$-sphere, we further obtain next-order terms for such moments (see \eqref{eq:precise.asymptotics}).
The constant $\Econstdp$ in \eqref{eq:expected.hole.radii} can be interpreted as the $p$th moment of a
non-negative random variable $X$ with probability density function
\begin{equation} \label{eq:asymp.density}
\frac{d}{\gammafcn(d)} \left( \kappa_d \right)^d \; x^{d^2-1} \; e^{ - \kappa_d  \, x^d}, \qquad x \geq 0,
\end{equation}
where $\kappa_d$ is defined by \eqref{eq:Sdconst}.
For $d = 1$ the expression \eqref{eq:asymp.density} reduces to the exponential distribution $\frac{1}{\pi} \, e^{- x/\pi}$,
while for $d = 2$ it reduces to the Nakagami distribution (see \cite{Na1960}).
%
%

Based on heuristic arguments and motivated by the numerical experiments in Figure~\ref{fig:hole.radii.empirical}, we also conjecture that as $N \to \infty$ the scaled radii $N^{1/d} \rho_1, \dots, N^{1/d} \rho_{f_d}$ associated with the facets of the convex hull of $N$ random points on $\Sph{d}$ are (dependent) samples from a distribution with a probability density function which converges to \eqref{eq:asymp.density}; see Conjecture~\ref{conj:1}.

Equation \eqref{eq:expected.hole.radii} suggests a conjecture for the expected value \MARKED{red}{$\mathbb{E}[ \rho( \PTCfg_N ) ]$} of the covering radius of $N$ i.i.d.
(independent and identically distributed) random points on $\Sph{d}$. To state this conjecture, we use (here and throughout the paper) the notation $a_N \sim b_N$ as $N \to \infty$ to mean $a_N / b_N \to 1$ as $N \to \infty$. Stated in terms of \MARKED{red}{the} \emph{Euclidean} covering radius, we propose that
\begin{equation} \label{eq:expected.asymptotics.intro}
\mathbb{E}[ \rho( \PTCfg_N ) ] \sim \COVconstd \, \left( \frac{\log N}{N} \right)^{1/d} \qquad \text{as $N \to \infty$}
\end{equation}
and that the coefficient is $\COVconstd = ( \kappa_d )^{-1/d}$ (see Conjecture~\ref{conj:2} and the remark \MARKED{red}{thereafter}).
For $d = 1$, $\COVconstd = \pi$; see \eqref{eq:covering.radius.circle}. 
The conjecture is consistent with results of Maehara \cite{Ma1988} on the probability that equal sized caps cover the sphere: if the radius is larger than the right-hand side of \eqref{eq:expected.asymptotics.intro} by a constant factor, then the probability approaches one as $N\to\infty$, whereas, if the radius is smaller than the right-hand side by a constant factor, then the probability approaches zero.
Observe that the ``mean value'' of the hole radii (obtained by setting $p = 1$ in \eqref{eq:expected.hole.radii}) already achieves the optimal rate of convergence $N^{-1/d}$. In \cite{BoSaRu2012}, Bourgain et al. prove that $\mathbb{E}[ \rho( \PTCfg_N ) ] \leq N^{-1/2 + o(1)}$ for $N$ i.i.d. random points on $\Sph{2}$ and remark that, somewhat surprisingly, the covering radius of random points is much more forgiving compared to their separation properties (cf. \eqref{eq:expected.separation} below).

For the case of the unit circle $\Sph{1}$, order statistics arguments regarding the placement of points and arrangement of ``gaps'' (i.e., the arcs between consecutive points) are described in \cite[p.~133--135, 153]{DaNa2003}. With $\varphi_k$ denoting the arc length of the $k$th largest gap formed by $N$ i.i.d. random points on $\Sph{1}$, one has 
\begin{equation} \label{eq:k.th.gap}
\mathbb{E}[ \varphi_k ] = \frac{2\pi}{N} \left( \frac{1}{N} + \frac{1}{N-1} + \cdots + \frac{1}{k} \right), \qquad k = 1, \dots, N.
\end{equation}
Thus, the \emph{geodesic} covering radius of $N$ random points has the expected value
\begin{equation} \label{eq:covering.radius.circle}
\begin{split}
\mathbb{E}[ \COV( \PTCfg_N ) ]
&= \frac{1}{2} \mathbb{E}[ \varphi_1 ] = \frac{\pi}{N} \left( \frac{1}{N} + \frac{1}{N-1} + \cdots + \frac{1}{2} + 1 \right) \\
&= \pi \frac{\log N}{N} + \frac{\EulerGamma \pi}{N} +  \frac{\pi}{2 \, N^2} + \mathcal{O}(N^{-3}),
\end{split}
\end{equation}
where $\gamma = 0.5772156649\dots$ is the Euler-Mascheroni constant, which implies that \eqref{eq:expected.asymptotics.intro} holds for $d = 1$ and $\COVconstd = \pi$.
For further background concerning related covering processes, see, e.g., \cite{BueCuLo2010,Ma1988,Sh1972}.

Since separation is very sensitive to the placement of points, unsurprisingly, random points have very poor separation properties.
Indeed, \eqref{eq:k.th.gap} yields that the expected value of
the minimal separation of $N$ i.i.d. random points on the unit circle (in the geodesic metric) is 
\begin{equation*}
\mathbb{E}[ \sepDist( \PTCfg_N ) ] = \mathbb{E}[ \varphi_N ] = \frac{2\pi}{N^2}.
\end{equation*}
This is much worse than the minimal separation $2\pi / N$ of $N$ equally spaced points.
Here we deduce a similar result for $\Sph{d}$ (see Corollary~\ref{cor:limit.random.separation}), namely
\begin{equation} \label{eq:expected.separation}
\mathbb{E}[ \sepDist( \PTCfg_N ) ] \sim C_d \, N^{-2/d} \qquad \text{as $N \to \infty$,}
\end{equation}
where $C_d$ is an explicit constant. This rate should be compared with the
optimal separation order $N^{-1/d}$ for best-packing points on
$\Sph{d}$.
%
Using first principles, we further obtain the lower bound
\begin{equation*}
\mathbb{E}[ N^{2/d} \sepDist( \PTCfg_N ) ] \geq C_d \, L_d, \qquad N \geq 2,
\end{equation*}
with an explicit constant $L_d$ and $C_d$ as in \eqref{eq:expected.separation} (see Proposition~\ref{prop:limit.random.separation}).

The outline of our paper is as follows. In Section~\ref{sec:covering.random.points}, we state our results concerning the moments of the hole radii \MARKED{red}{as well as conjectures dealing with the distribution of these radii and the asymptotic covering radius of random points on $\Sph{d}$. Also included there are graphical representations of numerical data supporting these conjectures.}
%
Section~\ref{sec:separation.random.points} is devoted \MARKED{red}{to the statements of} separation results both in terms of probability and expectation. 
%
In Section~\ref{sec:proofs}, we collect proofs. In Section~\ref{sec:comparison}, we briefly compare numerically the separation and hole radius properties of pseudo-random point sets with those of several popular non-random point configurations on $\Sph{2}$.

\section{Covering of Random Points on the Sphere}
\label{sec:covering.random.points}

\subsection{Expected moments of hole radii}

The facets of the convex hull of an $N$-point set on $\Sph{d}$ are in
one-to-one correspondence with a family of maximal ``spherical cap
shaped'' holes. Each facet determines a $d$-dimensional hyperplane that
divides $\Sph{d}$ into an open spherical cap containing no points and
a complementary closed spherical cap including all points of the
configuration, with at least $d+1$ of them on the boundary.
The convex hull $K_N$ of $N$ independent uniformly distributed random points $\PT{X}_1, \dots, \PT{X}_N$ on $\Sph{d}$ is a \emph{random polytope} with vertices on~$\Sph{d}$ and $f_d$ facets. These facets determine the geodesic radii $\alpha_1, \dots, \alpha_{f_d} \in [0, \pi]$ of all the $f_d$ holes in the configuration.

This connection to convex random polytopes can be exploited to derive probabilistic assertions for the sizes of the holes in a random point set generated by a positive probability density function.
Indeed, the proof of our first result uses geometric considerations and general results on the approximation of convex sets by random polytopes with vertices on the boundary (cf. \cite{BoKaFeHu2013,Rei2002}) to establish large $N$ asymptotics for the expected value of a ``natural'' weighted sum of the (squared) Euclidean radii ${\rho_1 = 2 \sin( \alpha_1 / 2 )}, \dots, {\rho_{f_d} = 2 \sin( \alpha_{f_d} / 2 )}$.

\begin{prop} \label{prop:holes.radii}
Let $\mu$ be a probability measure absolutely continuous with respect to the normalized surface area measure $\sigma_d$ on $\Sph{d}$ with positive continuous density $\eta$. If $\PT{Y}_1, \dots, \PT{Y}_N$ are $N$ points on $\Sph{d}$ that are randomly and independently distributed with respect to $\mu$, then
\begin{equation}
\begin{split} \label{eq:expected.weighted.sum}
\mathbb{E}_{\mu}\Bigg[ \sum_{k=1}^{f_d} \frac{A_{k,N}}{A_N} \, (\rho_k)^2 \Bigg]
&\sim N^{-2/d} \, \frac{\gammafcn(d + 1 + 2/d)}{\gammafcn( d + 1 )} \left( \kappa_d \right)^{-2/d} \int_{\Sph{d}} \left[ \eta( \PT{x} ) \right]^{-2/d} \dd \sigma_d( \PT{x} ) \\
&\phantom{=}+ 2 \, \mathbb{E}_{\mu}\Bigg[ \left( 1 - \frac{A_N}{A} \right) \left( 1 - \frac{V_N / V}{A_N / A} \right) \Bigg] \qquad \text{as $N \to \infty$,}
\end{split}
\end{equation}
where $A_{k,N}$ is the surface area of the $k$th facet of the convex hull, $A_N = \sum_{k=1}^{f_d} A_{k,N}$, and $V_N$ is the volume of the convex hull.
In the last term, $A$ and $V$ denote the surface area of~$\Sph{d}$ and the volume of the unit ball in $\R{d+1}$, respectively.
\end{prop}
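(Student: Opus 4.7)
The plan is to reduce Proposition~\ref{prop:holes.radii} to the known random-polytope asymptotics for the expected volume and surface-area deficits of an inscribed random polytope, connected by a clean algebraic identity. Since $\rho_k = 2\sin(\alpha_k/2)$ we have $\rho_k^2 = 2(1-\cos\alpha_k)$, and the divergence theorem (equivalently, decomposing $K_N$ into pyramids from the origin over each facet) gives
\[
V_N \;=\; \frac{1}{d+1}\sum_{k=1}^{f_d} A_{k,N}\cos\alpha_k.
\]
Multiplying the first relation by $A_{k,N}$, summing over $k$, and inserting the second relation produces the key identity
\[
\sum_{k=1}^{f_d} A_{k,N}\,\rho_k^{2} \;=\; 2\,A_N - 2(d+1)\,V_N.
\]

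Dividing through by $A_N$ and using $V/A=1/(d+1)$ for the unit ball in $\R{d+1}$, this rearranges to
\[
\sum_{k=1}^{f_d}\frac{A_{k,N}}{A_N}\,\rho_k^{2} \;=\; 2\Bigl(1-\frac{V_N/V}{A_N/A}\Bigr).
\]
Writing $a:=A_N/A$ and $v:=V_N/V$, the elementary identity $1-v/a = (a-v) + (1-a)(1-v/a)$ (which follows by expanding the right-hand side) separates the expectation into two pieces:
\[
\mathbb{E}_{\mu}\!\Bigl[\sum_{k=1}^{f_d}\frac{A_{k,N}}{A_N}\rho_k^{2}\Bigr]
\;=\; 2\Bigl(\frac{\mathbb{E}[A_N]}{A}-\frac{\mathbb{E}[V_N]}{V}\Bigr)
+ 2\,\mathbb{E}_{\mu}\!\Bigl[(1-A_N/A)\bigl(1-(V_N/V)/(A_N/A)\bigr)\Bigr].
\]
The second summand is exactly the error-like correction appearing on the right in \eqref{eq:expected.weighted.sum}, so the task reduces to establishing the stated integral asymptotic for the first summand.

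For that, I rewrite $\mathbb{E}[A_N]/A - \mathbb{E}[V_N]/V = (V-\mathbb{E}[V_N])/V - (A-\mathbb{E}[A_N])/A$ and invoke the classical asymptotic expansions for inscribed random polytopes with density (Reitzner \cite{Rei2002}, B\"or\"oczky--Fodor--Hug \cite{BoKaFeHu2013}). Specialized to the unit ball in $\R{d+1}$, where Gaussian and mean curvatures are identically~$1$, both expectations satisfy
\[
\mathbb{E}_{\mu}[V-V_N]\sim c_d^{(V)}\,N^{-2/d}\!\int_{\Sph{d}}\eta^{-2/d}\,\dd\sigma_d,
\qquad
\mathbb{E}_{\mu}[A-A_N]\sim c_d^{(A)}\,N^{-2/d}\!\int_{\Sph{d}}\eta^{-2/d}\,\dd\sigma_d,
\]
with explicit dimensional constants $c_d^{(V)}$ and $c_d^{(A)}$. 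The main obstacle — and really the only nontrivial arithmetic step — is to verify that the combination $2\bigl(c_d^{(V)}/V - c_d^{(A)}/A\bigr)$ collapses precisely to $\gammafcn(d+1+2/d)/\gammafcn(d+1)\cdot(\kappa_d)^{-2/d}$. This demands careful bookkeeping of the $\omega_d$-normalizations (since $\sigma_d$ is the normalized measure here but the cited references typically work with unnormalized surface area), of the Beta/Gamma factors arising in the standard cap-integration, and of the conversion between the integral of $\eta^{-2/d}$ against curvature weights and the purely intrinsic integral stated in the proposition. Once those constants are confirmed to match, combining this with the split from the previous paragraph yields \eqref{eq:expected.weighted.sum}.
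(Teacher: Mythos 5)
Your route is the paper's route: the pyramid formula $V_N=\frac{1}{d+1}\sum_k A_{k,N}a_k$ with $a_k=\cos\alpha_k$, the relation $\rho_k^2=2(1-a_k)$, and your splitting identity $1-v/a=(a-v)+(1-a)(1-v/a)$ reproduce, after taking expectations, exactly the decomposition
$\mathbb{E}_\mu\bigl[\sum_k \tfrac{A_{k,N}}{A_N}\rho_k^2\bigr]=2\,\mathbb{E}_\mu[1-\tfrac{V_N}{V}]-2\,\mathbb{E}_\mu[1-\tfrac{A_N}{A}]+2\,\mathbb{E}_\mu\bigl[(1-\tfrac{A_N}{A})(1-\tfrac{V_N/V}{A_N/A})\bigr]$ used in the paper, with the same asymptotic input (\cite[Theorem~1.1]{BoKaFeHu2013}, cf.\ \cite{Rei2002}). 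The only thing you leave conditional --- that the combination of constants collapses to $\gammafcn(d+1+2/d)/\gammafcn(d+1)\,(\kappa_d)^{-2/d}$ --- is precisely the one quantitative step of the proof, and it does close: specialized to the unit ball and the normalized measure $\sigma_d$, the cited result gives
\[
2\,\mathbb{E}_{\mu}\Bigl[1-\frac{V_N}{V}\Bigr]\sim \frac{\gammafcn(d+2+2/d)}{(d-1)!\,(d+2)}\,\bigl(\kappa_d\bigr)^{-2/d}\,N^{-2/d}\int_{\Sph{d}}\bigl[\eta(\PT{x})\bigr]^{-2/d}\dd\sigma_d(\PT{x}),
\qquad
2\,\mathbb{E}_{\mu}\Bigl[1-\frac{A_N}{A}\Bigr]\sim \frac{d\,\gammafcn(d+1+2/d)}{(d-1)!\,(d+2)}\,\bigl(\kappa_d\bigr)^{-2/d}\,N^{-2/d}\int_{\Sph{d}}\bigl[\eta(\PT{x})\bigr]^{-2/d}\dd\sigma_d(\PT{x}),
\]
and since $\gammafcn(d+2+2/d)=(d+1+2/d)\,\gammafcn(d+1+2/d)$, the difference of the two prefactors equals $\bigl(1+2/d\bigr)\gammafcn(d+1+2/d)/\bigl((d-1)!\,(d+2)\bigr)=\gammafcn(d+1+2/d)/d!=\gammafcn(d+1+2/d)/\gammafcn(d+1)$, which is the constant in \eqref{eq:expected.weighted.sum}. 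So your proposal is structurally correct and essentially identical to the paper's argument; to be complete it only needs these explicit constants written out rather than deferred as ``bookkeeping'' (and, like the paper, it tacitly uses that the origin lies in $K_N$, an event whose complement is exponentially unlikely and does not affect the asymptotics).
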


By $\mathbb{E}_{\mu}$ we mean the expected value with respect to $\mu$. The proof of Proposition~\ref{prop:holes.radii} and other results are given in Section~\ref{sec:proofs}.

We now state one of our main results, which deals with i.i.d. uniformly chosen random points on $\Sph{d}$.

\begin{thm} \label{thm:sums.of.powers.of.holes.radii}
If $p \geq 0$ and $\PT{X}_1, \dots, \PT{X}_N$ are $N$ points on $\Sph{d}$ that are independently and randomly distributed with respect to $\sigma_d$, then
\begin{align}
\mathbb{E}\left[ \sum_{k=1}^{f_d} (\rho_k)^p \right]
&= \ConstEfd \left( \kappa_d \right)^{-p/d} \frac{\gammafcn( d + p/d ) \gammafcn( N + 1 )}{\gammafcn(d) \gammafcn( N + p/d )} \left\{ 1 + \mathcal{O}( N^{-2/d} ) \right\} \label{eq:sums.of.powers.asymptotics} \\
&= c_{d,p} \; N^{1-p/d} \left\{ 1 + \mathcal{O}( N^{-2/d} ) \right\} \notag
\end{align}
as $N \to \infty$, where $\rho_k = \rho_{k,N}$ is the Euclidean hole radius associated with the $k$th facet of the convex hull of $\PT{X}_1, \dots, \PT{X}_N$,
\begin{equation} \label{eq:cdp}
c_{d,p} \DEF \ConstEfd \; \Econstdp, \qquad \Econstdp \DEF \frac{\gammafcn( d + p/d )}{\gammafcn(d)} \left( \kappa_d \right)^{-p/d},
\end{equation}
and $\kappa_d$, $\ConstEfd$ are defined in (\ref{eq:Sdconst}). \MARKED{red}{The $\mathcal{O}$-terms in \eqref{eq:sums.of.powers.asymptotics} depend only on $d$ and $p$.}
\end{thm}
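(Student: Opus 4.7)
The plan is to express the expected sum by an Efron-type identity, reduce to a one-dimensional integral via an affine Blaschke--Petkantschin decomposition adapted to the sphere, and then extract asymptotics by a Laplace-type substitution. Any $d+1$ points on $\Sph{d}$ in general position determine a hyperplane that cuts $\Sph{d}$ into two open caps $C_\pm$ of angular radii $\alpha_\pm$ satisfying $\alpha_++\alpha_-=\pi$; they form a facet of the convex hull of $\PT{X}_1,\ldots,\PT{X}_N$ precisely when all remaining $N-d-1$ points lie in one of the two caps. Linearity of expectation and exchangeability give
\begin{equation*}
\mathbb{E}\!\left[\sum_{k=1}^{f_d}\rho_k^p\right]
= \binom{N}{d+1}\,\mathbb{E}\!\left[\rho_+^p(1-\sigma_d(C_+))^{N-d-1} + \rho_-^p(1-\sigma_d(C_-))^{N-d-1}\right],
\end{equation*}
where $\rho_\pm=2\sin(\alpha_\pm/2)$ and the right-hand expectation is taken over $d+1$ i.i.d.\ uniform random points on $\Sph{d}$.

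Next I would apply the sphere-adapted affine Blaschke--Petkantschin identity to decompose the joint law of $\PT{X}_1,\ldots,\PT{X}_{d+1}$ into the distribution of the hyperplane they span and, conditional on that hyperplane, the uniform law on the intersecting $(d-1)$-sphere. Rotational isotropy then collapses the problem to a single scalar integral in the smaller-cap angular radius $\alpha\in[0,\pi/2]$, against an explicit density $p_\alpha(\alpha)$ built from an averaged inscribed-simplex volume on the $(d-1)$-sphere of intersection, whose leading small-$\alpha$ behavior is $p_\alpha(\alpha)\sim 2\,d^{2}\,\kappa_{d^2}\,\alpha^{d^2-1}$. The factor $(1-\sigma_d(C_+))^{N-d-1}$ concentrates the integral on $\alpha=O(N^{-1/d})$, while the ``large-cap'' term (hole $C_-$ with $\alpha_->\pi/2$) is exponentially suppressed since $\sigma_d(C_-)\geq 1/2$. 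In the concentration regime the small-cap expansions
\begin{equation*}
\sigma_d(C_+) = \kappa_d\alpha^d(1+O(\alpha^2)),\qquad \rho_+ = \alpha(1+O(\alpha^2)),\qquad (1-\sigma_d(C_+))^{N-d-1} = e^{-N\kappa_d\alpha^d}(1+O(N^{-1})),
\end{equation*}
combined with the substitution $t=N\kappa_d\alpha^d$, reduce the dominant integral to a Gamma integral equal to $(N\kappa_d)^{-(d^2+p)/d}\,\gammafcn(d+p/d)/d$. Multiplying by the leading constant $2d^2\kappa_{d^2}$ and by $\binom{N}{d+1}\sim N^{d+1}/(d+1)!$, and simplifying using $\ConstEfd=2\kappa_{d^2}/\bigl((d+1)\kappa_d^d\bigr)$ together with $\gammafcn(d)=(d-1)!$, produces the announced leading term $\ConstEfd\,(\kappa_d)^{-p/d}\,\gammafcn(d+p/d)/\gammafcn(d)\cdot N^{1-p/d} = c_{d,p}N^{1-p/d}$; the sharper first form in \eqref{eq:sums.of.powers.asymptotics} with the ratio $\gammafcn(N+1)/\gammafcn(N+p/d)$ arises from retaining the exact binomial $\binom{N}{d+1}$ in place of its Stirling approximation.

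Each Taylor remainder above contributes $O(\alpha^2)=O(N^{-2/d})$ to the integrand, yielding the claimed $1+O(N^{-2/d})$ relative error, and the tail $\alpha\gtrsim N^{-1/d}(\log N)^{1/d}$ contributes only exponentially small corrections. The main obstacle will be the Blaschke--Petkantschin step: carrying out the sphere-adapted decomposition with the correct curvature-dependent Jacobian and verifying the precise leading constant $2\,d^{2}\,\kappa_{d^2}$ in $p_\alpha$. Fortunately the $p=0$ specialization has to reproduce the Buchta--Müller--Tichy formula~\eqref{eq:Buchta.et.al}, so this consistency check pins the constant down cleanly and reduces the geometric content of the proof essentially to that classical random-polytope result.
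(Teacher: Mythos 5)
Your outline follows essentially the same route as the paper's proof: the exact Efron-type identity over $(d+1)$-subsets, the spherical Blaschke--Petkantschin decomposition (in the paper this is Miles' ``stochastically equivalent sequential method'', which gives the hyperplane-distance density proportional to $(1-a^2)^{d^2/2-1}$ together with the first moment of the inscribed simplex volume), reduction to a one-dimensional radial integral, and a Laplace-type extraction of the asymptotics. Your leading constant $2d^2\kappa_{d^2}=2/\betafcn(d^2/2,1/2)$ and the resulting $c_{d,p}$ check out against the paper's computation, and your two-cap identity is in fact a slightly cleaner bookkeeping than the paper's origin-in-the-hull correction.

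The one genuine gap is in your plan to sidestep the Jacobian computation by pinning the constant through the $p=0$ consistency with \eqref{eq:Buchta.et.al}: that argument identifies only the leading coefficient of the radial density, hence yields $1+o(1)$, whereas the stated relative error $\mathcal{O}(N^{-2/d})$ requires knowing that the density deviates from $c_d\,\alpha^{d^2-1}$ only at relative order $\alpha^2$ (an unruled-out $\mathcal{O}(\alpha)$ correction would degrade the error to $\mathcal{O}(N^{-1/d})$). The paper gets this from the explicit closed form: by Miles' moment formula $m_1(a)\propto(1-a^2)^{d/2}$ the density in the Euclidean radius is proportional to $\rho^{d^2-1}(1-\rho^2/4)^{d^2/2-1}$, a function of $\rho^2$, and all remainders are then controlled uniformly by the incomplete-beta lemma (Lemma~\ref{lem:asymptotics}), which also produces the exact ratio $\gammafcn(N+1)/\gammafcn(N+p/d)$ directly; your remark that this ratio comes from ``retaining the exact binomial'' is accurate only up to a further $1+\mathcal{O}(1/N)$ factor, harmless for $d\ge 2$ but not how the sharper form arises. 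So you do need to carry out the Blaschke--Petkantschin/Miles step explicitly (or at least establish that the radial density is smooth in $\alpha^2$) rather than reduce it to the classical facet-count result.
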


\MARKED{red}{
Figures~\ref{fig:S2HolRadp} and \ref{fig:SdHolRadd} illustrate empirical data that are in good agreement with the assertions of Theorem~\ref{thm:sums.of.powers.of.holes.radii}.} 

\begin{figure}[ht]
\begin{center}
\includegraphics[scale=0.70]{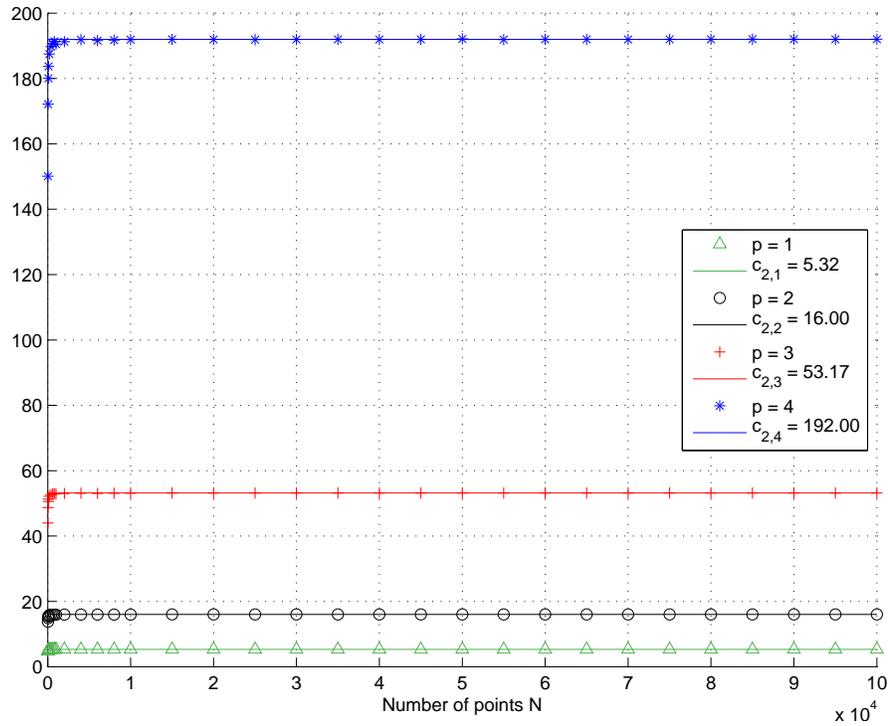} \quad
\caption{Expected value \eqref{eq:sums.of.powers.asymptotics}
of the sum of Euclidean hole radii on $\Sph{2}$ to power $p = 1, 2, 3, 4$,
estimated using $400$ samples of $N$ uniformly distributed points and
scaled by dividing the sample mean by $N^{1-p/2}$,
compared with $c_{2,p}$ given by \eqref{eq:cdp}}
\label{fig:S2HolRadp}
\end{center}
\end{figure}
\begin{figure}[ht]
\begin{center}
\includegraphics[scale=0.70]{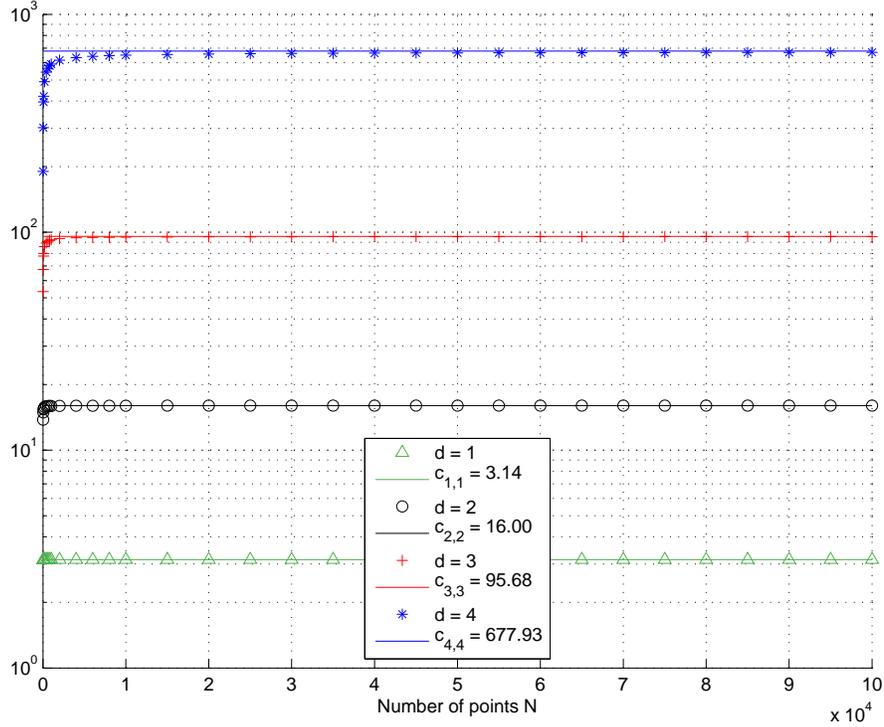} \quad
\caption{
Expected value \eqref{eq:sums.of.powers.asymptotics}
of the sum of Euclidean hole radii on $\Sph{d}$ to power $p = d$,
estimated using $400$ samples of $N$ uniformly distributed points
compared with $c_{d,d}$ given in \eqref{eq:cdp},
for $d = 1, 2, 3, 4$}
\label{fig:SdHolRadd}
\end{center}
\end{figure}

\MARKED{red}{
Notice that for $p=0$ we deduce the following result for the expected number of facets,
\begin{equation} \label{eq:f.d.estimate}
\mathbb{E}\left[ f_d \right] = B_d \; N \left\{ 1 + \mathcal{O}( N^{-2/d} ) \right\} \qquad \text{as $N \to \infty$,}
\end{equation}
which again confirms \eqref{eq:Buchta.et.al}. 
}

\subsubsection{More precise estimates for $\Sph{2}$}
In the case of $\Sph{2}$ we have the more precise asymptotic form (cf. proof of Theorem~\ref{thm:sums.of.powers.of.holes.radii})
\begin{equation} \label{eq:precise.asymptotics}
\begin{split}
&\mathbb{E}\left[ \sum_{k = 1}^{f_2} \left( \rho_k \right)^p \right]
= \left( 2 N - 4 \right) 2^{p} \frac{\gammafcn( 2 + \frac{p}{2} ) \gammafcn( N + 1 )}{\gammafcn( N + 1 + p/2 )} \left\{ 1 + o(1) \right\} \\ 
&\phantom{equals}= \left( 2 N - 4 \right) 2^{p} \gammafcn( 2 + \frac{p}{2} ) \, N^{-p/2} \Bigg\{ 1 + \sum_{\ell=1}^{L-1} \frac{\binom{-p/2}{\ell} \, B_\ell^{(1-p/2)}(1)}{N^\ell} + \mathcal{O}\left( \frac{1}{N^L} \right) \Bigg\},
\end{split}
\end{equation}
where we have omitted an error term that goes exponentially fast to zero as $N \to \infty$.
Here $B_{\ell}^{(a)}(x)$ denotes the generalized Bernoulli polynomial with $B_1^{(1-p/2)}(1) = (1 + p/2) / 2$.
Using \eqref{eq:precise.asymptotics}, we get the following improvement of \eqref{eq:f.d.estimate}: for any positive integer $L$, 
\begin{equation*}
\mathbb{E}[ f_2 ] = \left( 2 N - 4 \right) \Bigg\{ 1 + \mathcal{O}\left( \frac{1}{N^L} \right) \Bigg\} \qquad \text{as $N \to \infty$.}
\end{equation*}
Recall that Euler's celebrated Polyhedral Formula states 
that the number of vertices $f_0$, edges $f_1$ and faces $f_2$ of a convex
polytope satisfy $f_0 - f_1 + f_2 = 2$ and Steinitz~\cite{St1906} proved
that the conditions
\begin{equation*}
f_0 - f_1 + f_2 = 2, \qquad f_2 \leq 2 \, f_0 - 4, \qquad f_0 \leq 2 f_2 - 4
\end{equation*}
are necessary and sufficient for $(f_0,f_1,f_2)$ to be the associated triple for the convex polytope~$K_N$. In particular, if $K_N$ is \emph{simplicial} (all faces are simplices), then $f_2 = 2 \, f_0 - 4$.

The sum of all hole radii, on average, behaves for large $N$ like
\begin{equation*}
\mathbb{E}\left[ \sum_{k = 1}^{f_2} \rho_k \right] \sim \left( 2 N - 4 \right) \left( 2 N \right) \frac{(N-1)!}{\Pochhsymb{5/2}{N-1}} = \left( 2 N - 4 \right) \frac{3}{2} \sqrt{\pi} \, N^{-1/2} \left\{ 1 - \frac{3}{8} \, \frac{1}{N} + \mathcal{O}( \frac{1}{N^2} ) \right\}.
\end{equation*}
Here, $\Pochhsymb{a}{n}$ denotes the Pochhammer symbol defined by $\Pochhsymb{a}{0} \DEF 1$ and $\Pochhsymb{a}{n+1} \DEF ( n + a ) \Pochhsymb{a}{n}$.

Furthermore, on observing that the $\sigma_2$-surface area measure of a spherical cap $C_{\rho_k}$ with Euclidean radius $\rho_k$ is given by (remembering that $\alpha_k$ is the geodesic radius of each cap)
\begin{equation*}
\sigma_2( C_{\rho_k} ) = \frac{\omega_{1}}{\omega_2} \int_{\cos \alpha_k}^1 \dd t = \frac{1}{4} \left( 2 - 2 \; \cos \alpha_k \right) = \frac{\rho_k^2}{4},
\end{equation*}
we obtain
\begin{equation*}
\mathbb{E}\left[ \sum_{k=1}^{f_2} \sigma_2( C_{\rho_k} ) \right] 
= \frac{1}{4} \mathbb{E}\left[ \sum_{k=1}^{f_2} \rho_k^2 \right] 
\sim \frac{2\left( 2 N - 4 \right)}{N+1} 
\sim  4 - \frac{12}{N} \qquad \text{as $N \to \infty$.}
\end{equation*}

The analogous formula for higher-dimensional spheres is
\begin{equation*}
\mathbb{E}\left[ \sum_{k=1}^{f_d} \sigma_d( C_{\rho_k} ) \right] \sim d \, \ConstEfd \left\{ 1 + \mathcal{O}( N^{-2/d} ) \right\} \qquad \text{as $N \to \infty$,}
\end{equation*}
where $\ConstEfd$ is given in \eqref{eq:Sdconst}. (This follows from an inspection of the proof of Theorem~\ref{thm:sums.of.powers.of.holes.radii}.)



\subsection{Heuristics leading to Conjectures~\ref{conj:1} and \ref{conj:2}} The analysis for Theorem~\ref{thm:sums.of.powers.of.holes.radii} relies on the following asymptotic approximation of the expected value
\begin{equation*}
\mathbb{E}\left[ \sum_{k = 1}^{f_d} \left( \rho_k \right)^p \right] \sim \frac{2}{\betafcn( d^2/2, 1/2 )} \, \binom{N}{d+1} \, \int_0^{\tau} \rho^p \left[ 1 - \sigma_d( C_{\rho} ) \right]^{N-d-1} \, \rho^{d^2-1} \left( 1 - \frac{\rho^2}{4} \right)^{d^2/2 - 1} \dd \rho,
\end{equation*}
where $\betafcn(a,b)$ is the Beta function (see \cite[Section~5.12]{NIST:DLMF})
defined by
\begin{equation*}
\MARKED{red}{\betafcn( a, b )} \DEF \int_0^1 t^{a-1}(1-t)^{b-1}\dd t = \frac{\Gamma(a)\Gamma(b)}{\Gamma(a+b)},
\end{equation*}
and,
as before, $\sigma_d( C_{\rho} )$ is the $\sigma_d$-surface area of a spherical cap of Euclidean radius $\rho$. The leading term in the asymptotic approximation (as $N \to \infty$) is not affected by the choice of $\tau \in (0, 2)$. (A change in $\tau$ yields a change in a remainder term not shown here that goes exponentially fast to zero for fixed (or sufficiently weakly growing) $p$.) Apart from a normalization factor (essentially $\mathbb{E}[ f_d ]$), this right-hand side can be interpreted as an approximation of the $p$th moment of the random variable ``hole radius'' associated with a facet of the convex hull of the~$N$ i.i.d. uniformly distributed random points on $\Sph{d}$ whereas the left-hand side up to the normalization can be seen as the empirical distribution of the identically (but not independently) distributed hole radii.
The normalized surface area of $C_{\rho}$ can be expressed in terms of a regularized incomplete beta function, or equivalently, a Gauss hypergeometric function,
\begin{equation} \label{eq:sigma_d.C.rho}
\sigma_d( C_{\rho} ) = \IncompleteBetaRegularized_{\rho^2/4}\left(\frac{d}{2}, \frac{d}{2} \right) = \kappa_d \, \rho^d \, \Hypergeom{2}{1}{1-d/2,d/2}{\MARKED{red}{1+}d/2}{\frac{\rho^2}{4}},
\end{equation}
where (see \cite[Eq. 8.17.1]{NIST:DLMF})
\begin{equation*}
\IncompleteBetaRegularized_x(a,b) \DEF \frac{\int_0^x t^{a-1}(1-t)^{b-1}\dd t}{\int_0^1 t^{a-1}(1-t)^{b-1}\dd t}=\frac{\Gamma(a+b)}{\Gamma(a)\Gamma(b)}\int_0^x t^{a-1}(1-t)^{b-1}\dd t.
\end{equation*}
Hence the change of variable $x = \rho \, N^{1/d}$ leads in
a natural way to the substitution
\begin{equation*}
  \left[ 1 - \sigma_d( C_{x / N^{1/d}} ) \right]^{N-d-1} \approx \left( 1 - \kappa_d \, \frac{x^d}{N} \right)^{N-d-1} \to
  \exp\Big( - \kappa_d \, x^d \Big) \qquad \text{as $N \to \infty$}
\end{equation*}
and thus by the dominated convergence theorem to the
limit relation
\begin{equation} \label{eq:hole.radius.limit.distribution}
\frac{1}{\mathbb{E}[f_d]} \, \mathbb{E}\left[ \sum_{k = 1}^{f_d} \left( \rho_k \, N^{1/d} \right)^p \right] \to
  \int_0^\infty x^p \, e^{ - \kappa_d \, x^d} \, x^{d^2-1} \, \dd x \Bigg/ \int_0^\infty e^{ - \kappa_d \, x^d } \, x^{d^2-1} \, \dd x 
\end{equation}
as $N \to \infty$.
Either integral can be easily computed using the well-known integral representation of the gamma function
(\cite[Eq.~5.9.1]{NIST:DLMF}, \cite{Olver:2010:NHMF}),
\begin{equation*}
  \frac{1}{\mu} \gammafcn( \tfrac{\nu}{\mu} ) \, \frac{1}{z^{\nu / \mu}} = \int_0^\infty e^{- z \, t^\mu} \, t^{\nu - 1} \, \dd t.
\end{equation*}
In fact, the right-hand side of \eqref{eq:hole.radius.limit.distribution} reduces to the same constant
\begin{equation*}
\frac{\gammafcn( d + p/d )}{\gammafcn(d)} \left( \kappa_d \right)^{-p/d}
\end{equation*}
we obtained in Theorem~\ref{thm:sums.of.powers.of.holes.radii}.

Relation \eqref{eq:hole.radius.limit.distribution} suggests that the scaled hole radii $x_1 = \rho_1 N^{1/d}$, \dots, $x_{f_d}
= \rho_{f_d} N^{1/d}$ associated with the facets of the convex hull of
the~$N$ i.i.d. uniformly distributed random points on $\Sph{d}$ are
identically (but not independently) distributed with respect to a PDF that
tends (as $N \to \infty$) to the limit PDF \eqref{eq:hole.radius.PDF}.

\begin{conj} \label{conj:1}
The scaled hole radii $N^{1/d} \rho_1, \dots, N^{1/d} \rho_{f_d}$ associated with the facets of the convex hull of
$N$ i.i.d. random points on $\Sph{d}$ distributed with respect to $\sigma_d$ are (dependent) samples from a distribution with probability density function which converges, as $N \to\infty$, to the limiting distribution with PDF
\begin{equation}\label{eq:hole.radius.PDF}
  f(x) = \frac{d}{\Gamma(d)}\; \kappa_d^d \; x^{d^2-1} \; e^{-\kappa_d x^d}, \qquad x \geq 0,
\end{equation}
where $\kappa_d$ is defined in \eqref{eq:Sdconst}.
\end{conj}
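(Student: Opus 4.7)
The natural strategy is the method of moments applied to the empirical measure of scaled hole radii,
\[
\mu_N \DEF \frac{1}{f_d}\sum_{k=1}^{f_d}\delta_{N^{1/d}\rho_k},
\]
with target the probability measure $\mu_\infty$ on $[0,\infty)$ having PDF $f$ from \eqref{eq:hole.radius.PDF}. The goal is to show $\mu_N \Rightarrow \mu_\infty$ in probability as $N \to \infty$.

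The first (and nearly free) step is convergence of \emph{averaged} moments. For each integer $p \geq 0$, Theorem~\ref{thm:sums.of.powers.of.holes.radii} combined with \eqref{eq:f.d.estimate} yields
\[
\frac{1}{\mathbb{E}[f_d]}\,\mathbb{E}\left[\sum_{k=1}^{f_d}(N^{1/d}\rho_k)^p\right] \longrightarrow \Econstdp = \int_0^\infty x^p f(x)\,\dd x \qquad (N \to \infty),
\]
so the mean empirical measure (appropriately normalized) converges to $\mu_\infty$ in every moment. Since $f$ decays like $e^{-\kappa_d x^d}$, Carleman's condition is satisfied and $\mu_\infty$ is uniquely determined by its moments; hence weak convergence of the mean measure to $\mu_\infty$ follows.

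Next, to pass from the averaged statement to the random measure $\mu_N$, I would prove concentration. It is classical that $\var(f_d) = \mathcal{O}(N)$, so $f_d/\mathbb{E}[f_d] \to 1$ in probability, and it therefore suffices to show, for each fixed $p$, that
\[
\var\!\left(\sum_{k=1}^{f_d}(N^{1/d}\rho_k)^p\right) = o(N^2).
\]
This reduces to a pair-correlation estimate: one must compute
\[
\mathbb{E}\bigg[\sum_{j\neq k}(N^{1/d}\rho_j)^p(N^{1/d}\rho_k)^p\bigg]
\]
via a double Buchta--M\"uller--Tichy type formula, indexed by ordered pairs of $(d+1)$-subsets of $\{1,\dots,N\}$ that simultaneously define two distinct facets of the convex hull, and verify that it is asymptotic to $(\Econstdp)^2\,\mathbb{E}[f_d(f_d-1)]$. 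Given the bound $\mathbb{E}[f_d(f_d-1)] \sim (\ConstEfd N)^2$, this is exactly the cancellation needed.

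The main obstacle is precisely this pair-correlation analysis. Unlike the single-facet integral underlying Theorem~\ref{thm:sums.of.powers.of.holes.radii}, the joint probability that two prescribed simplices are both facets requires the remaining $N-2(d+1)$ points to lie in the intersection of two complementary half-spaces, so one must localize and decouple the two caps at the natural scale $\rho \sim N^{-1/d}$. Heuristically, two random facets are separated by a macroscopic angle with probability $1-o(1)$, so the cap events are asymptotically independent and the pair integral factorizes; however, making this rigorous will require an economic cap-covering argument in the style of Reitzner~\cite{Rei2002} (or the sandwiching techniques of \cite{BoKaFeHu2013}), together with a uniform control of the error as the two caps are allowed to come close. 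This is the step where genuine geometric work is needed and, in my view, the reason the statement remains at the level of a conjecture.
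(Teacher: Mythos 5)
The statement you are addressing is posed in the paper only as a conjecture: the paper offers no proof, just the heuristic in Section~2 in which the normalized expected sums $\mathbb{E}\bigl[\sum_k (N^{1/d}\rho_k)^p\bigr]/\mathbb{E}[f_d]$ are shown (via the substitution $x=\rho N^{1/d}$ and dominated convergence) to converge to $\int_0^\infty x^p f(x)\,\dd x = \Econstdp$. Your first step is exactly that heuristic, repackaged as convergence of moments of the mean empirical measure together with Carleman determinacy, so up to that point you and the paper are on the same ground and your claim is correct.

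Where you go beyond the paper is the program to upgrade this to convergence of the random empirical measure: the variance bound $\var\bigl(\sum_k (N^{1/d}\rho_k)^p\bigr)=o(N^2)$ via a two-facet Buchta--M\"uller--Tichy formula and asymptotic factorization of the pair integral. This is a sensible route, and you are right that it is precisely the missing geometric work; but be aware of three points. First, a small bookkeeping issue: the paper's asymptotics control $\mathbb{E}\bigl[\sum_k(\cdot)\bigr]/\mathbb{E}[f_d]$, whereas the moments of your $\mu_N$ involve $\mathbb{E}\bigl[f_d^{-1}\sum_k(\cdot)\bigr]$; passing between the two needs the concentration of $f_d/\mathbb{E}[f_d]$, so the ``classical'' variance bound $\var(f_d)=\mathcal{O}(N)$ for polytopes with vertices \emph{on} $\Sph{d}$ is itself part of what must be cited or proved, not a free input. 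Second, even if the whole program is carried out, the method of moments yields weak convergence of $\mu_N$ (in probability) to the law with density $f$, which is weaker than the literal wording of the conjecture, namely convergence of the probability \emph{density} of the scaled hole radius; to get density convergence one would instead analyze the exact marginal density of a typical facet's radius (the integrand in the paper's approximation, essentially $\binom{N}{d+1}\rho^{d^2-1}(1-\sigma_d(C_\rho))^{N-d-1}$ up to normalization) and prove pointwise convergence, which is arguably closer to what the paper's heuristic suggests and avoids the pair-correlation analysis altogether for that weaker-but-literal statement. Third, your honest admission that the pair-correlation/decoupling step is not done means the proposal is a program, not a proof --- which is consistent with the fact that the paper leaves the statement as a conjecture.
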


\begin{rmk}
Note that the transformation $Y = X^d$ gives the PDF
\begin{equation*}
\frac{1}{\Gamma(k) \theta^k} \; y^{k-1} \; e^{-y/\theta}, \qquad y \geq 0,
\end{equation*}
where $k = d$ and $\theta = 1/\kappa_d$, showing that $Y = X^d = N \rho^d$ has a Gamma distribution. The surface area of a spherical cap with small radius $\rho$ is approximately given by a constant multiple of $\rho^d$. Thus the surface area of the caps associated with each hole follows a Gamma distribution.
\end{rmk}

Figure~\ref{fig:hole.radii.empirical} illustrates the empirical distribution of the scaled hole radii of i.i.d. uniformly distributed random points on $\Sph{d}$ for $d = 1,2,3,4$ which is in good agreement with the conjectured limiting PDF \eqref{eq:hole.radius.PDF}. For larger values $d > 4$, the convergence of the empirical distribution to the limiting distribution is slow.

\begin{figure}[ht]
\begin{center}
\includegraphics[scale=0.425]{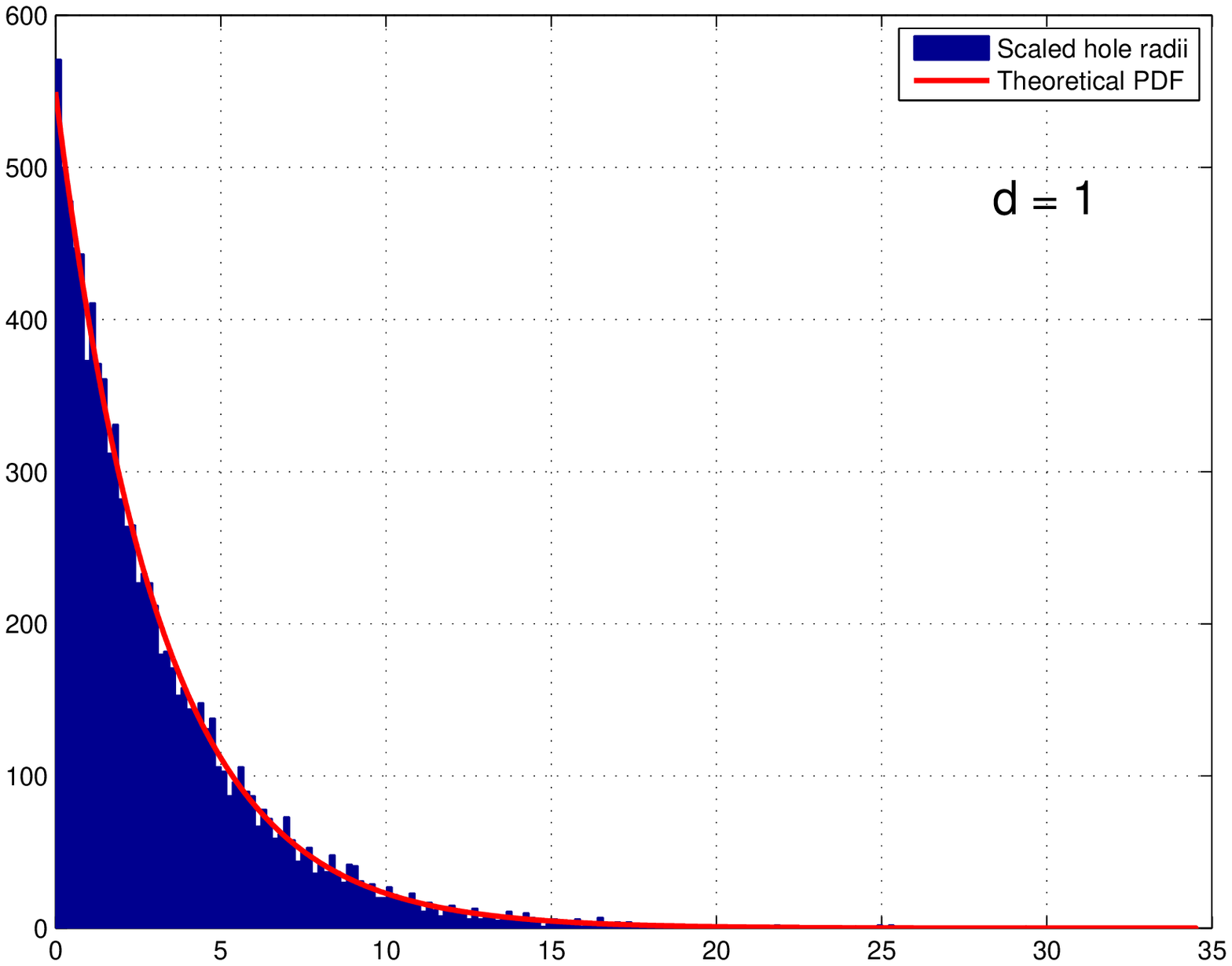} \quad
\includegraphics[scale=0.425]{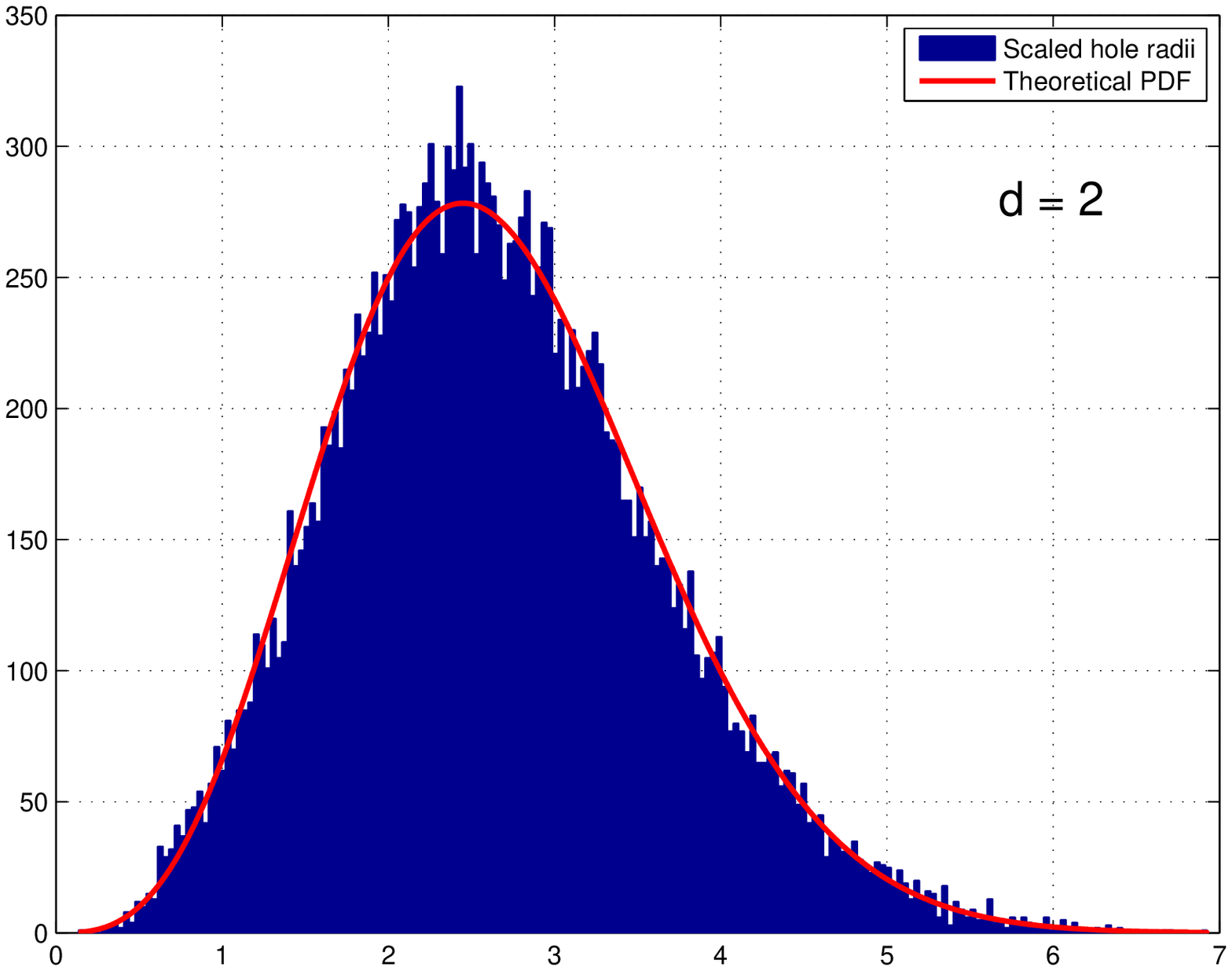} \\
\includegraphics[scale=0.425]{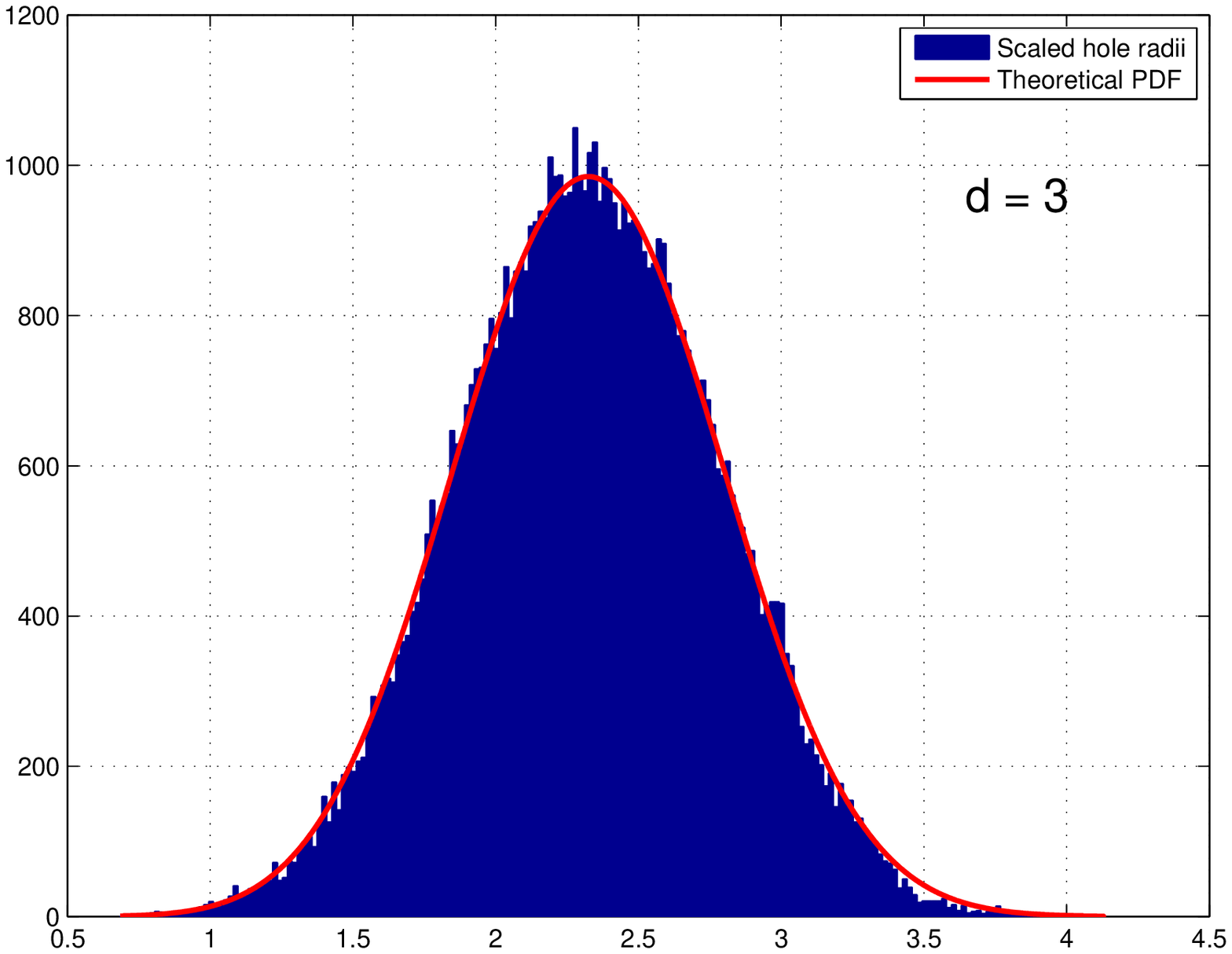} \quad
\includegraphics[scale=0.425]{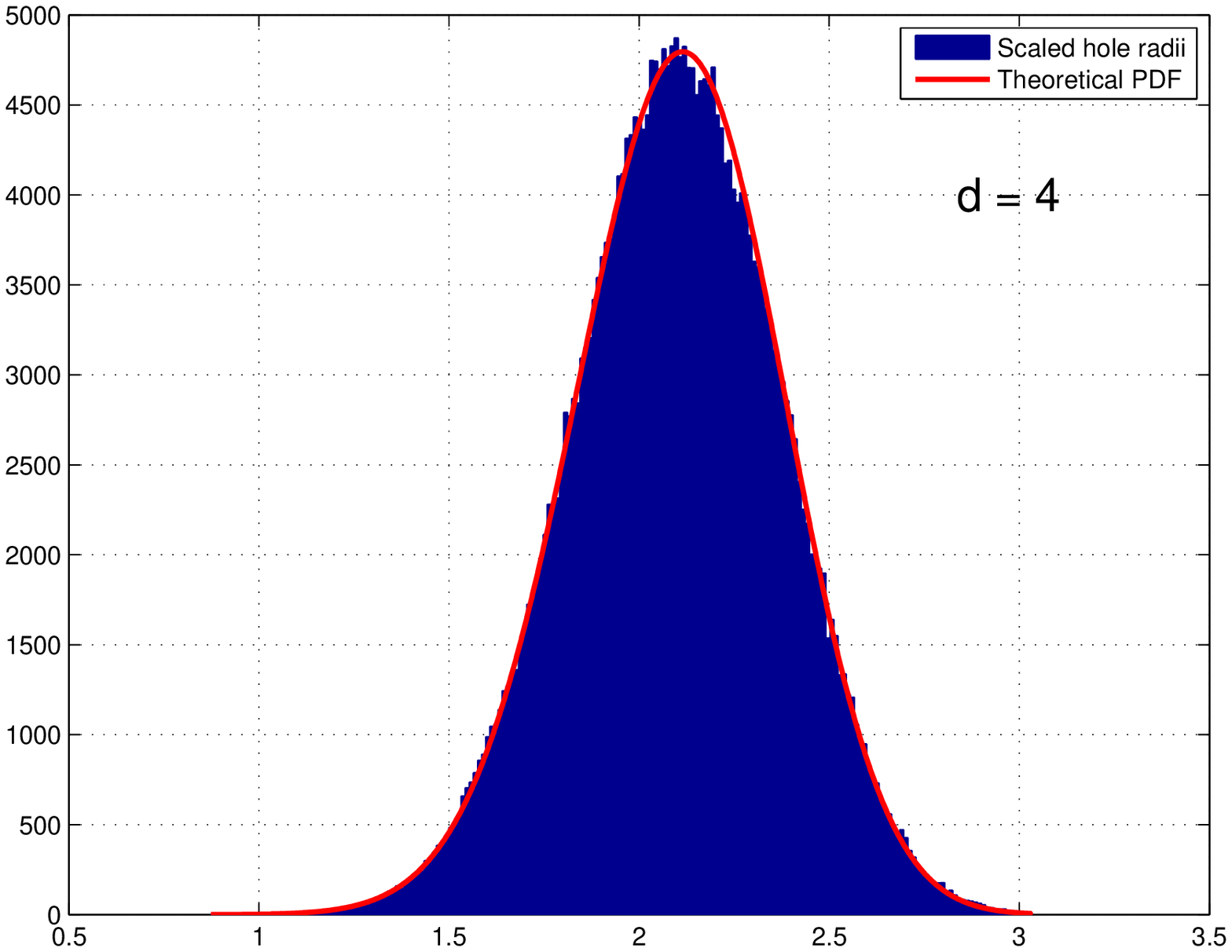} \quad
\caption{Histograms of the scaled Euclidean hole radii $N^{1/d} \rho_k$, $k = 1,\ldots,f_d$
for a sample of $N = 10,000$ uniformly distributed points on $\Sph{d}$
for $d = 1, 2, 3, 4$, and the conjectured limiting PDF \eqref{eq:hole.radius.PDF}}
\label{fig:hole.radii.empirical}
\end{center}
\end{figure}
Figure~\ref{fig:hole.radi.limit.distribution} illustrates the conjectured limiting PDF \eqref{eq:hole.radius.PDF}, showing how the distribution changes with varying dimension $d$.
\begin{figure}[htbp]
\begin{center}
\includegraphics[scale=0.70]{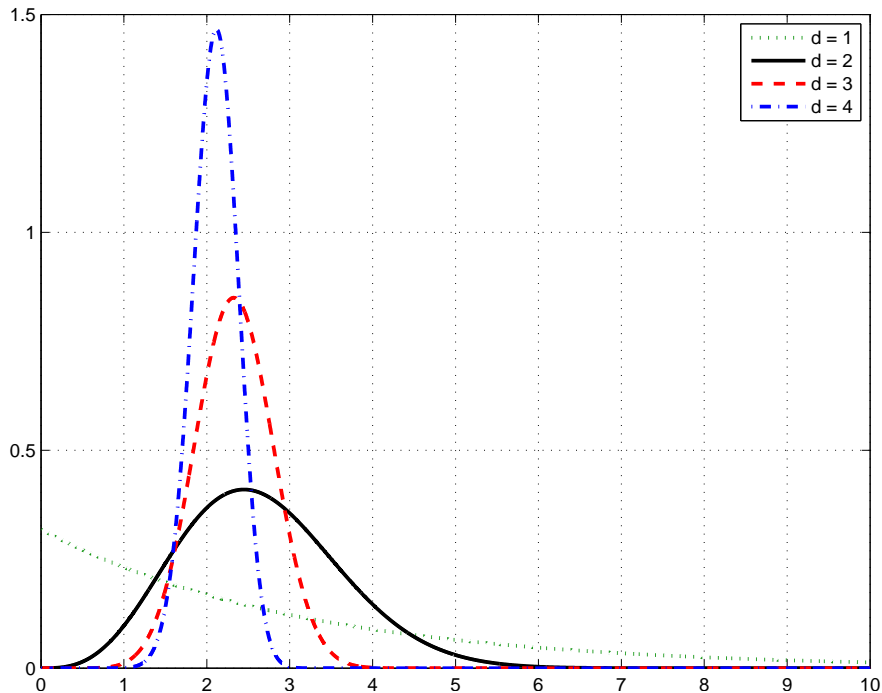} \quad
\caption{\label{fig:hole.radi.limit.distribution} Conjectured limiting distribution \eqref{eq:hole.radius.PDF} on $[0, 10]$ for dimensions $d = 1, 2, 3, 4$}
\end{center}
\end{figure}
The CDF corresponding to \eqref{eq:hole.radius.PDF} can be expressed in terms of the regularized incomplete gamma function
\begin{equation*}
\gammafcnregularizedP( a, x ) \DEF \frac{\gamma(a, x)}{\gammafcn( a )}, \qquad \gamma( a, x ) \DEF \int_0^x e^{-t} t^{a-1} \dd t
\end{equation*}
by means of
\begin{equation*}
\int_0^x \frac{d}{\gammafcn(d)} \left( \kappa_d \right)^d  e^{- \kappa_d \, t^d} \, t^{d^2-1} \dd t = \frac{1}{\gammafcn(d)} \int_0^{ \kappa_d \, x^d} e^{-u} \, u^{d-1} \dd u = \gammafcnregularizedP\Big( d, \kappa_d \, x^d \Big).
\end{equation*}
For $d = 2$ the PDF in \eqref{eq:hole.radius.PDF} reduces to the \emph{Nakagami distribution} (see, e.g., \cite{KoJiJa2004})
\begin{equation*}
\frac{2 \nu^\nu}{\gammafcn( \nu ) \, \Omega^\nu} \, x^{2\nu - 1} \, e^{- \frac{\nu}{\Omega} \, x^2}, \qquad x \geq 0,
\end{equation*}
with shape parameter $\nu = 2$ and spread $\Omega = 8$.

\subsection*{The covering radius of i.i.d. uniformly distributed points on $\Sph{d}$}

By Conjecture~\ref{conj:1}, the unordered scaled hole radii
of $N$ i.i.d. uniformly chosen random points on $\Sph{d}$ are
identically (but not independently) distributed with respect to a PDF
tending to \eqref{eq:hole.radius.PDF}.
We now order the $f_d$ hole radii such that $\MARKED{red}{\rho = }\rho_1 \geq \rho_2 \geq \cdots \geq
\rho_{f_d}$ and ask about the distribution properties of the $k$th largest
hole radius. Of particular interest is the largest hole radius~$\rho_1$
which gives the (Euclidean) covering radius of the random point
configuration.
%
%
\begin{conj} \label{conj:2}
For $d \geq 2$ and $\COVconstd = ( \kappa_d )^{-1/d}$,
the Euclidean covering radius of $N$ i.i.d. uniformly distributed points on $\Sph{d}$ satisfies
\begin{equation} \label{eq:conj.E.rho}
  \mathbb{E}[ \rho(\PTCfg_N) ] \sim \COVconstd
  \left( \frac{\log N}{N} \right)^{1/d} \qquad \text{as $N \to \infty$.}
\end{equation}
\end{conj}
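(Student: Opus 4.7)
The plan is to establish matching probabilistic upper and lower bounds for $\rho(\PTCfg_N)$ at the critical scale $(\log N/N)^{1/d}$ and then upgrade to convergence in expectation by a tail estimate. Let $r_N^\pm \DEF (1\pm\eps)\COVconstd(\log N/N)^{1/d}$ and $N_{>r} \DEF \#\{k : \rho_k > r\}$. Crucial to both bounds is the Efron-type integral representation
\begin{equation*}
\mathbb{E}[N_{>r}] = \frac{2}{\betafcn(d^2/2,1/2)}\binom{N}{d+1}\int_r^2[1-\sigma_d(C_\rho)]^{N-d-1}\rho^{d^2-1}(1-\rho^2/4)^{d^2/2-1}\,\dd\rho,
\end{equation*}
which arises from the same geometric considerations underlying the proof of Theorem~\ref{thm:sums.of.powers.of.holes.radii}.

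For the upper bound, I would apply Markov's inequality, $\mathbb{P}(\rho>r_N^+)\leq \mathbb{E}[N_{>r_N^+}]$. Using $\sigma_d(C_\rho)=\kappa_d\rho^d(1+O(\rho^2))$ from \eqref{eq:sigma_d.C.rho} together with the crude bound $[1-\sigma_d(C_\rho)]^{N-d-1}\leq \exp(-\kappa_d\rho^d N(1+o(1)))$ in the relevant range, Laplace's method at the endpoint $\rho=r_N^+$ would yield $\mathbb{E}[N_{>r_N^+}]\asymp N^{1-(1+\eps)^d}(\log N)^{d-1}$, which tends to $0$ for every $\eps>0$, hence $\mathbb{P}(\rho>r_N^+)\to 0$.

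For the lower bound, I would invoke Maehara's coverage theorem~\cite{Ma1988}: with probability tending to one the $N$ open geodesic caps of radius $r_N^-$ centered at $\PT{X}_1,\ldots,\PT{X}_N$ fail to cover $\Sph{d}$. Any such failure produces a point $\PT{y}\in\Sph{d}$ at geodesic distance $\geq r_N^-$ from every $\PT{X}_j$; the empty cap centered at $\PT{y}$, expanded maximally until its boundary meets $d+1$ sample points, is a facet hole of the convex hull, so $\rho(\PTCfg_N)\geq 2\sin(r_N^-/2)\sim r_N^-$. Should the sharp constant $\COVconstd$ elude a direct application of Maehara's formulation, an alternative is a second-moment argument on $N_{>r_N^-}$: the first-moment estimate gives $\mathbb{E}[N_{>r_N^-}]\to\infty$, and a two-facet extension of Efron's identity (integrating over configurations of $2(d+1)$ points with two distinguished facet caps) would bound $\mathrm{Var}(N_{>r_N^-})$, whereupon Paley--Zygmund yields $\mathbb{P}(N_{>r_N^-}\geq 1)\to 1$.

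To convert convergence in probability into the expectation asymptotics \eqref{eq:conj.E.rho}, the same integral estimate at $r=K\COVconstd(\log N/N)^{1/d}$ gives $\mathbb{P}(\rho>r)\lesssim N^{1-K^d}(\log N)^{d-1}$ for each fixed $K>1$, which combined with the deterministic bound $\rho(\PTCfg_N)\leq 2$ secures uniform integrability of $\rho(\PTCfg_N)/(\log N/N)^{1/d}$. The hard part is pinning the lower-bound constant to exactly $\COVconstd=\kappa_d^{-1/d}$: this requires a sharp form of coverage theory at the critical threshold, and the second-moment alternative demands nontrivial control on how strongly the events ``two facet caps are both large'' couple when the facets lie close together on $\Sph{d}$, a delicate geometric correlation estimate that seems to require a careful stratification of pair configurations by their inter-facet distance.
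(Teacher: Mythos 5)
You should first be clear that the statement you set out to prove is stated in the paper as Conjecture~\ref{conj:2}; the paper itself offers no proof, only an explicitly heuristic ``justification'': it takes the $p$th root of the normalized moment asymptotics \eqref{eq:crucial.asymptotics} coming from Theorem~\ref{thm:sums.of.powers.of.holes.radii}, lets $p$ grow like $\eps\, d \log N$ under the unproven assumption that those asymptotics remain the leading term in that regime, and then appeals to Maehara~\cite{Ma1988} to fix the constant $\COVconstd = (\kappa_d)^{-1/d}$. Your route is genuinely different and closer to an actual proof strategy: a first-moment (Markov) upper bound for $\prob\big(\rho(\PTCfg_N) > r_N^+\big)$ from the same Buchta--M\"uller--Tichy/Efron-type integral that underlies the proof of Theorem~\ref{thm:sums.of.powers.of.holes.radii}, a coverage-theoretic (or second-moment) lower bound at $r_N^-$, and uniform integrability via the tail bound together with $\rho(\PTCfg_N)\le 2$ to pass from convergence in probability to \eqref{eq:conj.E.rho}. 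The upper-bound computation $\mathbb{E}[N_{>r_N^+}]\asymp N^{1-(1+\eps)^d}(\log N)^{d-1}$ is right in outline (you would still need a non-asymptotic form of the integral identity, i.e.\ explicit control of the exponentially small corrections from the $\lambda^{N-d-1}$ term and from configurations whose convex hull misses the origin, and note the upper limit of integration should be $\sqrt{2}$ in that regime), and the uniform-integrability step does close once the tail bound holds for some fixed $K$ with $K^d > 1 + 1/d$.

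The genuine gap is the one you flag yourself: the lower bound at the exact constant. To conclude $\prob\big(\rho(\PTCfg_N) > (1-\eps)\,\COVconstd (\log N/N)^{1/d}\big)\to 1$ for every $\eps>0$ and every $d\ge 2$ you need a \emph{sharp-threshold} coverage result at precisely the scale $(\kappa_d)^{-1/d}(\log N/N)^{1/d}$; the paper's description of \cite{Ma1988} (``larger/smaller by a constant factor'') is itself part of a heuristic, and whether the cited form delivers the exact constant for all $d\ge 2$ is exactly what must be verified or imported from sharp coverage theory---it is not supplied in your sketch. The second-moment alternative is likewise only a program: the two-facet Efron identity and the stratified correlation estimate for nearby large facet caps are the technical heart of such an argument, and without a variance bound Paley--Zygmund yields nothing. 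So your proposal is a sound and, in fact, the natural program (essentially the route pursued later by Reznikov and Saff~\cite{RezS2015online}), but as written it reduces \eqref{eq:conj.E.rho} to the sharp coverage threshold (or the variance estimate) rather than proving it---which is precisely the part that makes the statement a conjecture in this paper.
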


\begin{rmk}
Inspired by this conjecture, Reznikov and \MARKED{red}{Saff~\cite{RezS2015online}} recently investigated this problem and proved results for more general manifolds. Note that Theorem~\ref{thm:sums.of.powers.of.holes.radii} implies that
\begin{equation*}
\frac{1}{\mathbb{E}[f_d]} \, \mathbb{E}\left[ \sum_{k=1}^{f_d} \rho_k \right] \sim \Econstdone \, N^{-1/d} \left\{ 1 + \mathcal{O}( N^{-2/d} ) \right\} \qquad \text{as $N \to \infty$},
\end{equation*}
and thus the convergence rate of this ``mean value'' of the hole radii is smaller by a factor $( \log N )^{1/d}$ than the conjectured rate for the expected value of the maximum of the hole radii in \eqref{eq:conj.E.rho}.
\end{rmk}

\begin{proof}[Justification of Conjecture~\ref{conj:2}]
Dividing the asymptotics in equation \eqref{eq:main.E.sum.relation} of Section~\ref{sec:proofs} through by the asymptotics for $\mathbb{E}[f_d]$ (given by \eqref{eq:main.E.sum.relation} for $p=0$) and taking the $p$th root, we arrive at
\begin{equation} \label{eq:crucial.asymptotics}
\left( \frac{1}{\mathbb{E}[f_d]} \, \mathbb{E}\left[ \sum_{k=1}^{f_d} (\rho_k)^p \right] \right)^{1/p} \sim
\left( \kappa_d \right)^{-1/d} \left[ \frac{\gammafcn( d + p/d ) \gammafcn( N )}{\gammafcn( N + p/d )} \right]^{1/p} \left\{ 1 + \mathcal{R}(N,p) \right\}^{1/p}
\end{equation}
as $N \to \infty$. Not shown here is a remainder term that goes exponentially fast to $0$ for fixed (or sufficiently weakly growing) $p$.
A qualitative analysis of the right-hand side of~\eqref{eq:crucial.asymptotics} for $p$ growing like $p = \eps \, d \, \log N$ for some $\eps > 0$ as $N \to \infty$ provides the basis for our conjecture. It is assumed that $p$ grows sufficiently slowly so that the right-hand side of \eqref{eq:crucial.asymptotics} provides the leading term of the asymptotics.
From the proof of Theorem~\ref{thm:sums.of.powers.of.holes.radii} we see that $\mathcal{R}(N,p) \leq C \, N^{-2\xi / d}$ for some $\xi \in (0,1)$.
Thus, the expression $\{ 1 + \mathcal{R}(N,p) \}^{1/p}$ goes to $1$ as $N \to \infty$. The well-known asymptotic expansion of the log-gamma function $\log \gammafcn( x )$ for large $x$ yields 
\begin{equation*}
\begin{split}
\left[ \frac{\gammafcn( d + p/d ) \gammafcn( N )}{\gammafcn( N + p/d )} \right]^{1/p}
= \exp\Bigg[ &\frac{1}{d} \log \log N - \frac{1}{d} \log N + \frac{1}{d} \log \eps \\
&\phantom{=}- \frac{N-1/2}{N} \, \frac{\log\big( 1 + \eps (\log N) / N \big)}{d \, \eps (\log N) / N } + \mathcal{O}\big( \frac{\log \log N}{\log N} \big) \Bigg]
\end{split}
\end{equation*}
as $N \to \infty$. Hence,
\begin{equation*}
\left( \frac{1}{\mathbb{E}[f_d]} \, \mathbb{E}\left[ \sum_{k=1}^{f_d} (\rho_k)^p \right] \right)^{1/p} \sim \left( \kappa_d \right)^{-1/d} \left( \frac{\log N}{N} \right)^{1/d} e^{\frac{1}{d} \log \eps - \frac{1}{d} + \mathcal{O}( \frac{\log \log N}{\log N} )} + \mathcal{R}(N)
\end{equation*}
as $N \to \infty$.
Maehara's results in \cite{Ma1988} then give that $\COVconstd = ( \kappa_d )^{-1/d}$.
\end{proof}

We remark that further numerical experiments support Conjecture~\ref{conj:2}. They also show that the convergence is very slow when the dimension $d$ gets larger.

\section{Separation of Random Points}
\label{sec:separation.random.points}

Let $\PT{X}_1, \dots, \PT{X}_N$ form a collection of $N\geq 2$ independent uniformly distributed random points on $\Sph{d}$.
These $N$ random points determine $\binom{N}{2}$ random angles $\Theta_{j,k} \in [0, \pi]$ via $\cos \Theta_{j,k} = \PT{X}_j \cdot \PT{X}_k$ ($1 \leq j < k \leq N$). In the case of fixed dimension $d$ \footnote{The case when both $d$ and $N$ grow is also discussed in \cite{CaFaJi2013}.}, the global behavior of these pairwise angles is captured by their empirical distribution
\begin{equation*}
\mu_N \DEF \mu[\PT{X}_1, \dots, \PT{X}_N] \DEF \frac{1}{\binom{N}{2}} \mathop{\sum_{j=1}^N \sum_{k=1}^N}_{j < k} \delta_{\Theta_{j,k}}, \qquad N \geq 2,
\end{equation*}
where $\delta_\theta$ is the Dirac-delta measure with unit mass at $\theta \in [0, \pi]$. The empirical law of such angles among a large number of random points on $\Sph{d}$ is known. 

\MARKED{red}{\begin{thm}[{\cite[Theorem~1]{CaFaJi2013}}] \label{thm:CaFaJi.1}
With probability one, the empirical distribution of random angles $\mu_N$ converges weakly (as $N \to \infty$) to the limiting probability density function
\begin{equation*}
h( \theta ) \DEF \frac{\omega_{d-1}}{\omega_d} \left( \sin \theta \right)^{d-1}, \qquad \theta \in [0, \pi].
\end{equation*}
\end{thm}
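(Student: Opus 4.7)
The plan is to reduce the claim to a strong law of large numbers for U-statistics of order~$2$ applied to a carefully chosen symmetric kernel, and then to upgrade almost-sure convergence from a fixed bounded continuous test function to \emph{every} bounded continuous test function by means of a countable dense subfamily of $C([0,\pi])$. Since $[0,\pi]$ is compact, weak convergence of $\mu_N$ to $h(\theta)\,\dd\theta$ is equivalent to $\int f \, \dd \mu_N \to \int f(\theta) \, h(\theta) \, \dd \theta$ for every $f \in C([0,\pi])$, so this two-layer strategy suffices.

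First, I would identify the marginal law of a single angle $\Theta_{j,k}$. By the rotational invariance of $\sigma_d$, one may condition on $\PT{X}_j$ and rotate it to the north pole; then $\PT{X}_k$ remains uniform on $\Sph{d}$ and $\cos \Theta_{j,k}$ is its height coordinate. Slicing $\Sph{d}$ into parallels of polar angle $\theta$ and computing the surface element in standard spherical coordinates gives that $\Theta_{j,k}$ has density $h(\theta) = (\omega_{d-1}/\omega_d)(\sin\theta)^{d-1}$ on $[0,\pi]$. Consequently, for any $f \in C([0,\pi])$ the symmetric bounded kernel $g(\PT{x},\PT{y}) \DEF f(\arccos(\PT{x} \cdot \PT{y}))$ satisfies $\mathbb{E}[g(\PT{X}_1,\PT{X}_2)] = \int_0^\pi f(\theta)\,h(\theta)\,\dd\theta$.

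Second, I would recognize the integral against $\mu_N$ as a U-statistic,
\begin{equation*}
\int_0^\pi f \, \dd \mu_N = \binom{N}{2}^{-1} \sum_{1 \leq j < k \leq N} g(\PT{X}_j, \PT{X}_k),
\end{equation*}
and invoke Hoeffding's strong law of large numbers for U-statistics, which asserts that such averages of bounded symmetric kernels over i.i.d.\ samples converge almost surely to the mean of the kernel. This yields $\int f \, \dd \mu_N \to \int_0^\pi f(\theta)\,h(\theta)\,\dd\theta$ almost surely for the single function~$f$.

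Finally, to obtain weak convergence for all test functions simultaneously, choose a countable uniformly dense subfamily $\{ f_m \}_{m \geq 1} \subset C([0,\pi])$ (which exists by separability of $C([0,\pi])$ in the uniform norm, since $[0,\pi]$ is compact). Intersecting the countably many full-probability events from the previous step produces a single event $\Omega^\ast$ of probability one on which $\int f_m \, \dd \mu_N \to \int f_m \, h \, \dd \theta$ for every~$m$; a routine three-epsilon estimate based on the inequality $\lvert \int (f - f_m) \, \dd \mu_N \rvert \leq \| f - f_m \|_\infty$ (and the analogous bound for the limit measure) then extends the convergence to all $f \in C([0,\pi])$ on the same $\Omega^\ast$. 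The one nontrivial ingredient, and thus the main obstacle, is the U-statistic strong law itself: a clean proof proceeds via the reverse-martingale structure of the U-statistic sequence, or equivalently via Hoeffding's orthogonal decomposition together with a variance bound of order $1/N$ for the degenerate remainder combined with Borel--Cantelli along a subsequence; everything else in the argument is bookkeeping.
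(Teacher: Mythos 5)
Your proof is correct, but note that there is nothing in the paper to compare it against: the statement is quoted verbatim as Theorem~1 of Cai, Fan and Jiang \cite{CaFaJi2013}, and the paper supplies no proof of its own. Your route --- computing the marginal density $h(\theta)$ of a single angle by rotational invariance (condition on $\PT{X}_j$, rotate it to the north pole, and read off the polar-angle density $\tfrac{\omega_{d-1}}{\omega_d}(\sin\theta)^{d-1}$, exactly the decomposition of $\sigma_d$ recalled in the paper's remark), recognizing $\int f\,\dd\mu_N$ as a bounded symmetric U-statistic of order two, invoking Hoeffding's strong law for U-statistics (via the reverse-martingale structure or the Hoeffding decomposition with a Borel--Cantelli argument), and then upgrading from a countable uniformly dense subfamily of $C([0,\pi])$ to all test functions using $\bigl\lvert\int (f-f_m)\,\dd\mu_N\bigr\rvert\le\|f-f_m\|_\infty$ --- is complete and is essentially the standard argument by which this almost-sure weak convergence is established in the cited source. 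A minor alternative worth knowing: since the limit law is deterministic with continuous CDF on the compact interval $[0,\pi]$, one can instead prove almost-sure convergence of the empirical CDF at a countable dense set of points (each value being a bounded U-statistic with an indicator kernel) and conclude by monotonicity, Glivenko--Cantelli style; this avoids the separability argument for $C([0,\pi])$ but buys nothing essential. No gaps to repair.
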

}

\begin{rmk}
All angles $\Theta_{j,k}$ ($1 \leq j < k \leq N$) are identically distributed with respect to the probability density function $h( \theta )$ (\cite[Lemma~6.2(i)]{CaFaJi2013}) but are not independent (some are large and some are small).
\end{rmk}

\begin{rmk}
The PDF $h( \theta )$ appears in the following decomposition of the normalized surface area measure $\sigma_d$ on $\Sph{d}$ (cf. \cite{Mu1966})
\begin{equation*}
  \dd \sigma_d( \PT{x} ) =
  \frac{\omega_{d-1}}{\omega_d} \left( \sin \theta \right)^{d-1} \dd \theta \, \dd \sigma_{d-1}( \overline{\PT{x}} ), \qquad
  \PT{x} = ( \sin \theta \, \overline{\PT{x}}, \cos \theta ), \, \theta \in [0, \pi ], \overline{\PT{x}} \in \Sph{d-1}.
\end{equation*}
Thus, the cumulative distribution function associated with $h( \theta )$,
\begin{equation*}
H( \theta ) = \int_0^\theta h( \theta^\prime ) \dd \theta^\prime = \frac{\omega_{d-1}}{\omega_d} \int_0^\theta \int_{\Sph{d-1}} \left( \sin \theta \right)^{d-1} \dd \theta \, \dd \sigma_{d-1}( \overline{\PT{x}} ) = \sigma_d( \{ \PT{x} \in \Sph{d} : \PT{x} \cdot \PT{p} \geq \cos \theta \} ),
\end{equation*}
measures the (normalized) surface area of a spherical cap with arbitrary center $\PT{p}\in\Sph{d}$ and geodesic radius $\theta \in [0, \pi]$.
\end{rmk}

On a high-dimensional sphere most of the angles are concentrated around $\pi / 2$. The following concentration result provides a precise characterization of the folklore that ``all high-dimensional unit random vectors are almost always nearly orthogonal to each other''.

\begin{prop}[{\cite[Proposition~1]{CaFaJi2013}}]
Let $\Theta$ be the angle between two independent uniformly distributed random points $\PT{X}$ and $\PT{Y}$ on $\Sph{d}$ (i.e., $\cos \Theta = \PT{X} \cdot \PT{Y}$). Then
\begin{equation*}
\prob\Big( \left| \Theta - \frac{\pi}{2} \right| \geq \eps \Big) \leq C^\prime \, \sqrt{d} \left( \cos \eps \right)^{d-1}
\end{equation*}
for all $d \geq 1$ and $\eps \in ( 0, \pi / 2 )$, where $C^\prime$ is a universal constant.
\end{prop}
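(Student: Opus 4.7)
The plan is to proceed directly from the marginal density of $\Theta$ identified above: since $\PT{X}$ and $\PT{Y}$ are independent uniform on $\Sph{d}$, the angle $\Theta$ has PDF $h(\theta) = (\omega_{d-1}/\omega_d)(\sin\theta)^{d-1}$ on $[0,\pi]$, as recorded in the remark immediately following Theorem~\ref{thm:CaFaJi.1}. Exploiting the symmetry $\sin(\pi - \theta) = \sin\theta$ of this density about $\pi/2$, the probability in question becomes twice a single-tail integral, and the substitution $u = \pi/2 - \theta$ converts $\sin\theta$ into $\cos u$:
\begin{equation*}
\prob\Big(\big|\Theta - \tfrac{\pi}{2}\big| \geq \eps\Big) = 2\,\frac{\omega_{d-1}}{\omega_d} \int_0^{\pi/2 - \eps} (\sin\theta)^{d-1}\,\dd\theta = 2\,\frac{\omega_{d-1}}{\omega_d} \int_\eps^{\pi/2} (\cos u)^{d-1}\,\dd u.
\end{equation*}

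The rest reduces to two crude estimates. First, since $\cos u$ decreases on $[0, \pi/2]$, the pointwise bound $(\cos u)^{d-1} \leq (\cos\eps)^{d-1}$ over $[\eps, \pi/2]$ yields $\int_\eps^{\pi/2} (\cos u)^{d-1}\,\dd u \leq (\pi/2 - \eps)(\cos\eps)^{d-1} \leq (\pi/2)(\cos\eps)^{d-1}$. Second, writing $\omega_{d-1}/\omega_d = \Gamma((d+1)/2)/(\sqrt{\pi}\,\Gamma(d/2))$ and applying Wendel's inequality $\Gamma(x+s)/\Gamma(x) \leq x^s$ (valid for $x > 0$, $s \in [0,1]$) at $x = d/2$, $s = 1/2$, one obtains $\omega_{d-1}/\omega_d \leq \sqrt{d/(2\pi)}$ uniformly for every $d \geq 1$. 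Multiplying the three ingredients yields
\begin{equation*}
\prob\Big(\big|\Theta - \tfrac{\pi}{2}\big| \geq \eps\Big) \leq 2 \cdot \sqrt{\tfrac{d}{2\pi}} \cdot \tfrac{\pi}{2}\,(\cos\eps)^{d-1} = \sqrt{\tfrac{\pi d}{2}}\,(\cos\eps)^{d-1},
\end{equation*}
which is the proposition with $C' = \sqrt{\pi/2}$.

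There is really no obstacle here; the content is essentially one line of calculus. The only point requiring minor care is the uniform gamma-ratio estimate: one should use a finite-$d$ inequality such as Wendel's (or Gautschi's), so that the constant $C'$ is genuinely universal and has no hidden dependence on $d$ being large --- a Stirling asymptotic alone would only yield the bound for $d$ sufficiently large and would require a separate argument for the small-dimensional cases $d = 1, 2$.
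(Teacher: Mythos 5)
Your argument is correct, and it is worth noting that the paper itself offers no proof of this proposition: it is quoted verbatim from Cai--Fan--Jiang \cite[Proposition~1]{CaFaJi2013}, so there is no internal argument to compare against. Your derivation is a complete, self-contained elementary proof: the marginal density $h(\theta)=(\omega_{d-1}/\omega_d)(\sin\theta)^{d-1}$ is exactly the fact recorded in the remark after Theorem~\ref{thm:CaFaJi.1} (there credited to \cite[Lemma~6.2(i)]{CaFaJi2013}), the symmetry reduction and the substitution $u=\pi/2-\theta$ are correct, the monotonicity bound $(\cos u)^{d-1}\le(\cos\eps)^{d-1}$ on $[\eps,\pi/2]$ is valid for all $d\ge 1$, and Wendel's inequality $\Gamma(x+1/2)\le \sqrt{x}\,\Gamma(x)$ at $x=d/2$ gives $\omega_{d-1}/\omega_d\le\sqrt{d/(2\pi)}$ uniformly in $d$, yielding the stated bound with the explicit universal constant $C'=\sqrt{\pi/2}$. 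Your closing remark is also the right point of emphasis: a Stirling-type asymptotic for the gamma ratio would only give the estimate for large $d$, whereas the finite-$d$ Wendel (or Gautschi) bound is what makes $C'$ genuinely dimension-free, covering $d=1,2$ without a separate case analysis. If anything, your bound is slightly sharper in form than what is strictly needed, since you even retain the factor $\pi/2-\eps$ before discarding it; no gaps remain.
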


\begin{rmk}
As the dimension $d$ grows (and $\eps$ remains fixed), the probability decays exponentially. Letting $\eps$ decay like $\sqrt{c \, ( \log d ) / d}$, where $c > 1$, one has that (see \cite{CaFaJi2013} for details)
\begin{equation} \label{eq:folklore.concrete}
\prob\Big( \left| \Theta - \frac{\pi}{2} \right| \geq \sqrt{c \, \frac{\log d}{d}} \Big) \leq C^{\prime\prime} \, d^{-(c-1)/2}
\end{equation}
for sufficiently large $d$, where $C^{\prime\prime}$ depends only on $c$; i.e., \eqref{eq:folklore.concrete} can be interpreted as follows: the angle between two independent uniformly distributed points on a high-dimensional sphere is within $\sqrt{( c \, \log d ) / d}$ of $\pi / 2$ with high probability.
\end{rmk}

The minimum geodesic distance among points of a (random) configuration on the sphere is given by the smallest angle. Let the extreme angle $\Theta_{\min}$ be defined by
\begin{equation*}
\sepDist( \{ \PT{X}_1, \dots, \PT{X}_N \} ) \DEF \Theta_{\min} \DEF \min\{ \Theta_{j,k} : 1 \leq j < k \leq N \}.
\end{equation*}
The \MARKED{red}{'extreme law'} of all pairwise angles $\Theta_{j,k}$ among a large number of random points on~$\Sph{d}$ is as follows.


\MARKED{red}{
\begin{thm}[{cf. \cite[Theorem~2]{CaFaJi2013}}] \label{thm:Extreme.law}
\MARKED{red}{Let $X_N$ be a set of $N$ i.i.d. uniformly chosen points on $\Sph{d}$. Denote
\begin{equation} \label{eq:F.N.extreme.law}
F_N( t ) \DEF \prob\big( N^{2/d} \, \sepDist( X_N ) \leq t \big).
\end{equation}
Then for every $t \in \mathbb{R}$, one has $F_N( t ) \to F( t )$, where 
\begin{equation} \label{eq:limiting.CDF}
F( t ) \DEF
\begin{cases}
1 - e^{- \kappa_d \, t^d / 2} & \text{if $t \geq 0$,} \\
0 & \text{if $t < 0$,}
\end{cases}
\end{equation}
and the constant $\kappa_d$ is given in \eqref{eq:Sdconst}.
}
\end{thm}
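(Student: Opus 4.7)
The plan is a Poisson-approximation argument applied to the indicator variables of ``close pairs''. Set $\theta_N \DEF t \, N^{-2/d}$ and
\[
W_N \DEF \sum_{1 \leq j < k \leq N} \mathbf{1}[ \Theta_{j,k} \leq \theta_N ],
\]
so that $W_N$ counts the pairs at geodesic distance at most $\theta_N$. Since $\{ N^{2/d} \sepDist( X_N ) > t \} = \{ W_N = 0 \}$, it suffices to prove $\Pr( W_N = 0 ) \to e^{-\lambda}$ with $\lambda \DEF \kappa_d \, t^d / 2$; the case $t < 0$ is trivial because $\sepDist( X_N ) \geq 0$. Since each $\Theta_{j,k}$ has density $h$ and $\sin \phi = \phi + \mathcal{O}( \phi^3 )$, Taylor expansion gives
\[
H( \theta_N ) = \frac{\omega_{d-1}}{\omega_d} \int_0^{\theta_N} (\sin\phi)^{d-1} \, \dd \phi = \kappa_d \, \theta_N^d \, \{ 1 + \mathcal{O}( \theta_N^2 ) \} = \frac{\kappa_d \, t^d}{N^2} \, \{ 1 + o(1) \},
\]
so that $\mathbb{E}[ W_N ] = \binom{N}{2} \, H( \theta_N ) \to \lambda$.

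To upgrade this first-moment calculation to $W_N \Rightarrow \mathrm{Poisson}(\lambda)$, I would verify convergence of all factorial moments,
\[
M_k( N ) \DEF \mathbb{E}\bigl[ W_N ( W_N - 1 ) \cdots ( W_N - k + 1 ) \bigr] \longrightarrow \lambda^k, \qquad k = 1, 2, 3, \dots,
\]
which implies convergence in distribution and in particular $\Pr( W_N = 0 ) \to e^{-\lambda}$. Expanding $M_k( N )$ yields a sum over ordered $k$-tuples of \emph{distinct} index-pairs $( p_1, \dots, p_k )$ of the joint probability that $\Theta_{p_\ell} \leq \theta_N$ for all $\ell$; I would group these tuples by the simple graph $G$ they form, with $n$ vertices and $r$ connected components. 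The ``matching'' case $r = k$ (pairs vertex-disjoint, $n = 2 k$) contributes the leading term: by independence of the $\PT{X}_j$'s, each such tuple has probability exactly $H( \theta_N )^k$, and there are $N ( N - 1 ) \cdots ( N - 2 k + 1 ) / 2^k \sim ( N^2 / 2 )^k$ of them, giving a total $\to \lambda^k$.

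The main technical obstacle is to show that every non-matching graph type contributes negligibly. For any connected component on $n_i$ vertices, selecting a spanning tree and integrating vertex by vertex over $\Sph{d}$---each new vertex must lie in a cap of normalized area $H( \theta_N )$ centered at an already-placed vertex---gives the bound $\Pr( \text{all edges of the component are short} ) \leq H( \theta_N )^{n_i - 1}$, hence $\Pr \leq H( \theta_N )^{n - r}$ for the full tuple. Since there are $\mathcal{O}( N^n )$ ordered tuples per graph type, the contribution is $\mathcal{O}( N^n \, H( \theta_N )^{n - r} ) = \mathcal{O}( N^{2 r - n} )$. Any component containing two or more distinct edges requires at least $3$ vertices, so every non-matching graph satisfies $n \geq 2 r + 1$, and the bound becomes $\mathcal{O}( N^{-1} ) = o( 1 )$. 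Combining the diagonal and off-diagonal asymptotics yields $M_k( N ) \to \lambda^k$ for every $k$, whence $F_N( t ) = 1 - \Pr( W_N = 0 ) \to 1 - e^{-\kappa_d t^d / 2} = F( t )$, as claimed. An equivalent route, bypassing factorial moments, is to apply the Chen--Stein method directly to $W_N$, reducing the claim to a total-variation estimate that is controlled by exactly the same kind of pairwise cap-intersection bounds.
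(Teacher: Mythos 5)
Your proposal is correct, but note that the paper itself does not prove this statement at all: Theorem~\ref{thm:Extreme.law} is imported verbatim (``cf.'') from Cai--Fan--Jiang \cite[Theorem~2]{CaFaJi2013}, and the proofs section of the paper only uses it (e.g.\ in Corollary~\ref{cor:limit.random.separation}). So what you have written is a self-contained replacement for the cited result, by the standard Poissonization/method-of-factorial-moments route. Checking the steps: the identification $\{N^{2/d}\sepDist(X_N)>t\}=\{W_N=0\}$ is exact; the marginal estimate $H(\theta_N)=\kappa_d\theta_N^d\{1+\mathcal{O}(\theta_N^2)\}$ follows from the cap-area formula $H(\theta)=\sigma_d(\{\PT{x}:\PT{x}\cdot\PT{p}\ge\cos\theta\})$ recorded in the paper's remark after Theorem~\ref{thm:CaFaJi.1}, so $\mathbb{E}[W_N]\to\kappa_d t^d/2$; for vertex-disjoint pairs the angle indicators are genuinely independent, giving the matching contribution $\frac{N!}{(N-2k)!\,2^k}H(\theta_N)^k\to\lambda^k$; and your spanning-forest bound $\prob(\text{all edges short})\le H(\theta_N)^{n-r}$ is valid (condition vertex by vertex; each child must fall in a cap of normalized area exactly $H(\theta_N)$, by rotation invariance), so each non-matching graph type contributes $\mathcal{O}(N^{2r-n})=\mathcal{O}(N^{-1})$ since a non-matching configuration forces some component to have at least three vertices, i.e.\ $n\ge 2r+1$. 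Since the number of graph types with $k$ edges is finite, $M_k(N)\to\lambda^k$ for every $k$, and because the Poisson distribution is determined by its (factorial) moments and $\{0\}$ is a continuity set for integer-valued limits, $\prob(W_N=0)\to e^{-\lambda}$, which is the claimed limit law. The only points worth making explicit in a final write-up are (i) the factorial-moment identity $\mathbb{E}[(W_N)_k]=\sum_{\text{distinct }(p_1,\dots,p_k)}\prob(\cap_\ell\{\Theta_{p_\ell}\le\theta_N\})$, and (ii) the moment-determinacy step (or, as you say, replace the whole moment computation by Chen--Stein, which in addition yields a total-variation rate). Compared with simply citing \cite{CaFaJi2013}, your argument buys a short, elementary, and quantitative proof tailored exactly to the scaling $N^{2/d}$ and the constant $\kappa_d/2$ used later in Corollary~\ref{cor:limit.random.separation} and Proposition~\ref{prop:separation.probability}.
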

}

\MARKED{red}{As we shall show, the extreme law for pairwise angles plays an important role in deriving the expected value of the (geodesic) separation distance of random points on $\Sph{d}$.}

\begin{cor} \label{cor:limit.random.separation}
\MARKED{red}{Let $X_N$ be a set of $N$ i.i.d. uniformly chosen points on $\Sph{d}$. Then}
\begin{equation} \label{eq:limit.C.d}
\mathbb{E}[ N^{2/d} \, \MARKED{red}{\sepDist( X_N )} ] \to C_d \DEF \left( \frac{\kappa_d}{2} \right)^{-1/d} \gammafcn( 1 + 1 / d ) \qquad \text{as $N \to \infty$.}
\end{equation}
\MARKED{red}{Furthermore,
\begin{equation*}
\var( N^{2/d} \, \MARKED{red}{\sepDist( X_N )} ) \to \left( \frac{\kappa_d}{2} \right)^{-2/d} \left( \gammafcn( 1 + 2/d ) - [ \gammafcn( 1 + 1/d ) ]^2 \right) \qquad \text{as $N \to \infty$.}
%
\end{equation*}}
\end{cor}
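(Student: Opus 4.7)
\emph{Strategy.} The limiting distribution function $F(t) = 1 - \exp(-\kappa_d t^d / 2)$ (for $t \geq 0$) is that of a Weibull random variable $Y$ with shape $d$ and scale $(\kappa_d/2)^{-1/d}$. A substitution $u = \kappa_d t^d / 2$ in
\begin{equation*}
\mathbb{E}[Y^k] = \int_0^\infty k\, t^{k-1} \, e^{-\kappa_d t^d / 2} \dd t
\end{equation*}
yields $\mathbb{E}[Y^k] = (\kappa_d/2)^{-k/d}\, \gammafcn(1 + k/d)$. Setting $k = 1$ reproduces the target mean $C_d$, while combining $k = 2$ with $k = 1$ through $\var(Y) = \mathbb{E}[Y^2] - (\mathbb{E}[Y])^2$ gives the stated limiting variance. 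Thus the task is to upgrade the convergence in distribution $Y_N \DEF N^{2/d} \sepDist(X_N) \Rightarrow Y$ furnished by Theorem~\ref{thm:Extreme.law} to convergence of the first and second moments of $Y_N$.

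\emph{Plan.} Convergence in distribution does not by itself imply convergence of moments, but by Vitali's convergence theorem it does once uniform integrability of $\{Y_N\}$ and of $\{Y_N^2\}$ is established. Both are implied by a uniform bound $\sup_N \mathbb{E}[Y_N^{2+\delta}] < \infty$ for some $\delta > 0$. Writing $\mathbb{E}[Y_N^p] = \int_0^\infty p\, t^{p-1}\, (1 - F_N(t)) \dd t$, this reduces to a uniform-in-$N$ exponential tail estimate of the form
\begin{equation*}
1 - F_N(t) = \prob\!\big(\sepDist(X_N) > t\, N^{-2/d}\big) \leq K\, e^{-c\, t^d} \qquad \text{for all $t \geq T_0$,}
\end{equation*}
with constants $K, c, T_0 > 0$ independent of $N$; the trivial bound $1 - F_N(t) \leq 1$ handles $t \in [0, T_0]$. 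Given such a bound, dominated convergence applied to the integrals for $\mathbb{E}[Y_N^k]$, $k = 1, 2$, together with the pointwise limit from Theorem~\ref{thm:Extreme.law}, yields $\mathbb{E}[Y_N^k] \to \mathbb{E}[Y^k]$ and hence both assertions of the corollary.

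\emph{Main obstacle.} Establishing the uniform tail bound is the crux. Let $M_N(s)$ denote the number of pairs $(j, k)$ with $j < k$ and $\Theta_{j,k} \leq s$; then $\prob(\sepDist(X_N) > s) = \prob(M_N(s) = 0)$ and $\mathbb{E}[M_N(s)] = \binom{N}{2} H(s) \sim \binom{N}{2}\, \kappa_d\, s^d$ as $s \to 0$. Since indicator events $\{\Theta_{j,k} \leq s\}$ and $\{\Theta_{j',k'} \leq s\}$ are independent whenever the underlying index pairs are disjoint, the Chen--Stein method for Poisson approximation yields $|\prob(M_N(s) = 0) - e^{-\mathbb{E}[M_N(s)]}| = \mathcal{O}(N\, s^{2d})$. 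Under the scaling $s = t\, N^{-2/d}$ this produces the required exponential tail on the range of $t$ relevant for moment integration, while for $t \gtrsim N^{2/d}$ the probability is identically zero because $\sepDist \leq \pi$. An alternative route extracts the same quantitative estimate directly from the proof of Theorem~\ref{thm:Extreme.law} given in \cite{CaFaJi2013}.
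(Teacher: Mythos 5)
Your overall architecture coincides with the paper's: compute the moments of the limiting Weibull law (this is exactly Lemma~\ref{lem:aux.expected}), and upgrade the distributional convergence of Theorem~\ref{thm:Extreme.law} to convergence of the first two moments via a uniform-in-$N$ integrable tail bound for $1-F_N$ plus dominated convergence. The gap is precisely at the step you flag as the crux: the Chen--Stein argument you sketch does not deliver the required tail estimate. First, the error in the Poisson approximation is of order $N^3 s^{2d}$, not $N s^{2d}$: the number of pairs of index-pairs sharing a common point is of order $N^3$, and each contributes $H(s)^2\asymp \kappa_d^2 s^{2d}$ to the Chen--Stein quantities $b_1,b_2$. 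More importantly, Chen--Stein gives an \emph{additive} two-sided approximation, $|\prob(M_N(s)=0)-e^{-\lambda}|\le \mathrm{err}$, and under the scaling $s=tN^{-2/d}$ this error (even with the usual $\min\{1,1/\lambda\}$ factor) is of size roughly $t^{d}/N$. In the intermediate range $(\log N)^{1/d}\ll t\ll N^{2/d}$ the main term $e^{-\kappa_d t^d/2}$ is negligible while the error is not, so the resulting bound on $1-F_N(t)$ is not exponentially small there; integrating $t^{p-1}\cdot t^d/N$ up to $t\asymp N^{2/d}$ produces a contribution that \emph{grows} with $N$. Hence uniform integrability of $Y_N^2$ is not established by the proposed route, and the observation that $1-F_N(t)=0$ for $t\gtrsim N^{2/d}$ does not rescue the intermediate range.

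What is needed is a genuine one-sided exponential upper bound valid for all $t$ and $N$. The paper obtains it elementarily: conditioning on the event $\sepDist(X_{N-1}')>t$, the caps of geodesic radius $t/2$ about the first $N-1$ points are disjoint, so the $N$th point avoids all of them with probability at most $1-(N-1)A_d(t/2)\le e^{-(N-1)A_d(t/2)}$; iterating gives $\prob(\sepDist(X_N)>t)\le e^{-\binom{N}{2}A_d(t/2)}$, and with $t\mapsto N^{-2/d}t$ and $A_d(N^{-2/d}t/2)\ge \kappa_d 2^{-2d}N^{-2}t^d$ this yields $1-F_N(t)\le e^{-c_d t^d}$ for all $N\ge2$ and $t\ge0$ --- an honest dominating function for both $\int_0^\infty(1-F_N(t))\,\dd t$ and $\int_0^\infty 2t(1-F_N(t))\,\dd t$. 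If you want to stay within the Poissonization framework, replace the two-sided Chen--Stein estimate by a one-sided correlation inequality such as Janson's inequality (using the generalized form when the overlap term dominates), which does give exponential decay across the whole range; as written, however, your argument for the tail bound would fail, so the moment convergence claims are not yet justified.
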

\MARKED{red}{We remark that the same limit relations hold if $\MARKED{red}{\sepDist( X_N )}$ is replaced by the pairwise minimal Euclidean distance of $X_N$. The first few constants $C_d$ are given in Table~\ref{tbl:const.C.d}.}

\begin{table}[ht]
\caption{\label{tbl:const.C.d} The constant $C_d$ \MARKED{red}{in \eqref{eq:limit.C.d}.}} 
\begin{tabular}{ c | c| c| c| c| c| c}
$d$   & $1$     & $2$            & $3$                                  & $4$                                    & $5$ & $6$ \\
\hline
$C_d$ & $2 \pi$ & $\sqrt{2 \pi}$ & $( 3 \pi )^{1/3} \gammafcn( 4 / 3 )$ & $2 ( 2 / 3 )^{1/4} \gammafcn( 5 / 4 )$ & $( 15 \pi / 4 )^{1/5} \gammafcn( 6 / 5 )$ & $( 2 / 5^{1/6} ) \gammafcn( 7 / 6 )$
\end{tabular}
\end{table}

We denote by $A_d(\eps)$ the $\sigma_d$-measure of a spherical cap on $\Sph{d}$ with geodesic radius $\eps$.

\begin{prop} \label{prop:separation.probability}
Assume that $\binom{N}{2} \, A_d(\eps) \leq 1$. Then the geodesic separation distance of $N$ i.i.d. uniformly chosen points on $\Sph{d}$ satisfies
\begin{equation} \label{eq:separation.probability.A}
\begin{split}
\MARKED{red}{\prod_{k=1}^{N-1} \left( 1 - k \, A_d(\eps/2) \right) \geq} \prob( \MARKED{red}{\sepDist( X_N )} \geq \eps ) 
&\geq \MARKED{red}{\prod_{k=1}^{N-1} \left( 1 - k \, A_d(\eps) \right)} \geq 1 - \kappa_d \, \binom{N}{2} \, \eps^d.
\end{split}
\end{equation}
In particular,
\begin{equation} \label{eq:separation.probability.B}
\prob\big( N^{2/d} \, \MARKED{red}{\sepDist( X_N )} \geq \delta \big) \geq 1 - \frac{\kappa_d}{2} \, \delta^d
\end{equation}
and therefore for $\delta = C_d$ and $C_d$ the constant in Corollary~\ref{cor:limit.random.separation},
\begin{equation} \label{eq:separation.probability.C}
\prob\big( N^{2/d} \, \MARKED{red}{\sepDist( X_N )} \geq C_d \big) \geq 1 - \left[ \gammafcn( 1 + \frac{1}{d} ) \right]^{d}.
\end{equation}
The right-hand side is strictly monotonically increasing for $d \geq 1$ towards $1 - e^{\EulerGamma}$. Moreover,
\begin{equation*}
\left[ \gammafcn( 1 + \frac{1}{d} ) \right]^{d} = e^{-\EulerGamma} \left\{ 1 + \frac{\pi^2}{12} \, \frac{1}{d} + \mathcal{O}\big( \frac{1}{d^2} \big) \right\} \to 0.561459483566885\dots \qquad \text{as $d \to \infty$.}
\end{equation*}
(Here, $\EulerGamma$ is the Euler-Mascheroni constant.)
\end{prop}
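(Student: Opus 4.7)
The plan is to derive the chain \eqref{eq:separation.probability.A} by a \emph{sequential exposure} argument, and then to obtain \eqref{eq:separation.probability.B} and \eqref{eq:separation.probability.C} by direct substitutions. I would let $G_k$ denote the event that $\{\PT{X}_1,\dots,\PT{X}_k\}$ is $\eps$-separated in the geodesic metric, and use the sequential conditioning
\[
\prob\bigl(\sepDist(X_N)\geq\eps\bigr) = \prod_{k=1}^{N-1}\prob(G_{k+1}\mid G_k).
\]
For the lower bound I would apply the union bound pointwise: for \emph{any} configuration $\PT{X}_1,\dots,\PT{X}_k$, the forbidden region for $\PT{X}_{k+1}$ (the union of $k$ geodesic caps of radius $\eps$) has $\sigma_d$-measure at most $k\,A_d(\eps)$, so $\prob(G_{k+1}\mid\PT{X}_1,\dots,\PT{X}_k)\geq 1-k\,A_d(\eps)$ deterministically, and multiplying yields the lower bound $\prod_{k=1}^{N-1}(1-k\,A_d(\eps))$. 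For the upper bound I would use that, on $G_k$, the open caps of geodesic radius $\eps/2$ around $\PT{X}_1,\dots,\PT{X}_k$ are pairwise disjoint, and that $G_{k+1}$ forces $\PT{X}_{k+1}$ to be at geodesic distance at least $\eps > \eps/2$ from each of them, producing $\prob(G_{k+1}\mid G_k,\PT{X}_1,\dots,\PT{X}_k)\leq 1-k\,A_d(\eps/2)$ almost surely on $G_k$; multiplying gives the left-most inequality. The right-most estimate then follows from $A_d(\eps)\leq\kappa_d\eps^d$ (use $\sin\theta\leq\theta$ inside $A_d(\eps) = (\omega_{d-1}/\omega_d)\int_0^{\eps}\sin^{d-1}\theta\,\dd\theta$) combined with $\prod(1-a_k)\geq 1-\sum a_k$ for nonnegative $a_k$ whose sum is at most one --- exactly the standing hypothesis $\binom{N}{2}A_d(\eps)\leq 1$.

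To obtain \eqref{eq:separation.probability.B} I would substitute $\eps = \delta/N^{2/d}$ into the right-most bound of \eqref{eq:separation.probability.A} and use $\binom{N}{2}/N^{2} \leq 1/2$. Plugging $\delta = C_d = (\kappa_d/2)^{-1/d}\gammafcn(1+1/d)$ into \eqref{eq:separation.probability.B} simplifies $\kappa_d\delta^{d}/2$ to $[\gammafcn(1+1/d)]^{d}$, giving \eqref{eq:separation.probability.C}.

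For the asymptotic and monotonicity claims I would set $u = 1/d$ and consider
\[
g(u) \DEF \frac{\log\gammafcn(1+u)}{u}, \qquad u \in (0, 1],
\]
so that $[\gammafcn(1+1/d)]^{d} = \exp\bigl(g(1/d)\bigr)$. The classical expansion $\log\gammafcn(1+u) = -\EulerGamma u + (\pi^{2}/12)u^{2} + O(u^{3})$ as $u\to 0^{+}$ yields $g(u) = -\EulerGamma + (\pi^{2}/12)u + O(u^{2})$, whence $[\gammafcn(1+1/d)]^{d} = e^{-\EulerGamma}\{1+\pi^{2}/(12d)+O(1/d^{2})\}$ with limit $e^{-\EulerGamma}$, matching the stated numerical value $0.561459\dots$. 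For strict monotonicity I would set $h(u) \DEF u\,\psi(1+u) - \log\gammafcn(1+u)$ with $\psi$ the digamma function; then $h(0) = 0$ and $h'(u) = u\,\psi'(1+u) > 0$ on $(0, 1]$ (the trigamma is positive), so $h(u) > 0$ on $(0,1]$ and therefore $g'(u) = h(u)/u^{2} > 0$ there. Thus $g$ is strictly increasing on $(0,1]$, $\exp(g(1/d))$ is strictly decreasing in $d\geq 1$, and $1 - [\gammafcn(1+1/d)]^{d}$ is strictly increasing in $d\geq 1$ with limit $1 - e^{-\EulerGamma}$.

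The main conceptual point, rather than a genuine obstacle, is to make sure each per-step probability bound is \emph{deterministic} --- pointwise in the prior configuration for the lower bound, uniformly on $G_k$ for the upper bound --- so that the tower property produces the claimed product inequality without loss of a factor. Once that is in place, the measure comparisons for caps of radii $\eps$ and $\eps/2$, the hypothesis $\binom{N}{2}A_d(\eps)\leq 1$, and the two elementary inequalities (Bernoulli-type product bound and $\sin\theta\leq\theta$) do all the real work, and the monotonicity claim reduces to the positivity of the trigamma function.
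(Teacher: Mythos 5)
Your argument is correct and is essentially the paper's own proof: the lower bound comes from the same sequential conditioning plus union bound, the upper bound from the same disjointness of the radius-$\eps/2$ caps (which the paper imports from the proof of Corollary~\ref{cor:limit.random.separation}, cf.~\eqref{eq:upper.bound.prob.}), and \eqref{eq:separation.probability.B}--\eqref{eq:separation.probability.C} follow by the same substitutions. Your monotonicity/asymptotics step is only a cosmetically different packaging (positivity of the trigamma via $h(0)=0$, $h^\prime>0$, and the Taylor series of $\log\gammafcn(1+u)$, where the paper instead differentiates $1-[\gammafcn(1+1/x)]^{x}$ directly and cites {\sc Mathematica} for the expansion); note also that the limit is indeed $1-e^{-\EulerGamma}$ as you write, the statement's ``$1-e^{\EulerGamma}$'' being a typo.
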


By the Markov inequality, we have 
\begin{equation*}
\prob\big( N^{2/d} \, \MARKED{red}{\sepDist( X_N )} \geq \delta \big) \leq \frac{\mathbb{E}[ N^{2/d} \, \MARKED{red}{\sepDist( X_N )} ]}{\delta} 
\end{equation*}
and relation \eqref{eq:separation.probability.B} gives that
\begin{equation} \label{eq:expected.normalized.Theta.min.lower.bound}
\mathbb{E}[ N^{2/d} \, \MARKED{red}{\sepDist( X_N )} ] \geq \delta \left( 1 - \frac{\kappa_d}{2} \, \delta^d \right) \qquad \text{subject to $\displaystyle 0 < \delta \leq \left( 2 / \kappa_d \right)^{-1/d}$.}
\end{equation}
Optimization of above lower bound of the expected value yields the following \ estimate that should compared with the one in Corollary~\ref{cor:limit.random.separation}.

\begin{prop} \label{prop:limit.random.separation}
\MARKED{red}{Let $X_N$ be a set of $N$ i.i.d. uniformly chosen points on $\Sph{d}$. Then}
\begin{equation}
\mathbb{E}[ N^{2/d} \, \MARKED{red}{\sepDist( X_N )} ] \geq C_d \, \frac{\left( d + 1 \right)^{-1/d}}{\gammafcn( 2 + 1 / d )} \qquad \text{for every $N \geq 2$,}
\end{equation}
where $C_d$ is the constant in Corollary~\ref{cor:limit.random.separation} and
\begin{equation*}
\frac{\left( d + 1 \right)^{-1/d}}{\gammafcn( 2 + 1 / d )} = 1 - \frac{\log d}{d} - \frac{1-\EulerGamma}{d} + \mathcal{O}\big( \frac{1}{d^2} \big) \qquad \text{as $d \to \infty$.}
\end{equation*}
\end{prop}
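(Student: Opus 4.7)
The plan is to use inequality \eqref{eq:expected.normalized.Theta.min.lower.bound} directly: it already furnishes a one-parameter family of lower bounds $g(\delta) \DEF \delta\bigl(1 - \tfrac{\kappa_d}{2}\delta^d\bigr)$, uniform in $N \ge 2$, valid for every $\delta$ in the admissible range $0 < \delta \le (2/\kappa_d)^{1/d}$. (This range is dictated by the hypothesis $\binom{N}{2} A_d(\eps) \le 1$ in Proposition~\ref{prop:separation.probability} after the substitution $\eps = \delta/N^{2/d}$ and the elementary estimate $A_d(\eps) \le \kappa_d\,\eps^d$ already used in \eqref{eq:separation.probability.A}.) The whole problem therefore reduces to a single-variable optimization: maximize $g(\delta)$ over this interval, since the resulting bound is automatically independent of $N$.

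A single derivative $g'(\delta) = 1 - \tfrac{\kappa_d}{2}(d+1)\delta^d$ locates the unique critical point
\[
\delta^* = \left(\frac{2}{\kappa_d(d+1)}\right)^{1/d},
\]
which lies strictly inside $(0, (2/\kappa_d)^{1/d}]$ for every $d \ge 1$ and is therefore admissible. Plugging back,
\[
g(\delta^*) = \delta^*\left(1 - \frac{1}{d+1}\right) = \left(\frac{2}{\kappa_d}\right)^{1/d}(d+1)^{-1/d}\,\frac{d}{d+1}.
\]
To match the stated form of the claim, I would invoke the gamma recursion $\gammafcn(2+1/d) = \tfrac{d+1}{d}\gammafcn(1+1/d)$ and solve the defining identity $C_d = (2/\kappa_d)^{1/d}\gammafcn(1+1/d)$ for $(2/\kappa_d)^{1/d}$. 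After substitution, the factor $\tfrac{d}{d+1}$ cancels precisely against the ratio $\gammafcn(1+1/d)/\gammafcn(2+1/d)$, leaving
\[
g(\delta^*) = C_d\,\frac{(d+1)^{-1/d}}{\gammafcn(2+1/d)},
\]
which is the stated inequality.

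For the asymptotic, I would expand the two factors separately. From $\log(d+1)/d = (\log d)/d + 1/d^2 + O(1/d^3)$ one obtains $(d+1)^{-1/d} = 1 - (\log d)/d + O((\log d)^2/d^2)$; from $\log\gammafcn(2+x) = (1-\EulerGamma)\,x + O(x^2)$ with $x = 1/d$ one obtains $1/\gammafcn(2+1/d) = 1 - (1-\EulerGamma)/d + O(1/d^2)$. Multiplying the two expansions yields $1 - (\log d)/d - (1-\EulerGamma)/d + O(1/d^2)$, with the $(\log d)^2/d^2$ contribution absorbed into the big-$O$. The main obstacle is really only bookkeeping: confirming that $\delta^*$ satisfies the admissibility constraint (which is precisely where the extra factor $(d+1)^{-1/d}$ comes from and explains why the bound degrades only mildly with $d$), and then correctly applying the gamma-function recursion so that the optimized constant is expressed through $\gammafcn(2+1/d)$ rather than $\gammafcn(1+1/d)$.
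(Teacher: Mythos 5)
Your proposal is correct and follows essentially the same route as the paper: the paper likewise maximizes the lower bound $\delta\bigl(1-\tfrac{\kappa_d}{2}\delta^d\bigr)$ from \eqref{eq:expected.normalized.Theta.min.lower.bound} over $\delta$, finds the same critical point $\delta^*=\bigl(2/(\kappa_d(d+1))\bigr)^{1/d}$, and rewrites the optimal value using $\gammafcn(2+1/d)=\tfrac{d+1}{d}\gammafcn(1+1/d)$ and $C_d=(2/\kappa_d)^{1/d}\gammafcn(1+1/d)$. Your rederivation of the admissible range $0<\delta\le(2/\kappa_d)^{1/d}$ directly from the hypothesis of Proposition~\ref{prop:separation.probability} is the right reading (the exponent $-1/d$ printed in \eqref{eq:expected.normalized.Theta.min.lower.bound} must be a typo, since otherwise $\delta^*$ would not be admissible), so your admissibility check is sound and even slightly more careful than the paper's. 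One caveat on the secondary asymptotic claim: the $(\log d)^2/d^2$ contribution cannot be absorbed into $\mathcal{O}(1/d^2)$; after the terms $1-\frac{\log d}{d}-\frac{1-\EulerGamma}{d}$ the true remainder is of order $(\log d)^2/d^2$ (coming from the quadratic term in expanding $(d+1)^{-1/d}$), so the error should read $\mathcal{O}\bigl((\log d)^2/d^2\bigr)$; the paper's statement shares this imprecision and its proof simply appeals to \textsc{Mathematica}, and the issue does not affect the main inequality, which you prove correctly for all $N\ge 2$.
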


\section{Proofs}
\label{sec:proofs}

\subsection{Proofs of Section~\ref{sec:covering.random.points}}

\begin{proof}[Proof of Proposition~\ref{prop:holes.radii}]
The volume of an upright hyper-pyramid in $\mathbb{R}^{d+1}$ of height $a$ with a base polytope of $d$-dimensional surface measure $A$ has volume $A a / ( d + 1 )$. Hence, the volume $V_N$ of a polytope with $N$ vertices on $\Sph{d}$ that contains the origin in its interior with $f_d$ facets $F_1, \dots, F_{f_d}$, where $F_k$ has surface area $A_{k,N}$ and distance $a_k$ to the origin, is given by the formula
\begin{equation} \label{eq:polytope.volume.formula}
V_N = \frac{1}{d+1} \sum_{k=1}^{f_d} A_{k,N} \, a_k.
\end{equation}
A simple geometric argument yields that the geodesic radius $\alpha_k$ and the Euclidean radius $\rho_k$ of the ``point-free'' spherical cap associated with $F_k$ are related with the center distance $a_k$ by means of $\rho_k = 2 \sin( \alpha_k / 2 ) = \sqrt{2\left( 1 - a_k \right)}$.

Now, let $V_N$ and $A_N$ be the volume and the surface area of the convex hull $K_N$ of the random points $\PT{Y}_1, \dots, \PT{Y}_N$. Further, let $V$ and $A$ denote the volume of the unit ball in $\mathbb{R}^{d+1}$ and the surface area of its boundary $\Sph{d}$, respectively.
We will use the following facts: $( d + 1 ) V = A$, $(d + 1) V_N \leq A_N$, and that $V_N / V \to 1$ and $A_N / A \to 1$ as $N \to \infty$, which are consequences of \eqref{eq:polytope.volume.formula} and the fact that the random polytopes $K_N$ approximate the unit ball as $N \to \infty$. We rewrite \eqref{eq:polytope.volume.formula},
\begin{align*}
1 - \frac{V_N}{V}
&= 1 - \frac{A_N}{\left( d + 1 \right) V} \sum_{k=1}^{f_d} \frac{A_{k,N}}{A_N} \, a_k \\
&= 1 - \sum_{k=1}^{f_d} \frac{A_{k,N}}{A_N} \, a_k - \left( \frac{A_N}{A} - 1 \right) \sum_{k=1}^{f_d} \frac{A_{k,N}}{A_N} \, a_k \\
&= \sum_{k=1}^{f_d} \frac{A_{k,N}}{A_N} \left( 1 - a_k \right) - \left( \frac{A_N}{A} - 1 \right) \frac{\left( d + 1 \right) V_N}{A_N} \\
&= \frac{1}{2} \, \sum_{k=1}^{f_d} \frac{A_{k,N}}{A_N} \, \rho_k^2 - \left( \frac{A_N}{A} - 1 \right) - \left( \frac{A_N}{A} - 1 \right) \left( \frac{\left( d + 1 \right) V_N}{A_N} - 1 \right).
\end{align*}
Hence, we arrive at the expected value for the measure $\mu$ given by $\dd \mu(\PT{x}) = \eta(\PT{x}) \dd \sigma_d(\PT{x})$
\begin{equation}
\begin{split}
\mathbb{E}_{\mu}\left[ \sum_{k=1}^{f_d} \frac{A_{k,N}}{A_N} \, \rho_k^2 \right]
&= 2 \, \mathbb{E}_{\mu}\left[ 1 - \frac{V_N}{V} \right] -
   2 \mathbb{E}_{\mu}\left[ 1 - \frac{A_N}{A} \right] \\
&\phantom{=}+ 2 \, \mathbb{E}_{\mu}\left[ \left( 1 - \frac{A_N}{A} \right) \left( 1 - \frac{(d + 1) V_N}{A_N} \right) \right].
\end{split}
\end{equation}
Observe that the last term is positive. Furthermore, $( d + 1 ) V_N / A_N = ( V_N / V ) \big/ ( A_N / A )$.

From \cite[Theorem~1.1]{BoKaFeHu2013} (also cf. \cite{Mu1990,Rei2002,SchWe2003}), we obtain that 
\begin{equation*}
2 \, \mathbb{E}_{\mu}\left[ 1 - \frac{V_N}{V} \right] \sim \frac{1}{N^{2/d}} \, \frac{\gammafcn( d + 2 + 2/d )}{(d-1)!\left(d+2\right)} \left( \kappa_d \right)^{-2/d} \int_{\Sph{d}} \left[ \eta( \PT{x} ) \right]^{-2/d} \dd \sigma_d( \PT{x} ),
\end{equation*}
and
\begin{equation*}
2 \, \mathbb{E}_{\mu}\left[ 1 - \frac{A_N}{A} \right] \sim \frac{1}{N^{2/d}} \, \frac{d \, \gammafcn(d + 1 + 2/d)}{(d-1)!\left(d+2\right)} \left( \kappa_d \right)^{-2/d} \int_{\Sph{d}} \left[ \eta( \PT{x} ) \right]^{-2/d} \dd \sigma_d( \PT{x} ).
\end{equation*}
The result follows.
\end{proof}

For the proof of Theorem~\ref{thm:sums.of.powers.of.holes.radii} we need the following asymptotic result.

\begin{lem} \label{lem:asymptotics}
Let $\nu \geq 1$, $0 < c \leq 1$, $\mu > 0$, $\beta > 0$, and $\tau > 0$ be such that $c \, \tau^\mu < 1$ and $\nu/\mu \geq 1$. Suppose that $g$ is a continuous function on $[0,\tau]$ which is differentiable on $(0,\tau)$, and satisfies $| g(x) | \leq C \, x^\kappa$ and $| g^\prime( x ) | \leq C \, x^{\kappa-1}$ on $[0,\tau]$ for some constants $C > 0$ and $\kappa > 0$. Then for $N \geq \beta + 2$,
\begin{align}
&\int_0^\tau x^{\nu - 1} \left[ 1 - c \, x^\mu \left\{ 1 + g( x ) \right\} \right]^{N-\beta-1} \dd x \notag \\
&\phantom{equals}= \frac{c^{-\nu/\mu}}{\mu} \frac{\gammafcn( \nu / \mu ) \gammafcn( N - \beta )}{\gammafcn( N - \beta + \nu / \mu)}
\IncompleteBetaRegularized_{c \, \tau^\mu}( \nu / \mu,  N - \beta) +
  \mathcal{R}_{c \, \tau^\mu}( \nu / \mu, N - \beta ) \label{eq:asymptotics.A} \\
&\phantom{equals}\sim \frac{c^{-\nu/\mu}}{\mu} \gammafcn( \nu / \mu ) \, N^{-\nu/ \mu} \left\{ 1 + \mathcal{O}( N^{-\kappa/\mu} ) \right\} \qquad \text{as $N \to \infty$.}
\end{align}
The remainder term $\mathcal{R}_{c \, \tau^\mu}( \nu / \mu, N - \beta )$, given by
\begin{equation*}
\mathcal{R}_{c \, \tau^\mu}( \nu / \mu, N - \beta ) = \frac{c^{-\nu/\mu}}{\mu} \int_0^{c \, \tau^\mu} h(u) \, u^{\nu/\mu - 1} \left( 1 - u \right)^{N - \beta - 1} \dd u,
\end{equation*}
where
\begin{equation*}
h( u ) \DEF \left[ 1 - \frac{u}{1-u} \, g\Big( \left( \frac{u}{c} \right)^{1/\mu} \Big) \right]^{N-\beta-1} - 1,
\end{equation*}
satisfies the estimate
\begin{equation} \label{eq:asymptotics.R}
\left| \mathcal{R}_{c \, \tau^\mu}( \nu / \mu, N - \beta ) \right| \leq \frac{C}{1 - c \, \tau^\mu} \, \frac{1+\mu}{\mu^2} \frac{c^{-(\nu+\kappa)/\mu}}{\left( 1 - C\,
\tau^\kappa \right)^{1+(\nu+\kappa)/\mu}} \, \frac{\gammafcn( 1 + ( \nu + \kappa ) / \mu )}{\left( N - \beta \right)^{( \nu + \kappa ) / \mu }},
\end{equation}
provided $C \, \tau^\kappa < 1$.

Furthermore, if $g \equiv 0$, then $\mathcal{R}_{c \, \tau^\mu}( \nu / \mu, N - \beta )$ vanishes and one has the more precise asymptotic expansion
\begin{equation}
\begin{split} \label{eq:more.precise.asymptotics.B}
&\int_0^\tau x^{\nu - 1} \left( 1 - c \, x^\mu \right)^{N-\beta-1} \dd x \\
&\phantom{equ}\sim \frac{c^{-\nu/\mu}}{\mu} \gammafcn( \nu / \mu ) \, N^{-\nu/ \mu} \left\{ 1 + \frac{1}{2} \, \frac{\nu}{\mu} \left( 2 \beta + 1 - \frac{\nu}{\mu} \right) \frac{1}{N} + \mathcal{O}\left( \frac{1}{N^2} \right) \right\} \quad \text{as $N \to \infty$.}
\end{split}
\end{equation}
\end{lem}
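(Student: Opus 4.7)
\emph{Plan.} The strategy is to reduce the integral to a regularized incomplete Beta function via the substitution $u = c x^\mu$, split off an exact Beta-function contribution plus a remainder controlled by $g$, and then invoke standard gamma-function asymptotics.

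Under $u = c x^\mu$ one has $x^{\nu-1}\,\dd x = (c^{-\nu/\mu}/\mu)\,u^{\nu/\mu-1}\,\dd u$, the upper limit becomes $c\tau^\mu$, and the key algebraic identity
$$1 - c x^\mu\bigl(1 + g(x)\bigr) = (1-u)\left[1 - \frac{u}{1-u}\, g\bigl((u/c)^{1/\mu}\bigr)\right]$$
exhibits the integrand as $(1-u)^{N-\beta-1}$ times a factor that equals $1 + h(u)$ with $h$ as in the statement. Separating the $1$-piece from the $h(u)$-piece and recognizing the former as $\betafcn(\nu/\mu,N-\beta)\,\IncompleteBetaRegularized_{c\tau^\mu}(\nu/\mu,N-\beta)$ produces exactly \eqref{eq:asymptotics.A}. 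For the leading-order $\sim$-statement I will combine Stirling's formula, which gives $\gammafcn(N-\beta)/\gammafcn(N-\beta+\nu/\mu) \sim N^{-\nu/\mu}$, with the geometric-decay bound $1 - \IncompleteBetaRegularized_{c\tau^\mu}(\nu/\mu,N-\beta) = \mathcal{O}\bigl((1-c\tau^\mu)^{N-\beta}\bigr)$, obtained by dominating $\int_{c\tau^\mu}^1 u^{\nu/\mu-1}(1-u)^{N-\beta-1}\,\dd u$ by a constant times $(1-c\tau^\mu)^{N-\beta-1}$.

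For the remainder $\mathcal{R}$, I will use the mean-value inequality $|(1-s)^n - 1| \leq n|s|(1+|s|)^{n-1}$ with $s = \xi(u) := \frac{u}{1-u}\,g((u/c)^{1/\mu})$ and $n = N-\beta-1$. The hypothesis $|g(x)|\leq C x^\kappa$ yields $|\xi(u)| \leq (Cc^{-\kappa/\mu}/(1-c\tau^\mu))\,u^{1+\kappa/\mu}$ on $[0,c\tau^\mu]$, and the assumption $C\tau^\kappa<1$ is used to absorb the factor $(1+|\xi(u)|)^{n-1}$ back into a power of $(1-\tilde u)$ for a suitably rescaled $\tilde u$. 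This produces an upper bound which is a constant multiple of $\int_0^{c\tau^\mu} u^{(\nu+\kappa)/\mu-1}(1-\tilde u)^{N-\beta-2}\,\dd u$, and the same substitution-and-extension argument evaluates this to a constant times $\gammafcn(1+(\nu+\kappa)/\mu)/(N-\beta)^{(\nu+\kappa)/\mu}$; careful tracking of all constants produces exactly \eqref{eq:asymptotics.R}. This bookkeeping of explicit constants---particularly the factors $1/(1-c\tau^\mu)$ and $(1-C\tau^\kappa)^{-1-(\nu+\kappa)/\mu}$ that arise from absorbing the perturbation---is the main technical obstacle.

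For the special case $g\equiv 0$ the remainder is identically zero and the integral equals $(c^{-\nu/\mu}/\mu)\,\betafcn(\nu/\mu,N-\beta)\,\IncompleteBetaRegularized_{c\tau^\mu}(\nu/\mu,N-\beta)$, with the $\IncompleteBetaRegularized$ factor equal to $1$ modulo an exponentially small tail. The Tricomi--Erd\'elyi asymptotic expansion
$$\frac{\gammafcn(N-\beta)}{\gammafcn(N-\beta+\nu/\mu)} \sim N^{-\nu/\mu}\sum_{k\geq 0}\binom{-\nu/\mu}{k}\,B_k^{(1-\nu/\mu)}(-\beta)\,N^{-k}$$
then delivers \eqref{eq:more.precise.asymptotics.B}, the stated coefficient $\frac{1}{2}(\nu/\mu)(2\beta + 1 - \nu/\mu)$ of the $1/N$ term following from $B_1^{(1-\nu/\mu)}(-\beta) = -\beta - \frac{1}{2}(1-\nu/\mu)$.
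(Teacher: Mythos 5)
Your proposal follows essentially the same route as the paper's proof: the substitution $u = c\,x^\mu$, the splitting into the regularized incomplete beta main term plus the remainder $\mathcal{R}$ with the same $h(u)$, absorption of the perturbation into a factor $\left[ 1 - u \left( 1 - C\,\tau^\kappa \right) \right]^{N-\beta-2}$ followed by the complete-beta and gamma-ratio bounds, and the standard expansion of $\gammafcn(N-\beta)/\gammafcn(N-\beta+\nu/\mu)$ for the case $g \equiv 0$ (your Tricomi--Erd\'elyi coefficient computation reproduces the stated $\tfrac12\tfrac{\nu}{\mu}(2\beta+1-\tfrac{\nu}{\mu})$ exactly as the paper gets it from the DLMF). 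The only variation is that you control $h(u)$ via the elementary bound $|(1-s)^n-1|\le n|s|(1+|s|)^{n-1}$ in the perturbation $s=\frac{u}{1-u}g\big((u/c)^{1/\mu}\big)$, whereas the paper applies the mean value theorem to $h$ as a function of $u$ (which is why it needs the hypothesis on $g^\prime$ and picks up the extra factor $\frac{1+\mu}{\mu}$); your route dispenses with the $g^\prime$ bound and yields a slightly sharper constant that still implies \eqref{eq:asymptotics.R}.
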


\begin{proof}
Let $G_\tau( \nu, N - \beta )$ denote the integral on the LHS of \eqref{eq:asymptotics.A}.
Then the change of variable $u = c x^\mu$ gives an incomplete beta-function-like integral
\begin{equation*}
G_\tau( \nu, N - \beta ) =
  \frac{c^{-\nu/\mu}}{\mu} \int_0^{c \, \tau^\mu} u^{\nu/\mu-1}
  \left[ 1 - u \left\{ 1 + g\Big( \left( \frac{u}{c} \right)^{1/\mu} \Big) \right\} \right]^{N-\beta-1} \dd u,
\end{equation*}
which can be rewritten as follows
\begin{equation*}
  G_\tau( \nu, N - \beta ) = \frac{c^{-\nu/\mu}}{\mu}
  \frac{\gammafcn( \nu / \mu ) \gammafcn( N - \beta )}{\gammafcn( N - \beta + \nu / \mu)} \IncompleteBetaRegularized_{c \, \tau^\mu}(\nu/\mu, N-\beta) +
  \mathcal{R}_{c \, \tau^\mu}( \nu / \mu, N - \beta ).
\end{equation*}
The ratio of Gamma functions has the asymptotic expansion (\cite[Eqs.~5.11.13 and 5.11.15]{NIST:DLMF})
\begin{equation} \label{eq:gamma.ratio.asymptotics}
\frac{\gammafcn( N - \beta )}{\gammafcn( N - \beta + \nu / \mu)} = N^{-\nu/\mu} \left\{ 1 + \frac{1}{2} \, \frac{\nu}{\mu} \left( 2 \beta + 1 - \frac{\nu}{\mu} \right) \frac{1}{N} + \mathcal{O}\left( \frac{1}{N^2} \right) \right\} \quad \text{as $N \to \infty$,}
\end{equation}
whereas the regularized incomplete beta function admits the asymptotics (\cite[Eq.~8.18.1]{NIST:DLMF})
\begin{equation*}
\begin{split}
&1-\IncompleteBetaRegularized_{c \, \tau^\mu}( \nu / \mu , N - \beta)
= \left( 1 - c \, \tau^\mu \right)^{N-\beta} \left( c \, \tau^\mu \right)^{\nu / \mu - 1} \\
&\phantom{equals\pm}\times \left\{ \sum_{k = 0}^{n - 1} \frac{\gammafcn( N - \beta + \nu / \mu )}{\gammafcn( N - \beta + k + 1 ) \gammafcn( \nu / \mu - k )} \left( \frac{1 - c \, \tau^\mu}{c \, \tau^\mu} \right)^k + \mathcal{O}\Big( \frac{\gammafcn( N - \beta + \nu / \mu )}{\gammafcn( N - \beta + n + 1 )} \Big) \right\}
\end{split}
\end{equation*}
so that the departure from 1 has exponential decay as $N \to \infty$. (Note that the expression in curly braces is exact [with $\mathcal{O}$-term omitted] if $\nu / \mu$ is a positive integer and $n \geq \nu / \mu$.)

It remains to investigate the term $\mathcal{R}_{c \, \tau^\mu}( \nu /\mu, N - \beta )$. By the mean value theorem
$h( u ) = u \, h^\prime( \xi )$ for some $0 < \xi = \xi(u) < u$. From
\begin{equation*}
\begin{split}
h^\prime( u )
&= \left( N - \beta - 1 \right) \left[ 1 - \frac{u}{1-u} \, g\Big( \left( \frac{u}{c} \right)^{1/\mu} \Big) \right]^{N-\beta-2} \\
&\phantom{=\pm}\times \left\{ - \frac{g( (u/c)^{1/\mu} )}{\left( 1 - u \right)^2} - \frac{1}{\mu} \, \frac{(u/c)^{1/\mu} \, g^\prime( (u/c)^{1/\mu} )}{1-u} \right\}
\end{split}
\end{equation*}
and the assumptions regarding $g$ it follows that 
\begin{equation*}
\left| h^\prime( \xi ) \right| \leq \frac{C}{1 - c \, \tau^\mu} \frac{1+\mu}{\mu} \left( N - \beta - 1 \right) \left[ 1 + \frac{u}{1-u} \, C \left( \frac{u}{c} \right)^{\kappa/\mu} \right]^{N-\beta-2} \frac{\left( u / c \right)^{\kappa/\mu}}{1 - u} 
\end{equation*}
for $u \in [0, c \, \tau^\mu]$. (The square-bracketed expression can be omitted if $g$ is positive. This observation simplifies the next estimates and removes the requirement that $C \, \tau^{\kappa} < 1$.) Hence
\begin{equation*}
\left| \mathcal{R}_{c \, \tau^\mu}( \nu / \mu, N - \beta ) \right| \leq \frac{C}{1 - c \, \tau^\mu} \, \frac{c^{-(\nu+\kappa)/\mu}}{\mu} \frac{1+\mu}{\mu} \left( N - \beta - 1 \right) H_{c \, \tau^\mu}( \nu / \mu, N - \beta ),
\end{equation*}
where the change of variable $v = ( 1 - C \, \tau^\kappa ) u$ gives the desired incomplete beta function:
\begin{align*}
H_{c \, \tau^\mu}( \nu / \mu, N - \beta )
&\DEF \int_0^{c \, \tau^\mu} u^{(\nu+\kappa)/\mu} \left[ 1 - u \left\{ 1 - C \, \tau^\kappa \right\} \right]^{N - \beta - 2} \dd u \\
&= \frac{1}{\left( 1 - C\,\tau^\kappa \right)^{1+(\nu+\kappa)/\mu}} \int_{0}^{\left( 1 - C \, \tau^\kappa \right) \, c \, \tau^\mu} v^{1 + (\nu+\kappa)/\mu - 1} \left( 1 - v \right)^{N - \beta - 1 - 1} \dd v \\
&\leq \frac{1}{\left( 1 - C\,\tau^\kappa \right)^{1+(\nu+\kappa)/\mu}} \, \frac{\gammafcn( 1 + ( \nu + \kappa ) / \mu ) \gammafcn( N - \beta - 1 )}{\gammafcn( N - \beta + ( \nu + \kappa ) / \mu )}, 
\end{align*}
and hence
\begin{equation*}
\begin{split}
\left| \mathcal{R}_{c \, \tau^\mu}( \nu / \mu, N - \beta ) \right|
&\leq \frac{C}{1 - c \, \tau^\mu} \, \frac{c^{-(\nu+\kappa)/\mu}}{\mu} \frac{1+\mu}{\mu} \frac{N - \beta - 1}{\left( 1 - C\,\tau^\kappa \right)^{1+(\nu+\kappa)/\mu}} \\
&\phantom{equals}\times \frac{\gammafcn( 1 + ( \nu + \kappa ) / \mu ) \gammafcn( N - \beta - 1)}{\gammafcn( N - \beta + ( \nu + \kappa ) / \mu )}.
\end{split}
\end{equation*}
Using the estimate \cite[Eq.~5.6.8]{NIST:DLMF} for ratios of
gamma functions, valid for $N > \beta+1$ and $(\nu + \kappa)/\mu \geq 1$, we arrive at the result.
Furthermore, if $g \equiv 0$, then ${\mathcal{R}_{c \, \tau^\mu}( \nu / \mu, N - \beta )}$ vanishes and \eqref{eq:gamma.ratio.asymptotics} implies the more precise asymptotic expansion given in \eqref{eq:more.precise.asymptotics.B}.
\end{proof}

\begin{proof}[Proof of Theorem~\ref{thm:sums.of.powers.of.holes.radii}]
Let $K_N$ be the convex hull of $N$ points on $\Sph{d}$ that contains the origin. The $k$th facet with distance $a_k$ to the origin determines a hole with Euclidean hole radius $\rho_k$ with $\rho_k^2 = 2 - 2 a_k$. Hence, the sum of the $p$th powers (${p > 0}$) of the $f_d$ hole radii $\rho_1, \dots, \rho_{f_d}$ is given by
\begin{equation*}
\sum_{k=1}^{f_d} \left( \rho_k \right)^p = \sum_{k=1}^{f_d} \left( 2 - 2 \, a_k \right)^{p/2}.
\end{equation*}
(If the origin is not contained in the convex hull and, say, the $k^*$th facet is closest to the origin [there is only one such facet], then the term $( 2 - 2 \, a_{k^*} )^{p/2}$ needs to be replaced with $( 2 + 2 \, a_{k^*} )^{p/2}$. In such a case $\rho_{k^*}$ is also the covering radius of the $N$ points.)

Now let $K_N$ be the convex hull of the $N$ random points $\PT{X}_1, \dots, \PT{X}_N$. First observe that
\begin{equation*}
\begin{split}
\mathbb{E}\left[ \sum_{k = 1}^{f_d} \left( \rho_k \right)^p \right]
&= \int_{\Sph{d}} \cdots \int_{\Sph{d}} \sum_{k=1}^{f_d} \left( 2 - 2 \, a_k \right)^{p/2} \dd \sigma_d( \PT{x}_1) \cdots \sigma_d( \PT{x}_N) \\
&\phantom{=}+ \mathop{\int_{\Sph{d}} \cdots \int_{\Sph{d}}}_{\PT{0} \notin K_N} \left[ \left( 2 + 2 \, a_* \right)^{p/2} - \left( 2 - 2 \, a_* \right)^{p/2} \right] \dd \sigma_d( \PT{x}_1) \cdots \sigma_d( \PT{x}_N),
\end{split}
\end{equation*}
where $a_* = a_*( \PT{X}_1, \dots, \PT{X}_{N} )$ is the minimum of the distances from the origin to the facets of $K_N$.
For the computation of the asymptotic form of the expected value we adapt the approach in \cite{BuMuTi1985} and \cite{Mu1990}.
As all the vertices of $K_N$ are chosen independently and uniformly on $\Sph{d}$, the probability that the convex hull of $\PT{X}_1, \dots, \PT{X}_{d+1}$ forms a $d$-dimensional facet ${F = F( \PT{X}_1, \dots, \PT{X}_{d+1} )}$ of $K_N$ is
\begin{equation*}
\lambda^{N-d-1} + \left( 1 - \lambda \right)^{N-d-1} \qquad \text{for $N \geq d + 2$,}
\end{equation*}
where $\lambda = \lambda( \PT{X}_1, \dots, \PT{X}_{d+1} )$ is the $\sigma_d$-surface area of the smaller of the two spherical caps due to the intersection of $\Sph{d}$ with the supporting hyperplane of $\PT{X}_1, \dots, \PT{X}_{d+1}$. This is because points $\PT{X}_1,\ldots,\PT{X}_{d+1}$ form a facet of $K_N$ if and only if all subsequent points $\PT{X}_{d+2},\ldots,\PT{X}_N$ fall on the same side of the supporting hyperplane. There are $\binom{N}{d+1}$ possibilities of selecting $d+1$ points out of $N$.
The probability that the origin is not an interior point of $K_N$ tends to zero exponentially fast as $N \to \infty$ (cf. \cite{We1962}), so that
\begin{equation*}
\begin{split}
\mathbb{E}\left[ \sum_{k = 1}^{f_d} \left( \rho_k \right)^p \right]
&\sim \binom{N}{d+1} \int_{\Sph{d}} \cdots \int_{\Sph{d}} \left( \lambda^{N-d-1} + \left( 1 - \lambda \right)^{N-d-1} \right) \\
&\phantom{equalsequals\pm}\times \left( 2 - 2 a \right)^{p/2} \dd \sigma_d( \PT{x}_1 ) \cdots \dd \sigma_d( \PT{x}_{d+1} ) \qquad \text{as $N \to \infty$}
\end{split}
\end{equation*}
and the error thus introduced is negligible compared to the leading terms in the asymptotics. Here $a = a( \PT{X}_1, \dots, \PT{X}_{d+1} )$ denotes the distance of the facet $F$ from the origin. The integral can be rewritten using a stochastically equivalent sequential method to choose $d+1$ points independently and uniformly on $\Sph{d}$ (cf. \cite{BuMuTi1985} and, in particular, \cite[Theorem~4]{Mi1971}). This method utilizes in the first step a sphere intersecting a random hyperplane with unit normal vector $\PT{u}$ uniformly chosen from $\Sph{d}$ where the distance $a$ of the plane to the origin is distributed according to the probability density function
\begin{equation*}
\frac{2}{\betafcn(d^2/2,1/2)} \left( 1 - a^2 \right)^{d^2/2 - 1}, \qquad 0 \leq a \leq 1,
\end{equation*}
where $\betafcn(a,b)$ denotes the beta function. In the second independent step choose $d+1$ random points $\overline{\PT{x}}_1, \dots, \overline{\PT{x}}_{d+1}$ from the intersection of the hyperplane and $\Sph{d}$ (which is a $(d-1)$-dimensional sphere of radius $\sqrt{1-a^2}$) so that the density transformation
\begin{equation*}
\begin{split}
\dd \sigma_d( \PT{x}_1 ) \cdots \dd \sigma_d( \PT{x}_{d+1} )
&=  d! \frac{\left( \omega_{d-1} \right)^{d+1}}{\left( \omega_d \right)^d} \left( 1 - a^2 \right)^{(d+1)(d-2)/2} T \\
&\phantom{=\pm}\times \frac{\dd \omega^\prime( \overline{\PT{x}}_1 )}{\left( 1 - a^2 \right)^{(d-1)/2} \omega_{d-1}} \cdots \frac{\dd \omega^\prime( \overline{\PT{x}}_{d+1} )}{\left( 1 - a^2 \right)^{(d-1)/2} \omega_{d-1}} \, \dd a \, \dd \sigma_d( \PT{u} )
\end{split}
\end{equation*}
applies, where $T$ is the $d$-dimensional volume of the convex hull of the points $\overline{\PT{x}}_1, \dots, \overline{\PT{x}}_{d+1}$ and $\omega^\prime$ denotes the surface area measure of the intersection of $\Sph{d}$ and the hyperplane. Furthermore, $\lambda$ is the $\sigma_d$-measure of the spherical cap $\{ \PT{x} \in \Sph{d} : \PT{x} \cdot \PT{u} \geq a \}$ and depends on $a$ only; i.e., application of the Funk-Hecke formula (cf. \cite{Mu1966}) yields for $0 \leq a \leq 1$
\begin{align}
\lambda &= \frac{\omega_{d-1}}{\omega_d} \int_a^1 \left( 1 - t^2 \right)^{d/2 - 1} \dd t = \frac{1}{2} - \frac{\omega_{d-1}}{\omega_d} \int_0^a \left( 1 - t^2 \right)^{d/2 - 1} \dd t, \label{eq:lambda.int.represent}\\
1 - \lambda &= \frac{\omega_{d-1}}{\omega_d} \int_{-1}^a \left( 1 - t^2 \right)^{d/2 - 1} \dd t = \frac{1}{2} + \frac{\omega_{d-1}}{\omega_d} \int_0^a \left( 1 - t^2 \right)^{d/2 - 1} \dd t.\notag
\end{align} 
Thus, we arrive at
\begin{equation*}
\begin{split}
\mathbb{E}\left[ \sum_{k = 1}^{f_d} \left( \rho_k \right)^p \right]
&\sim \binom{N}{d+1} \, d! \frac{\left( \omega_{d-1} \right)^{d+1}}{\left( \omega_d \right)^d} \, \int_{\Sph{d}} \int_0^1 \left( \lambda^{N-d-1} + \left( 1 - \lambda \right)^{N-d-1} \right) \\
&\phantom{equalsequals\pm}\times \left( 2 - 2 a \right)^{p/2} \left( 1 - a^2 \right)^{(d+1)(d-2)/2} m_1( a ) \dd a \dd \sigma_d( \PT{u} ), 
\end{split}
\end{equation*}
where $m_1(a)$ is the first moment of the $d$-dimensional volume of the convex hull of ${d+1}$ points chosen from the $(d-1)$-dimensional sphere with radius $r = \sqrt{1 - a^2}$. From \cite[Theorem~2]{Mi1971}, we obtain
\begin{equation*}
m_1( a ) = \frac{2}{d!} \, \frac{1}{\betafcn( d^2/2, 1/2 )} \, \frac{\left( \omega_{d} \right)^d}{\left( \omega_{d-1} \right)^{d+1}} \left( 1 - a^2 \right)^{d/2}.
\end{equation*}
Thus ($\int_{\Sph{d}} \dd \sigma_d = 1$)
\begin{equation*}
\mathbb{E}\left[ \sum_{k = 1}^{f_d} \left( \rho_k \right)^p \right] \sim \frac{\displaystyle 2 \binom{N}{d+1}}{\betafcn( d^2/2, 1/2 )} \, \int_0^1 \left( \lambda^{N-d-1} + \left( 1 - \lambda \right)^{N-d-1} \right) \left( 2 - 2 \, a \right)^{p/2} \left( 1 - a^2 \right)^{d^2/2 - 1} \dd a. 
\end{equation*}
The change of variable $2 ( 1 - a ) = \rho^2$ introduces the Euclidean radius of the spherical cap with $\sigma_d$-surface area $\lambda$ into the integral; i.e.,
\begin{equation*}
\mathbb{E}\left[ \sum_{k = 1}^{f_d} \left( \rho_k \right)^p \right] \sim \frac{\displaystyle 2 \binom{N}{d+1}}{\betafcn( d^2/2, 1/2 )}  \, \int_0^{\sqrt{2}} \rho^p \left( \lambda^{N-d-1} + \left( 1 - \lambda \right)^{N-d-1} \right) \rho^{d^2-1} \left( 1 - \frac{\rho^2}{4} \right)^{d^2/2 - 1} \dd \rho 
\end{equation*}
and $\lambda$ can be represented by \eqref{eq:lambda.int.represent} as
\begin{equation} \label{eq:lambda}
\lambda = \IncompleteBetaRegularized_{\rho^2/4}( d/2, d/2 ) = \frac{1}{d} \frac{\omega_{d-1}}{\omega_d} \, \rho^d \, \Hypergeom{2}{1}{1-d/2,d/2}{1+d/2}{ \frac{\rho^2}{4}}.
\end{equation}
The last hypergeometric function reduces to a polynomial of degree $d/2-1$ for even $d$.
Since $\lambda \leq 1/2$ for all $\rho \in [0,\sqrt{2}]$, the contribution due to $\lambda^{N-d-1}$ decays exponentially fast; i.e.,
\begin{equation*}
\begin{split}
&\frac{2}{\betafcn( d^2/2, 1/2 )} \binom{N}{d+1} \, \int_0^{\sqrt{2}} \rho^p \, \lambda^{N-d-1} \, \rho^{d^2-1} \left( 1 - \frac{\rho^2}{4} \right)^{d^2/2 - 1} \dd \rho \\
&\phantom{equals}\leq \left( \frac{1}{2} \right)^{N-d-1} \frac{2^{p+d^2}}{\betafcn( d^2/2, 1/2 )} \binom{N}{d+1} \, \int_0^{1/2} u^{p/2} \left[ u \left( 1 - u \right) \right]^{d^2/2-1} \dd u.
\end{split}
\end{equation*}
Consequently, up to an exponentially fast decaying contribution,
\begin{equation} \label{eq:auxiliary.A}
\mathbb{E}\left[ \sum_{k = 1}^{f_d} \left( \rho_k \right)^p \right] \sim \frac{\displaystyle 2 \binom{N}{d+1}}{\betafcn( d^2/2, 1/2 )}  \, \int_0^{\sqrt{2}} \rho^p \left( 1 - \lambda \right)^{N-d-1} \, \rho^{d^2-1} \left( 1 - \frac{\rho^2}{4} \right)^{d^2/2 - 1} \dd \rho. 
\end{equation}

{\bf Case $d = 2$.} In this case $\lambda =\tfrac{1}{2}(1-a) = \tfrac{1}{2}(1-\rho^2/4)$, and the expected value formula simplify further to
\begin{align*}
\mathbb{E}\left[ \sum_{k = 1}^{f_2} \left( \rho_k \right)^p \right]
&\sim \frac{3}{2} \binom{N}{3} \int_0^{\sqrt{2}} \rho^{p + 3} \left( 1 - \frac{\rho^2}{4} \right)^{N-2} \dd \rho.
\end{align*}
Using Lemma~\ref{lem:asymptotics} with $\nu = p + 4$, $c = 1/4$, $\mu = 2$, $\beta = 1$, and $\tau = \sqrt{2}$ (so that $c \, \tau^\mu = 1/2$), we arrive after some simplifications at
\begin{equation*}
\mathbb{E}\left[ \sum_{k = 1}^{f_2} \left( \rho_k \right)^p \right] \sim \left( 2 N - 4 \right) 2^{p} \frac{\gammafcn( 2 + \frac{p}{2} ) \gammafcn( N + 1 )}{\gammafcn( N + 1 + p/2 )} \left\{ 1 - \IncompleteBetaRegularized_{1/2}( N - 1, 2 + p/2 ) \right\}.
\end{equation*}
The regularized incomplete beta function decays exponentially fast as $N \to \infty$. The classical asymptotic formula for ratios of gamma functions (\cite[Eq.s~5.11.13 and 5.11.17]{NIST:DLMF}) yields
\begin{equation*}
\mathbb{E}\left[ \sum_{k = 1}^{f_2} \left( \rho_k \right)^p \right] \sim \left( 2 N - 4 \right) 2^{p} \gammafcn( 2 + \frac{p}{2} ) \, N^{-p/2} \left\{ 1 + \sum_{\ell=1}^{L-1} \frac{\binom{-p/2}{\ell} \, B_\ell^{(1-p/2)}(1)}{N^\ell} + \mathcal{O}\left( \frac{1}{N^L} \right) \right\},
\end{equation*}
where $B_\ell^{(a)}(x)$ are the generalized Bernoulli polynomials.

{\bf General case $d \geq 2$.}
From \eqref{eq:auxiliary.A} we get
\begin{equation*}
\mathbb{E}\left[ \sum_{k = 1}^{f_d} \left( \rho_k \right)^p \right] \sim \frac{2}{\betafcn( d^2/2, 1/2 )} \, \binom{N}{d+1} \, \int_0^{\sqrt{2}} \rho^{p + d^2 - 1} \left( 1 - \lambda \right)^{N-d-1} \left\{ 1 + \mathcal{O}(\rho^2) \right\} \dd \rho,
\end{equation*}
where $\lambda$ is given in \eqref{eq:lambda}.
We want to apply Lemma~\ref{lem:asymptotics}. Observe that
\begin{equation*}
  \kappa_d  \DEF \frac{1}{d} \, \frac{\omega_{d-1}}{\omega_d} =
  \frac{1}{d} \, \frac{\gammafcn( (d+1)/2 )}{\sqrt{\pi} \, \gammafcn( d/2 )} <
  \sqrt{\frac{1}{2\pi} \, \frac{d+1}{d^2}} \leq \frac{1}{\sqrt{\pi}} < 1, \qquad d \geq 1,
\end{equation*}
by \cite[Eq.~5.6.4]{NIST:DLMF}. Furthermore, the continuously differentiable auxiliary function
\begin{equation*}
h( u ) \DEF \Hypergeom{2}{1}{1-d/2,d/2}{1+d/2}{u} - 1 
\end{equation*}
satisfies $h( u ) = u \, h^\prime( \xi )$ for some $\xi = \xi( u ) \in (0, u)$ for every $u \in (0,1)$ by the mean value theorem, whereas
\begin{equation*}
h^\prime( u ) = - \frac{(d/2-1) (d/2)}{d/2 + 1} \, \Hypergeom{2}{1}{2-d/2,1+d/2}{2+d/2}{u}. 
\end{equation*}
Clearly, $| h( u ) | \leq C \, u$ and $| h^\prime( u ) | \leq C$ on $[0,1]$ for
$C \DEF \max_{u \in [0,1]} | h^\prime( u ) |$ if $d \geq 3$ (and $h \equiv 0$ for $d = 2$).
 Letting $g( \rho ) \DEF h( \rho^2 / 4 )$, it follows that $| g( \rho ) | \leq C \, \rho^2 / 4$
and $| g^\prime( \rho ) | \leq C \, \rho / 2$ on $[0,2]$.
Hence, we can apply Lemma~\ref{lem:asymptotics} with $\nu = p + d^2$, $c \DEF \kappa_d$,
$\mu = d$, $\beta = d$, $\kappa = 2$, and $\tau \in ( 0, \sqrt{2} )$
restricted such that $\kappa_d \, \tau^d < 1$ and $C \, \tau^2 < 1$, to obtain
\begin{equation}
\begin{split}
&\mathbb{E}\left[ \sum_{k = 1}^{f_d} \left( \rho_k \right)^p \right]
  \sim \frac{2}{d} \, \frac{\gammafcn( (d^2 + 1)/2 )}{\sqrt{\pi} \, \gammafcn( d^2 / 2 )} \left( \kappa_d \right)^{-d-p/d} \,
    \binom{N}{d+1} \, \frac{\gammafcn( d + p / d ) \gammafcn( N - d )}{\gammafcn( N + p / d )} \\
& \phantom{equals} \times \left\{ 1 - \IncompleteBetaRegularized_{1 - c \, \tau^\mu}( N - d, d + \frac{p}{d}) \right\}
    + \frac{\displaystyle 2 \binom{N}{d+1}}{\betafcn( d^2/2, 1/2 )} \, \mathcal{R}_{1 - c \, \tau^\mu}( d + \frac{p}{d}, N - d ),
\end{split} \label{eq:main.E.sum.relation}
\end{equation}
where we omitted the integral over $[\tau,\sqrt{2}]$ that goes exponentially to zero as $N\to \infty$. Thus
\begin{equation*}
\mathbb{E}\left[ \sum_{k = 1}^{f_d} \left( \rho_k \right)^p \right]
  \sim \frac{2}{d} \, \frac{\gammafcn( (d^2 + 1)/2 )}{\sqrt{\pi} \, \gammafcn( d^2 / 2 )} \left( \kappa_d \right)^{-d-p/d}
    \frac{\gammafcn( d + p/d )}{(d+1)!} \, \frac{\gammafcn( N + 1 )}{\gammafcn( N + p/d )} \left\{ 1 + \mathcal{O}( N^{-2/d} ) \right\}.
\end{equation*}
This shows the first part of the formula. The second form follows when substituting the asymptotic expansion of the ratio $\gammafcn( N + 1 ) / \gammafcn( N + p/d )$. 
\end{proof}


\subsection{Proofs of Section~\ref{sec:separation.random.points}}

For the proof of Corollary~\ref{cor:limit.random.separation} we need the following result.

\begin{lem} \label{lem:aux.expected}
The expected value and the variance of the random variable $Y$ with CDF \eqref{eq:limiting.CDF} are
\begin{equation}
\begin{split} \label{eq:expected.value.Y}
\mathbb{E}[ Y ]
&= \int_0^\infty x \dd F(x) = \left( \frac{1}{2} \kappa_d \right)^{-1/d} \gammafcn( 1 + 1 / d ) \\
&= 1 + \frac{\log \sqrt{d}}{d} + \frac{\log \sqrt{8 \pi} - \EulerGamma}{d} + \mathcal{O}( \frac{1}{d^2} ) \qquad \text{as $d \to \infty$,}
\end{split}
\end{equation}
where $\EulerGamma$ is the Euler-Mascheroni constant and
\begin{equation}
\begin{split} \label{eq:variance.Y}
\var( Y )
&= \int_0^\infty \left( x - \mathbb{E}[ Y ] \right)^2 \dd F(x) = \left( \mathbb{E}[ Y ] \right)^2 \left( \frac{\gammafcn( 1 + 2/d )}{[ \gammafcn( 1 + 1/d ) ]^2} - 1 \right) \\
&= \frac{\pi^2}{6} \, \frac{1}{d^2} + \frac{\pi^2}{6} \, \frac{\log d}{d^3} + \mathcal{O}( \frac{1}{d^3} ) \qquad \text{as $d \to \infty$.}
\end{split}
\end{equation}
\end{lem}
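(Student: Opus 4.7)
The plan is to compute both moments exactly by reducing them to Gamma function integrals, and then to extract the asymptotic expansions using Stirling's formula for $\kappa_d$ together with the Taylor series of $\log\gammafcn(1+x)$ at $x=0$.

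First I would differentiate the CDF \eqref{eq:limiting.CDF} to obtain the density $f(t) = \tfrac{d\,\kappa_d}{2}\, t^{d-1} e^{-\kappa_d t^d/2}$ for $t\geq 0$. The substitution $u = \kappa_d t^d/2$, which gives $t = (2u/\kappa_d)^{1/d}$ and $du = \tfrac{d\,\kappa_d}{2}t^{d-1}\,dt$, immediately yields
\begin{equation*}
\mathbb{E}[Y^k] = \int_0^\infty t^k f(t)\,dt = \left(\frac{2}{\kappa_d}\right)^{k/d}\int_0^\infty u^{k/d}e^{-u}\,du = \left(\frac{\kappa_d}{2}\right)^{-k/d}\gammafcn\!\left(1+\frac{k}{d}\right)
\end{equation*}
for any $k\geq 0$. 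Setting $k=1$ gives the exact formula for $\mathbb{E}[Y]$ in \eqref{eq:expected.value.Y}, and the exact form of the variance in \eqref{eq:variance.Y} then follows from $\var(Y) = \mathbb{E}[Y^2] - (\mathbb{E}[Y])^2$ after factoring out $(\mathbb{E}[Y])^2$.

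For the asymptotics of $\mathbb{E}[Y]$ as $d\to\infty$, I would first expand $\kappa_d$. Using Stirling's formula for the ratio $\gammafcn((d+1)/2)/\gammafcn(d/2) = \sqrt{d/2}\,\{1+\mathcal{O}(1/d)\}$, one finds $\kappa_d = 1/\sqrt{2\pi d}\,\{1+\mathcal{O}(1/d)\}$, so
\begin{equation*}
\log\!\left(\frac{2}{\kappa_d}\right) = \log\sqrt{8\pi} + \log\sqrt{d} + \mathcal{O}\!\left(\frac{1}{d}\right),
\qquad
\left(\frac{2}{\kappa_d}\right)^{1/d} = 1 + \frac{\log\sqrt{d}}{d} + \frac{\log\sqrt{8\pi}}{d} + \mathcal{O}\!\left(\frac{1}{d^2}\right).
\end{equation*}
Combining this with $\gammafcn(1+1/d) = 1 - \EulerGamma/d + \mathcal{O}(1/d^2)$, obtained from the Taylor expansion $\log\gammafcn(1+x) = -\EulerGamma x + \sum_{k\geq 2}(-1)^k \zeta(k)x^k/k$, and multiplying the two expansions gives precisely the asymptotic form stated in \eqref{eq:expected.value.Y}.

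For the variance expansion, the key observation is that the prefactor $(\mathbb{E}[Y])^2$ equals $1 + \mathcal{O}((\log d)/d)$, so it suffices to expand the bracket $\gammafcn(1+2/d)/[\gammafcn(1+1/d)]^2 - 1$ to order $1/d^2$. Applying $\log\gammafcn(1+x) = -\EulerGamma x + \zeta(2)x^2/2 + \mathcal{O}(x^3)$ gives
\begin{equation*}
\log\frac{\gammafcn(1+2/d)}{[\gammafcn(1+1/d)]^2} = \zeta(2)\left(\frac{2}{d^2} - \frac{1}{d^2}\right) + \mathcal{O}\!\left(\frac{1}{d^3}\right) = \frac{\pi^2}{6\,d^2} + \mathcal{O}\!\left(\frac{1}{d^3}\right),
\end{equation*}
because the $\EulerGamma$-terms cancel. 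Exponentiating and multiplying by $(\mathbb{E}[Y])^2 = 1 + (\log d)/d + \mathcal{O}(1/d)$ reproduces the claimed expansion in \eqref{eq:variance.Y}; the main bookkeeping obstacle is verifying that the $(\log d)/d$ factor from $(\mathbb{E}[Y])^2$ indeed lands at the stated order $\pi^2(\log d)/(6 d^3)$, which is where care is needed, but no cancellation beyond $\zeta(2)$ is required to reach the stated order of approximation.
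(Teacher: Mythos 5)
Your proposal is correct and takes essentially the same route as the paper: both arguments reduce $\mathbb{E}[Y]$ and $\mathbb{E}[Y^2]$ to exact Gamma-function integrals (the paper integrates by parts against the tail $1-F$, you differentiate $F$ and substitute $u=\kappa_d t^d/2$ --- an immaterial difference), obtain the variance from $\mathbb{E}[Y^2]-(\mathbb{E}[Y])^2$, and then expand for large $d$, the only distinction being that you carry out the expansion by hand via Stirling and the Taylor series of $\log\gammafcn(1+x)$ whereas the paper simply reports the Mathematica output. One small caveat: the remainder in your expansion of $(2/\kappa_d)^{1/d}$ is $\mathcal{O}((\log d)^2/d^2)$, not $\mathcal{O}(1/d^2)$, since the square of the $(\log\sqrt{8\pi d})/d$ term survives; this does not affect the displayed $1/d$-terms or the variance expansion (where it enters only at order $(\log d)^2/d^4$), but it means the error term in \eqref{eq:expected.value.Y} should strictly be read as $\mathcal{O}((\log d)^2/d^2)$ --- an imprecision already present in the lemma as stated rather than a defect of your argument.
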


\begin{proof}
Let $R > 0$. Integration by parts gives
\begin{equation*}
\begin{split}
\int_0^R x \dd F( x )
&= R \left( 1 - e^{- ( \kappa_d / 2 ) \, R^d } \right) - \int_0^R F( x ) \dd x = - R \, e^{-( \kappa_d / 2 ) \, R^d} + \int_0^R e^{-( \kappa_d / 2 ) \, x^d} \dd x \\
&\to \int_0^\infty e^{-( \kappa_d / 2 ) \, x^d} \dd x \qquad \text{as $R \to \infty$.}
\end{split}
\end{equation*}
The last integral represents a gamma function (cf. \cite[Eq.~5.9.1]{NIST:DLMF}); i.e.,
\begin{equation*}
\int_0^\infty x \dd F(x) = \frac{( \kappa_d / 2 )^{-1/d}}{d} \int_0^\infty e^{-u} u^{1/d-1} \dd u = ( \kappa_d / 2 )^{-1/d} \gammafcn(1 + 1/d).
\end{equation*}
The asymptotic expansion for large $d$ has been obtained using {\sc Mathematica}. By the same token
\begin{equation*}
\begin{split}
\int_0^R x^2 \dd F( x )
&= R^2 \left( 1 - e^{- ( \kappa_d / 2 ) \, R^d } \right) - 2 \int_0^R x F( x ) \dd x = - R^2 \, e^{-( \kappa_d / 2 ) \, R^d} + 2 \int_0^R x e^{-( \kappa_d / 2 ) \, x^d} \dd x \\
&\to 2 \int_0^\infty x e^{-( \kappa_d / 2 ) \, x^d} \dd x \qquad \text{as $R \to \infty$.}
\end{split}
\end{equation*}
Thus
\begin{equation*}
\int_0^\infty x^2 \dd F( x ) = \frac{2( \kappa_d / 2 )^{2/d}}{d} \int_0^\infty e^{-u} u^{2/d-1} \dd u = ( \kappa_d / 2 )^{-2/d} \gammafcn(1 + 2/d)
\end{equation*}
and
\begin{equation*}
\var( Y ) = \int_0^\infty x^2 \dd F( x ) - \left( \mathbb{E}[ Y ] \right)^2 = \left( \mathbb{E}[ Y ] \right)^2 \left[ \frac{\gammafcn( 1 + 2/d )}{\gammafcn( 1 + 1/d ) \gammafcn( 1 + 1/d )} - 1 \right].
\end{equation*}
\end{proof}

\begin{proof}[Proof of Corollary~\ref{cor:limit.random.separation}]
Let $t \geq 0$ be fixed. Suppose $X_N = \{ \PT{x}_1, \dots, \PT{x}_N \}$ is a set of i.i.d. uniformly chosen points on $\Sph{d}$ and $X_N^\prime$ denotes $\{ \PT{x}_1, \dots, \PT{x}_{N-1} \}$. Then
\begin{equation*}
\prob( \sepDist( X_N ) > t ) = \prob\big( \Theta_{j,N} > t, \, j = 1, \dots, N - 1 \mid \sepDist( X_N^\prime ) > t \big) \cdot \prob\big( \sepDist( X_N^\prime ) > t \big).
\end{equation*}
We estimate the conditional probability. Due to the condition $\sepDist( X_N^\prime ) > t$, all the spherical caps of radius $t/2$ of equal area $A_d(t/2)$ centered at the points in $X_N^\prime$ are disjoint. Hence
\begin{align}
&\prob\big( \Theta_{j,N} > t, \, j = 1, \dots, N - 1 \mid \sepDist( X_N^\prime ) > t \big) \notag \\
&\phantom{equals}\leq \prob\big( \Theta_{j,N} \geq t/2, \, j = 1, \dots, N - 1 \mid \sepDist( X_N^\prime ) > t \big) \notag  \\
&\phantom{equals}= \max\big\{ 0, 1 - ( N - 1 ) A_d(t/2) \big\} \label{eq:upper.bound.prob.} \\
&\phantom{equals}\leq e^{- ( N - 1 ) A_d(t/2)} \notag . 
\end{align}
Since the points in $X_N$ are independently chosen, we also have 
\begin{equation*}
\prob( \sepDist( X_N^\prime ) > t ) = \prob( \sepDist( X_{N-1} ) > t )
\end{equation*}
for a set $X_{N-1}$ of i.i.d. uniformly chosen points on $\Sph{d}$. Thus,
\begin{equation*}
\prob( \sepDist( X_N ) > t ) \leq e^{- ( N - 1 ) A_d(t/2)} \, \prob( \sepDist( X_{N-1} ) > t ).
\end{equation*}
By successive application of this inequality, we arrive at
\begin{align*}
\prob( \sepDist( X_N ) > t ) 
&\leq e^{- \sum_{k=2}^{N-1} k A_d(t/2)} \, \prob( \sepDist( X_{2} ) > t ) \\
&\leq e^{- \sum_{k=2}^{N-1} k A_d(t/2)} e^{-  A_d(t/2)} \\
&\leq e^{- [ (N - 1) N / 2 ] A_d(t/2)}.
\end{align*}
Using the last relation with $t$ replaced by $N^{-2/d} t$, we have the following estimate for $F_N$ given in \eqref{eq:F.N.extreme.law}, 
\begin{equation*}
1 - F_N( t ) = \prob( \sepDist( X_N ) > N^{-2/d} t ) \leq e^{- [ (N - 1) N / 2 ] A_d(N^{-2/d} t/2)}.
\end{equation*}
Using $\rho = 2 \sin( N^{-2/d} t/4 )$ in the surface area formula \eqref{eq:sigma_d.C.rho}, we get 
\begin{align*}
A_d(N^{-2/d} t/2) = \sigma_d( C_\rho ) 
&\geq \kappa_d \rho^d = \kappa_d \left( 2 \sin( N^{-2/d} t/4 ) \right)^d \\
&\geq \kappa_d \left( N^{-2/d} t / 4  \right)^d = \kappa_d 2^{-2d} N^{-2} \, t^d,
\end{align*}
which implies
\begin{equation*}
1 - F_N( t ) \leq e^{- [ (N - 1) N / 2 ] \kappa_d 2^{-2d} N^{-2} \, t^d} \leq e^{-c_d \, t^d}
\end{equation*}
for some $c_d > 0$ depending only on $d$. Since the right-hand side above as a function in $t$ is integrable on $(0,\infty)$, the Lebesgue Dominated Convergence Theorem yields
\begin{align*}
\lim_{N \to \infty} \mathbb{E}[ N^{2/d} \, \sepDist( X_N ) ] 
&= \lim_{N \to \infty} \int_0^\infty \left( 1 - F_N( t ) \right) \dd t \\
&= \int_0^\infty \lim_{N \to \infty} \left( 1 - F_N( t ) \right) \dd t \\
&= \int_0^\infty \left( 1 - F( t ) \right) \dd t \\
&= \left( \frac{\kappa_d}{2} \right)^{-1/d} \gammafcn( 1 + 1 / d ),
\end{align*}
the last step following from Lemma~\ref{lem:aux.expected}.
\end{proof}

\begin{proof}[Proof of Proposition~\ref{prop:separation.probability}]
Let $C( \PT{x}, \eps )$ denote the spherical cap on $\Sph{d}$ of center $\PT{x} \in \Sph{d}$ and geodesic radius $\eps$. If we think of selecting the random points $\PT{X}_1, \dots, \PT{X}_N$ one after the other, we naturally write
\begin{equation*}
\begin{split}
\prob( \MARKED{red}{\sepDist( X_N )} \geq \eps )
&= \prob( \PT{X}_2 \not\in C( \PT{X}_1, \eps ) \mid \PT{X}_1 ) \\
&\phantom{=\pm}\times \prob( \PT{X}_3 \not\in C( \PT{X}_1, \eps ) \cup C( \PT{X}_2, \eps ) \mid \PT{X}_1, \PT{X}_2 ) \\
&\phantom{=\pm}\cdots \\
&\phantom{=\pm}\times \prob( \PT{X}_N \not\in C( \PT{X}_1, \eps ) \cup \cdots \cup C( \PT{X}_{N-1}, \eps ) \mid \PT{X}_1, \dots, \PT{X}_{N-1} ).
\end{split}
\end{equation*}
By the uniformity of the distribution,
\begin{align*}
&\prob( \PT{X}_{k+1} \not\in C( \PT{X}_1, \eps ) \cup \cdots \cup C( \PT{X}_{k}, \eps ) \mid \PT{X}_1, \dots, \PT{X}_{k} ) \\
&\phantom{equals}= 1 - \sigma_d\big( C( \PT{X}_1, \eps ) \cup \cdots \cup C( \PT{X}_{k}, \eps ) \big) \geq 1 - k \, A_d(\eps), \qquad k = 1, \dots, N - 1,
\end{align*}
and hence
\begin{equation*}
\prob( \MARKED{red}{\sepDist( X_N )} \geq \eps ) \geq \prod_{k=1}^{N-1} \left( 1 - k \, A_d(\eps) \right) \geq 1 - \binom{N}{2} \, A_d(\eps),
\end{equation*}
where the last inequality follows easily by induction. This establishes the first inequality in the \MARKED{red}{lower bound of $\prob( \MARKED{red}{\sepDist( X_N )} \geq \eps )$ in \eqref{eq:separation.probability.A}}. 
The second \MARKED{red}{lower bound} follows then from the well-known fact (cf., e.g., \cite{KuSa1998}) that
\begin{equation*}
A_d(\eps) = \sigma_d( C( \PT{x}, \eps ) ) \leq \kappa_d  \left( 2 \sin \frac{\eps}{2} \right)^d,
\end{equation*}
where the Euclidean radius $\rho$ and the geodesic radius $\eps$ are related by $\rho = 2 \sin( \eps / 2 )$.
\MARKED{red}{The upper bound of $\prob( \MARKED{red}{\sepDist( X_N )} \geq \eps )$  in \eqref{eq:separation.probability.A} follows from the proof of Corollary~\ref{cor:limit.random.separation} (cf.~\eqref{eq:upper.bound.prob.}).} 
Relation~\ref{eq:separation.probability.B} follows when setting $\eps = \delta \, N^{-2/d}$ in \eqref{eq:separation.probability.A}. Relation~\ref{eq:separation.probability.C} follows when setting $\delta = C_d$ ($C_d$ from Corollary~\ref{cor:limit.random.separation}) in \eqref{eq:separation.probability.B}. The asymptotic expansion of $( \gammafcn( 1 + 1 / d ) )^d$ can be obtained with the help of {\sc Mathematica}. The monotonicity of the right-hand side of \eqref{eq:separation.probability.C} follows from
\begin{equation*}
\frac{\dd }{\dd x} \left\{ 1 - \left[ \gammafcn\Big( 1 + \frac{1}{x} \Big) \right]^x \right\} = \left[ \gammafcn\Big( 1 + \frac{1}{x} \Big) \right]^x \left\{ \frac{\digammafcn( 1 + \frac{1}{x} )}{x} - \log \gammafcn\Big( 1 + \frac{1}{x} \Big) \right\}
\end{equation*}
and the observation that the right expression in braces is positive, because it is strictly decreasing as $x$ grows (since having the negative derivative $-\digammafcn_1( 1 + 1/x ) / x^3$) and it has the asymptotic expansion $\frac{\pi^2}{12 \, d^2} + \mathcal{O}(\frac{1}{d^3})$ as $d \to \infty$ with positive dominating term. Here, $\digammafcn( x ) \DEF \gammafcn^\prime( x ) / \gammafcn( x )$ is the digamma function and $\psi_1( x )$ is its derivative.
\end{proof}

\begin{proof}[Proof of Proposition~\ref{prop:limit.random.separation}]
Let $f( \delta )$ denote the lower bound of the expected value in \eqref{eq:expected.normalized.Theta.min.lower.bound}. Then
\begin{equation*}
f^\prime( \delta ) = 1 - \frac{1}{2} \, \frac{d+1}{d} \, \frac{\omega_{d-1}}{\omega_d} \delta^d
\end{equation*}
and $f$ thus has a unique minimum at $\delta = \delta^*$ with
\begin{equation*}
\delta^* \DEF \left( \frac{1}{2} \, \frac{d+1}{d} \, \frac{\omega_{d-1}}{\omega_d} \right)^{-1/d} \in \left( 0, \left[ \kappa_d \right]^{-1/d} \right) \qquad \text{for $d \geq 1$}
\end{equation*}
and assumes the value
\begin{equation*}
f( \delta^* ) = \left( \frac{1}{2} \, \frac{d+1}{d} \, \frac{\omega_{d-1}}{\omega_d} \right)^{-1/d} \left( 1 - \frac{1}{d+1} \right) = C_d \, \frac{\left( d + 1 \right)^{-1/d}}{\gammafcn( 2 + 1 / d )}.
\end{equation*}
The \MARKED{red}{asymptotic} expansion of $( d + 1)^{-1/d} / \gammafcn( 2 + 1/d )$ can be obtained with the help of {\sc Mathematica}.
\end{proof}

\section{Comparison with deterministic point sets}
\label{sec:comparison}

\subsection{Point sets}

Many different point sets have been studied and used in applications,
especially on $\Sph{2}$. The aim of this section is to give some idea of
the distribution of both the separation distance and the scaled hole radii
as a contrast to those for uniformly distributed random points. We
consider the following point sets $\PTCfg_N$.
\begin{itemize}
\item \textbf{RN} Pseudo-random points, uniformly distributed on the
    sphere.
\item \textbf{ME} Points chosen to minimize the Riesz $s$-energy for
    $s = 1$ (Coulomb energy)
\begin{equation*}
\sum_{j=1}^N \sum_{\substack{i=1\\ i\neq j}}^N \frac{1}{\left| \PT{x}_j - \PT{x}_i \right|^s}.
\end{equation*}
As $s\to \infty$ the minimal $s$-energy points approach best
separation.
\item \textbf{MD} Points chosen to maximize the determinant for polynomial interpolation \cite{SlWo2004}.
\item \textbf{FI} Fibonacci points, with $\Phi = (1 + \sqrt{5})/2$ and
    spherical coordinates $(\theta_j, \phi_j)$:
\begin{equation*}
  z_j = 1 - \tfrac{2j-1}{N}, \  \theta_j = \cos^{-1}(z_j), \  \phi_j = \tfrac{\pi}{\Phi} (N+1-2j) \mod 2\pi,
 \quad j = 1,\ldots,N.
\end{equation*}
\item \textbf{SD} Spherical $t$-designs with $N = \lceil (t+1)^2/2 \rceil + 1$ points for $t = 45$, so $N = 1059$.
\item \textbf{CV} Points chosen to minimize the covering radius $\alpha(\PTCfg_N)$ (best covering).
\item \textbf{PK} Points chosen to maximize the separation $\sepDist(\PTCfg_N)$ (best packing).
\item \textbf{PU} Points that maximize the $s$-polarization
\begin{equation*}
  \min_{\PT{x}\in\Sph{2}} \sum_{j=1}^N  \frac{1}{\left| \PT{x} - \PT{x}_j \right|^s}
\end{equation*}
for $s = 4$. As $s \to \infty$ the maximum polarization points approach best covering.
\end{itemize}
All the point sets that are characterised by optimizing a criterion are
faced with the difficulty of many local optima. Thus, for even modest
values of $N$, these point sets have objective values near, but not
necessarily equal to, the global optimum.

\subsection{Covering}

For a variety of point sets, Figure~\ref{fig:HolRadE} illustrates the
distribution of the $f_2 = 2N - 4$ (so $f_2 = 2044$ for $N = 1024$) scaled
(by $N^{1/2}$) Euclidean hole radii.
When changing from random to non-random points,
the distribution of hole radii changes significantly.
Figure~\ref{fig:HolRadE} has the center of the distribution at the lower
end of the range of realized hole radii, whereas for points with good
covering property the center of the distribution is, as expected, at the
upper end of the values of hole-radii. However, one should note that,
compared with the random setting, the distribution for hole radii for
point sets having small Coulomb energy or large polarization
are remarkable localized.

\begin{figure}[ht]
\begin{center}
\includegraphics[scale=0.35]{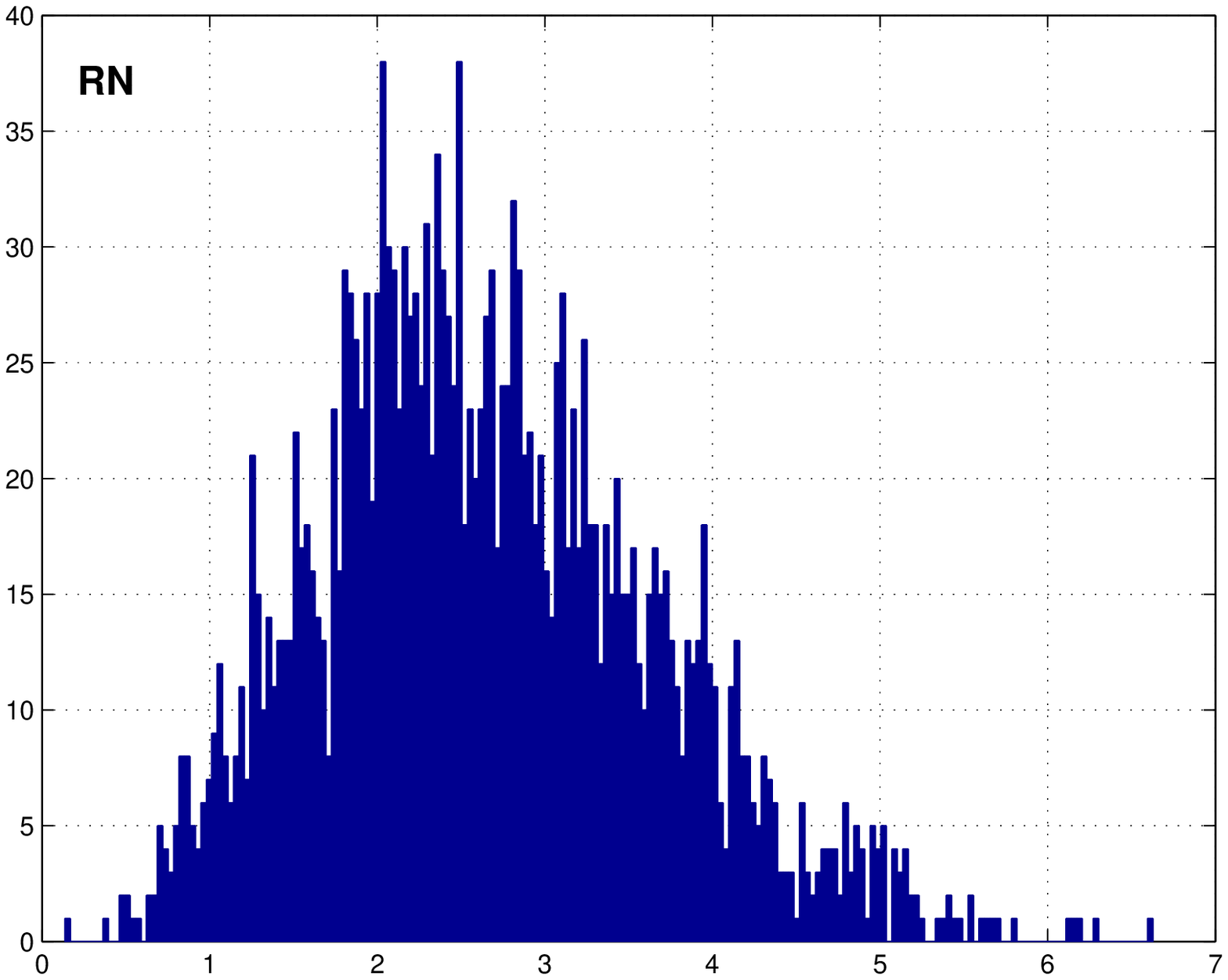} \quad
\includegraphics[scale=0.35]{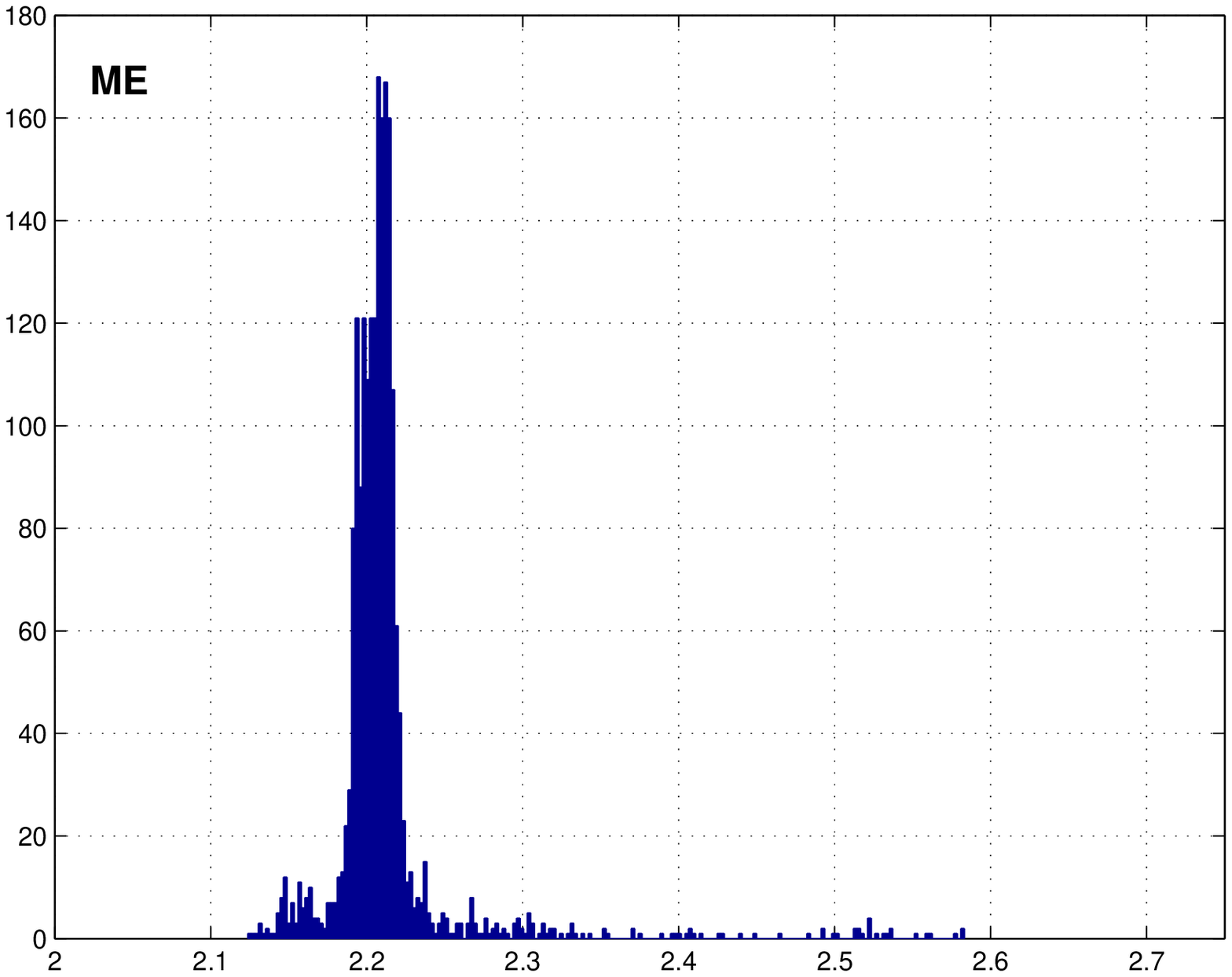} \\
\includegraphics[scale=0.35]{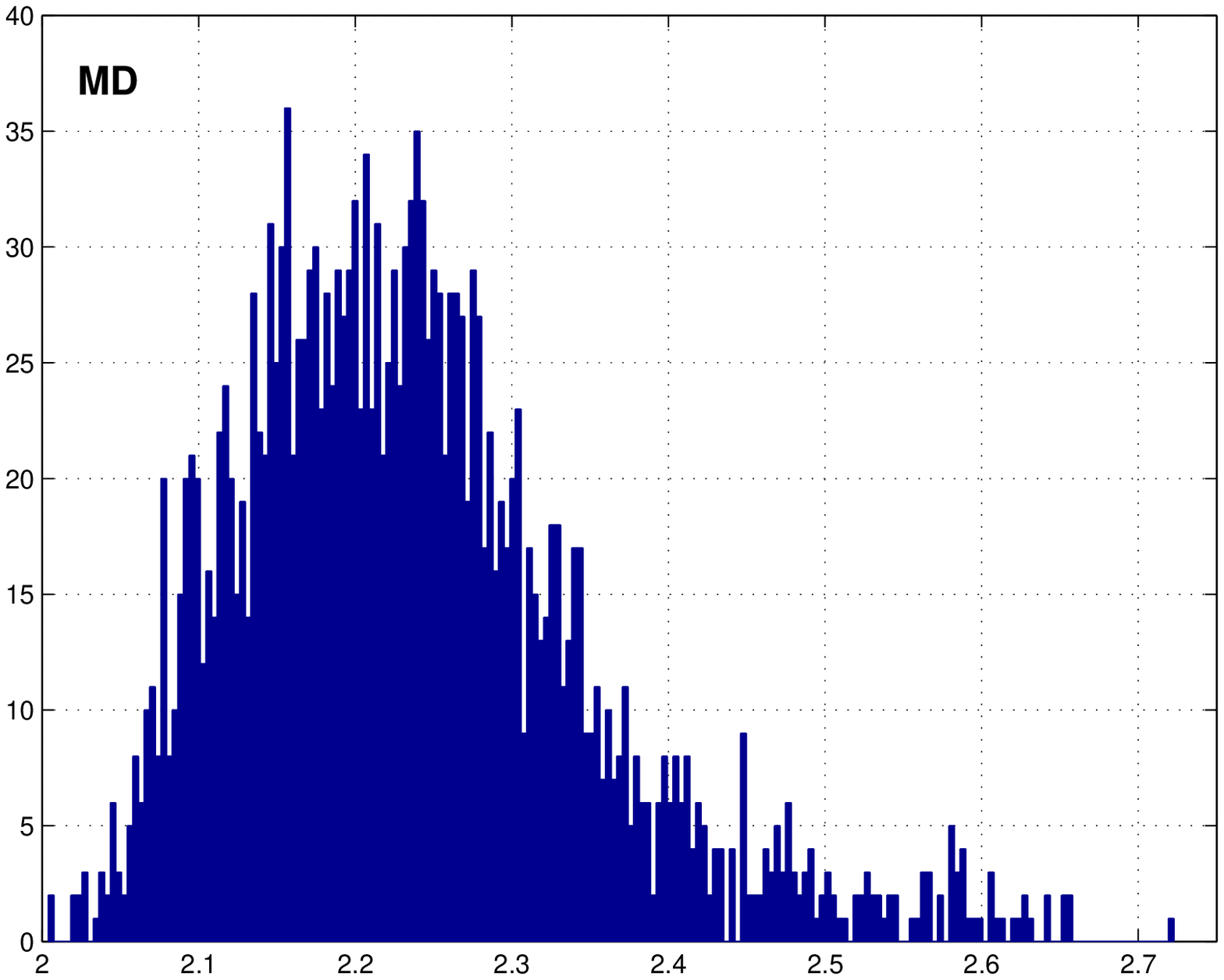} \quad
\includegraphics[scale=0.35]{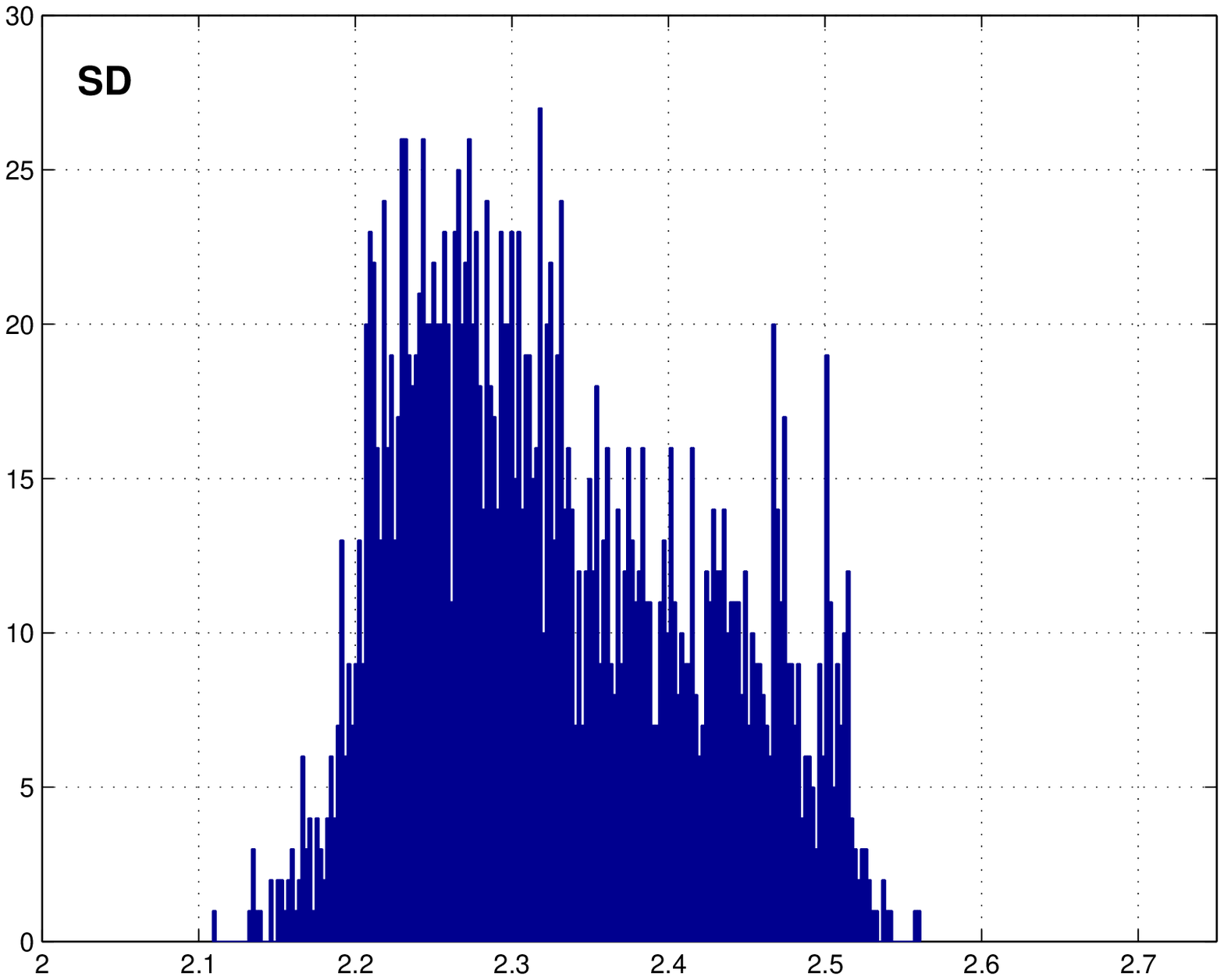} \\
\includegraphics[scale=0.35]{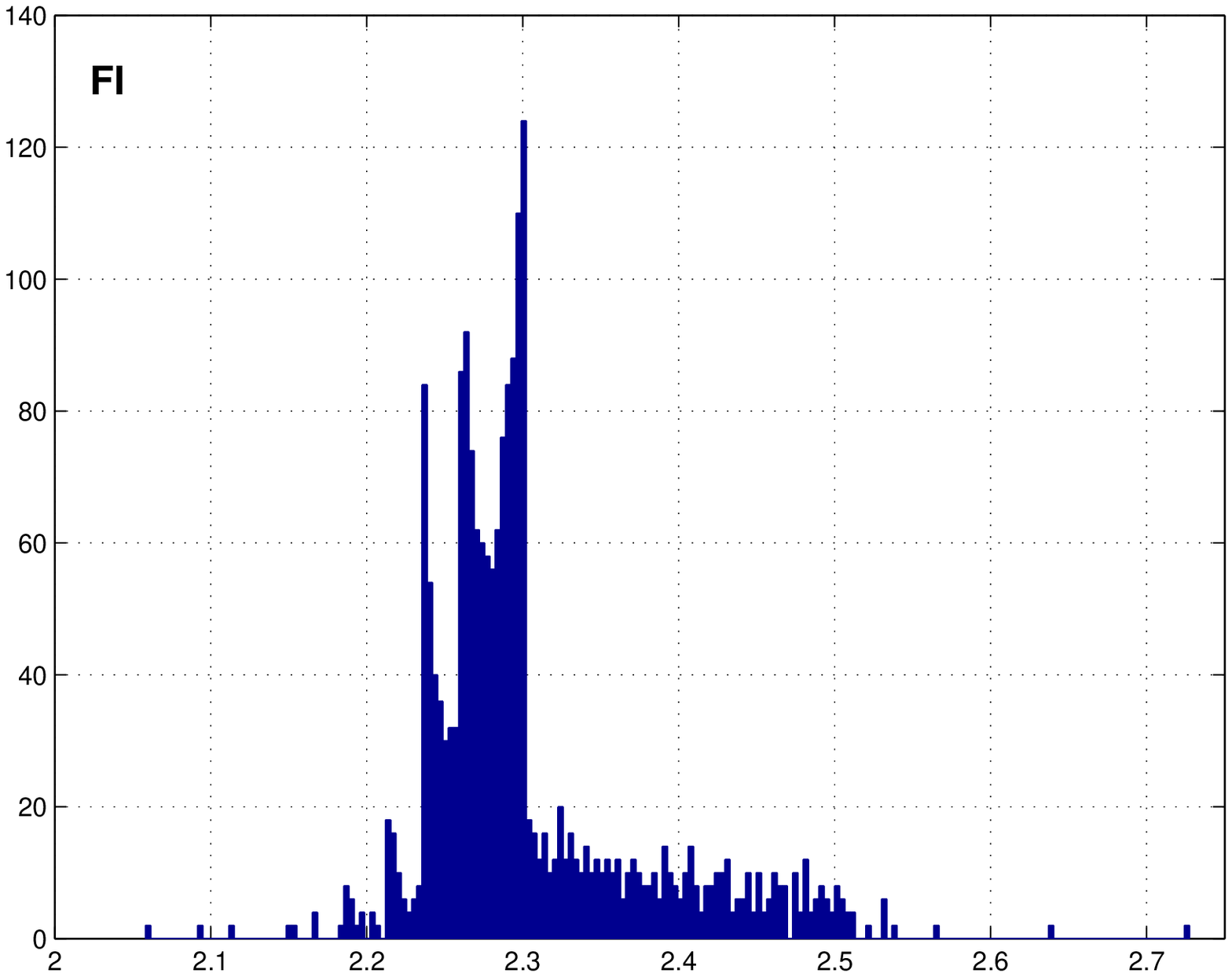} \quad
\includegraphics[scale=0.35]{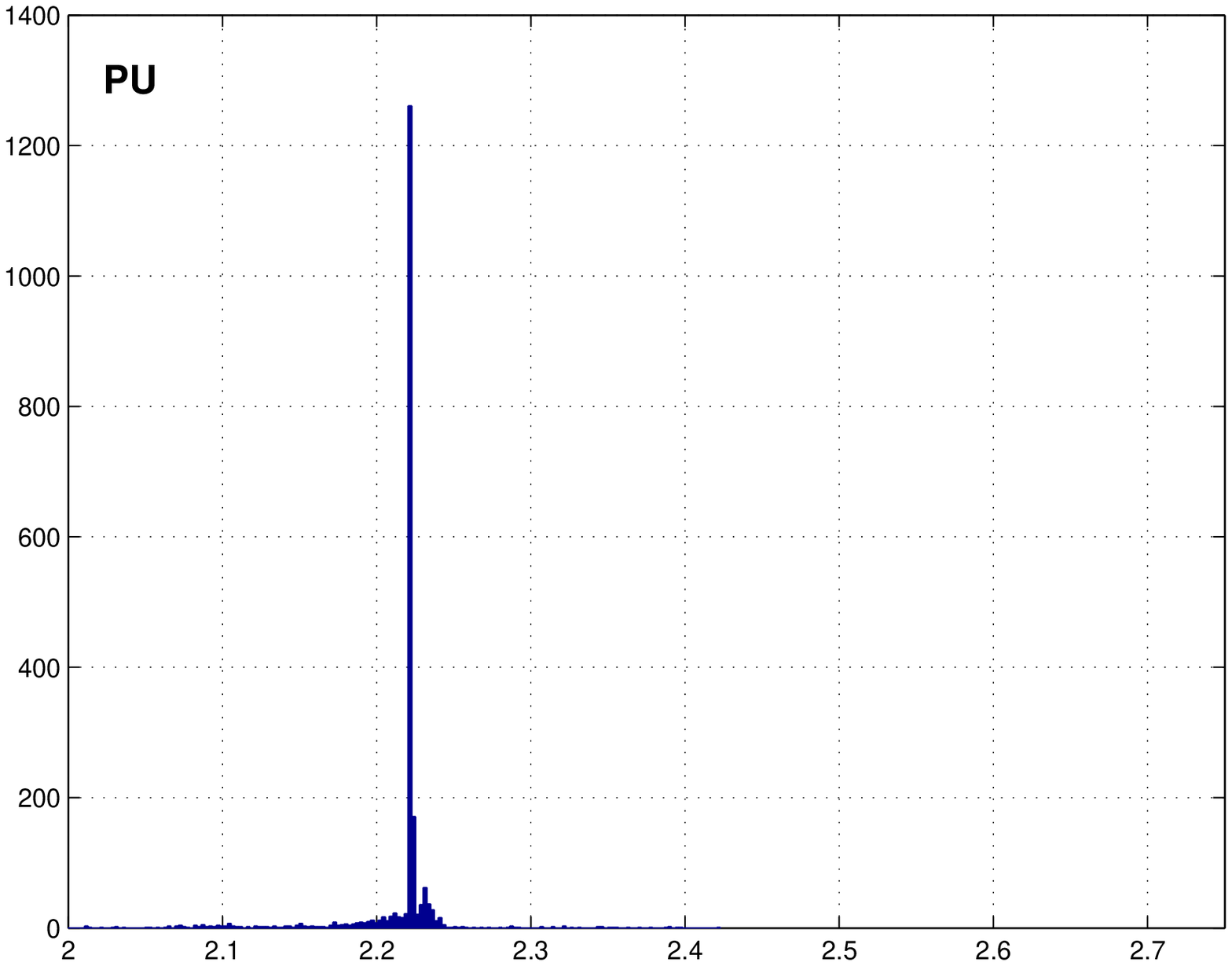} \\
\includegraphics[scale=0.35]{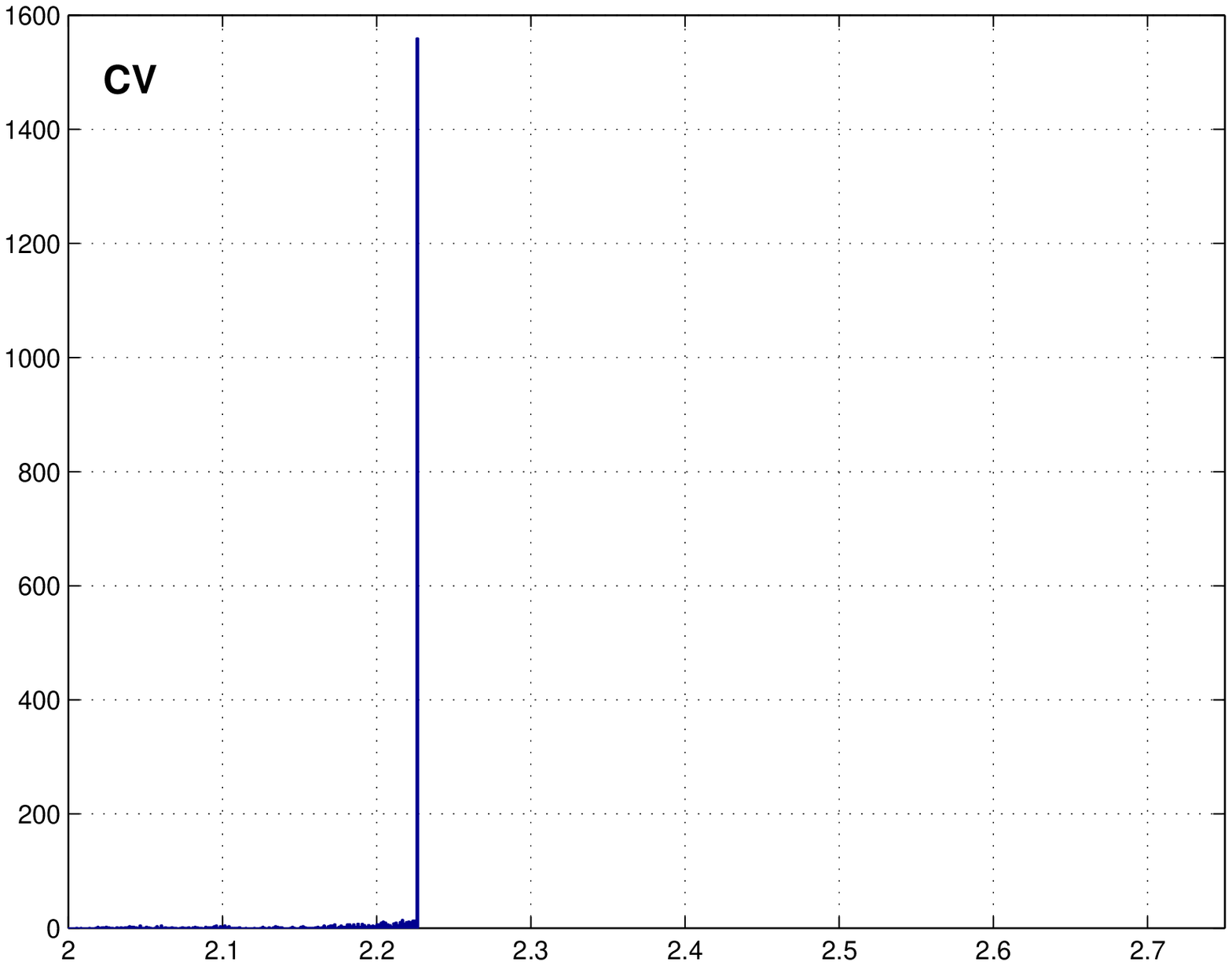} \quad
\includegraphics[scale=0.35]{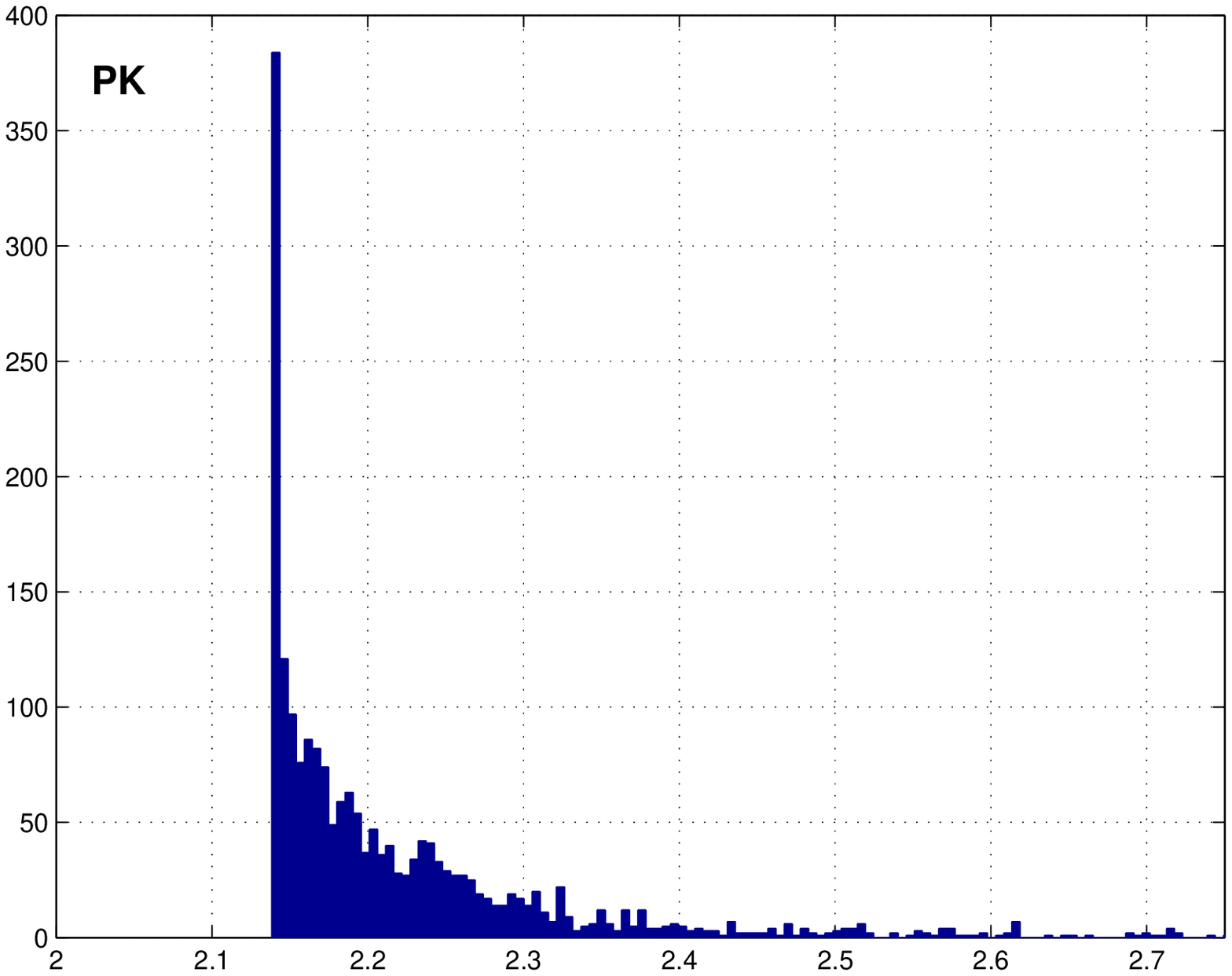} \\
\caption{Histograms of the scaled Euclidean hole radii for the sets of points in Figure~\ref{fig:SepDisE}}
\label{fig:HolRadE}
\end{center}
\end{figure}

\subsection{Separation}

For a variety of point sets, Figure~\ref{fig:SepDisE} illustrates the
distribution of the pairwise Euclidean separation distances for which
$|\PT{x}_i - \PT{x}_j| \leq 1$ \MARKED{red}{for the same sets of points as in
Figures~\ref{fig:HolRadE}.} The vertical lines denote, as in
\cite[Figure 1]{BrHaSa2012}, the hexagonal lattice distances scaled so
that the smallest distance coincides with the best packing distance.
\begin{figure}[ht]
\begin{center}
\includegraphics[scale=0.325]{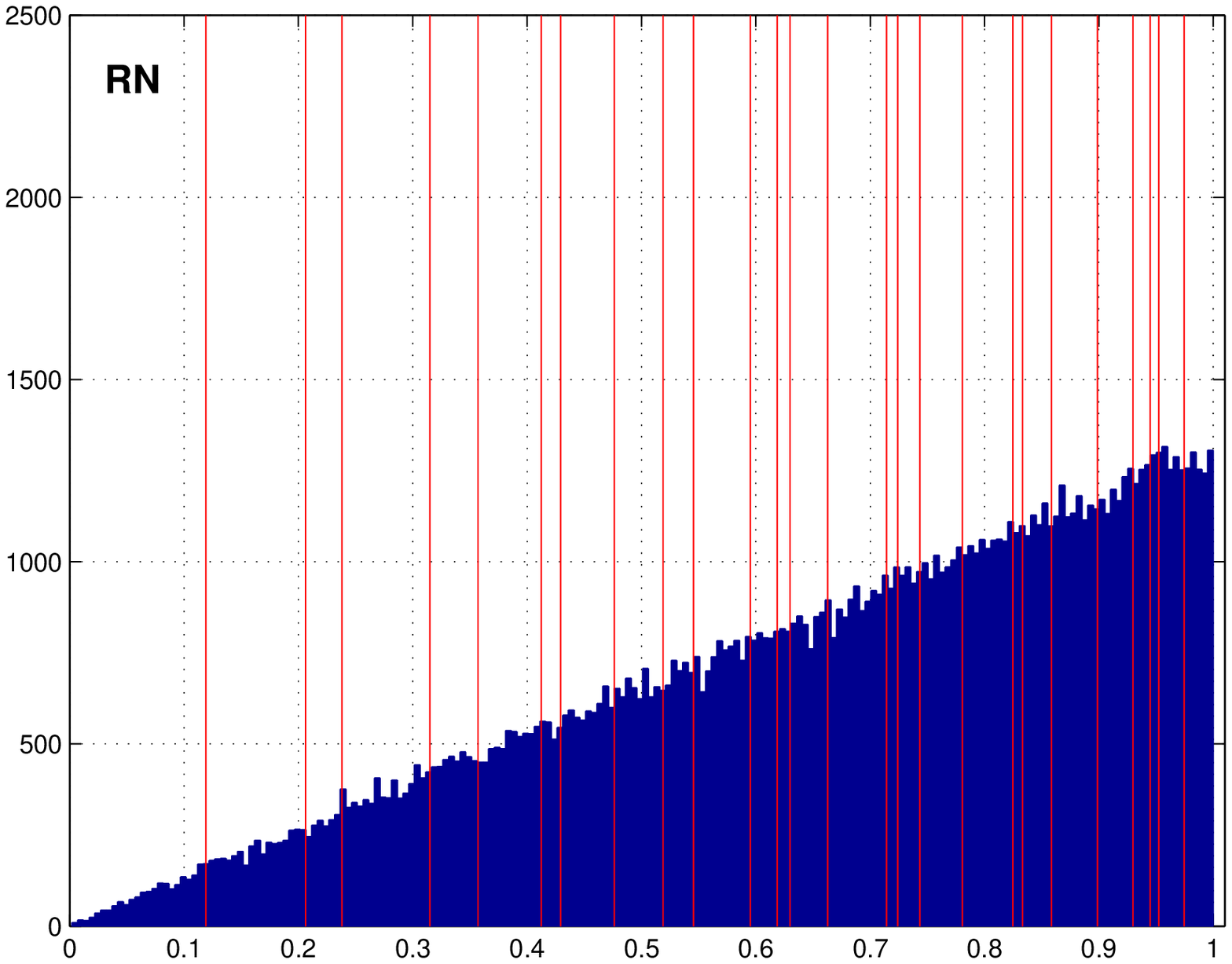} \quad
\includegraphics[scale=0.325]{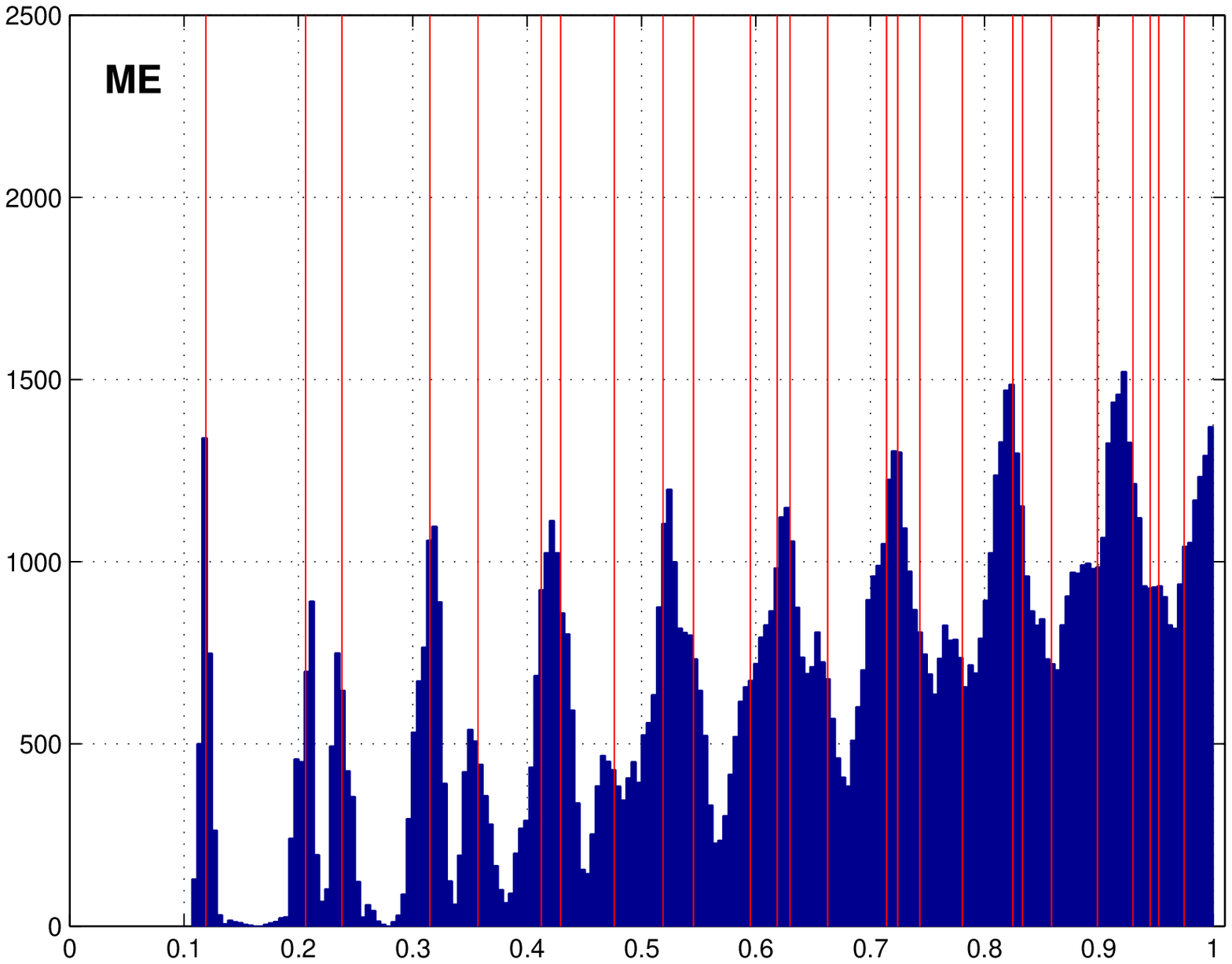} \\
\includegraphics[scale=0.325]{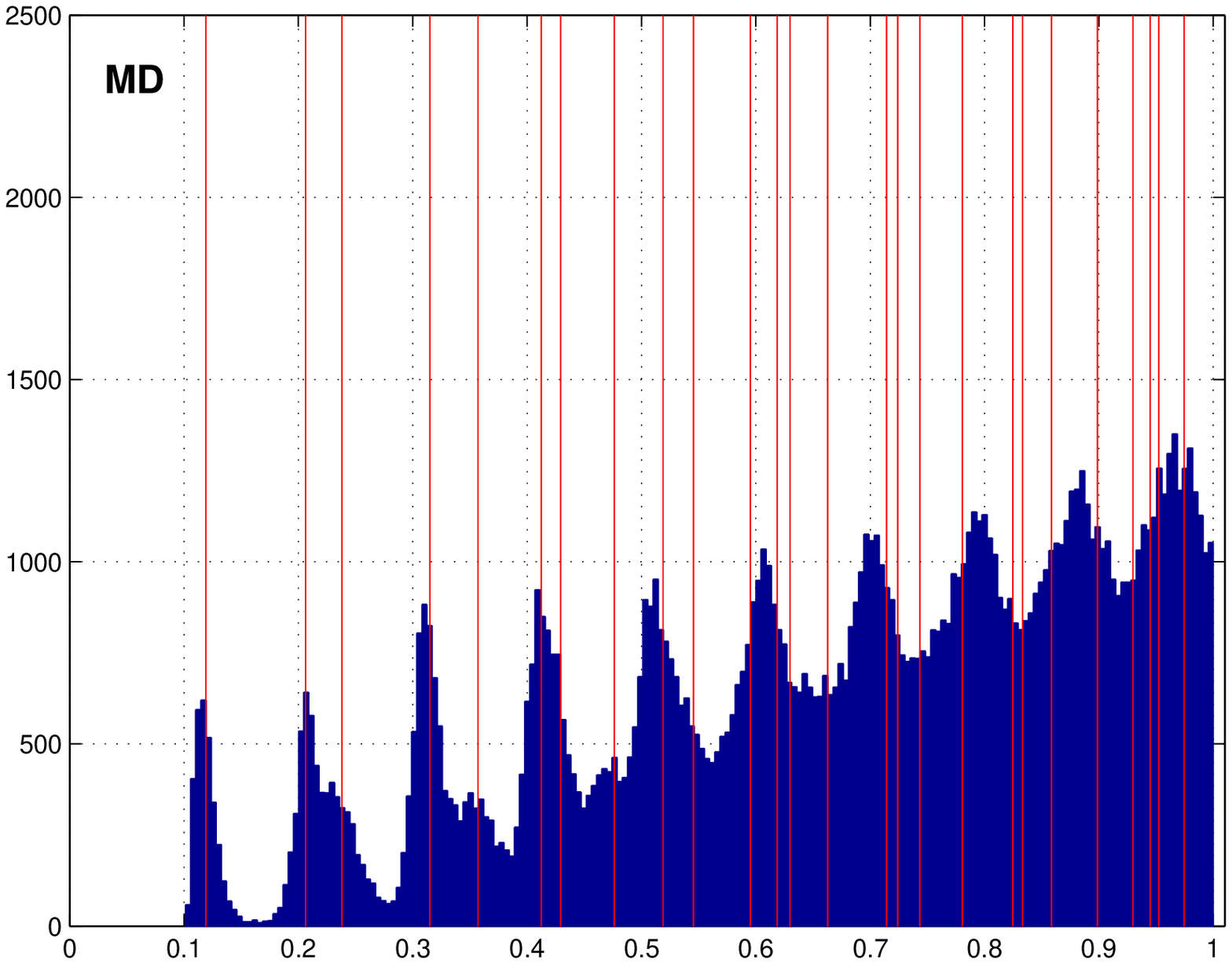} \quad
\includegraphics[scale=0.325]{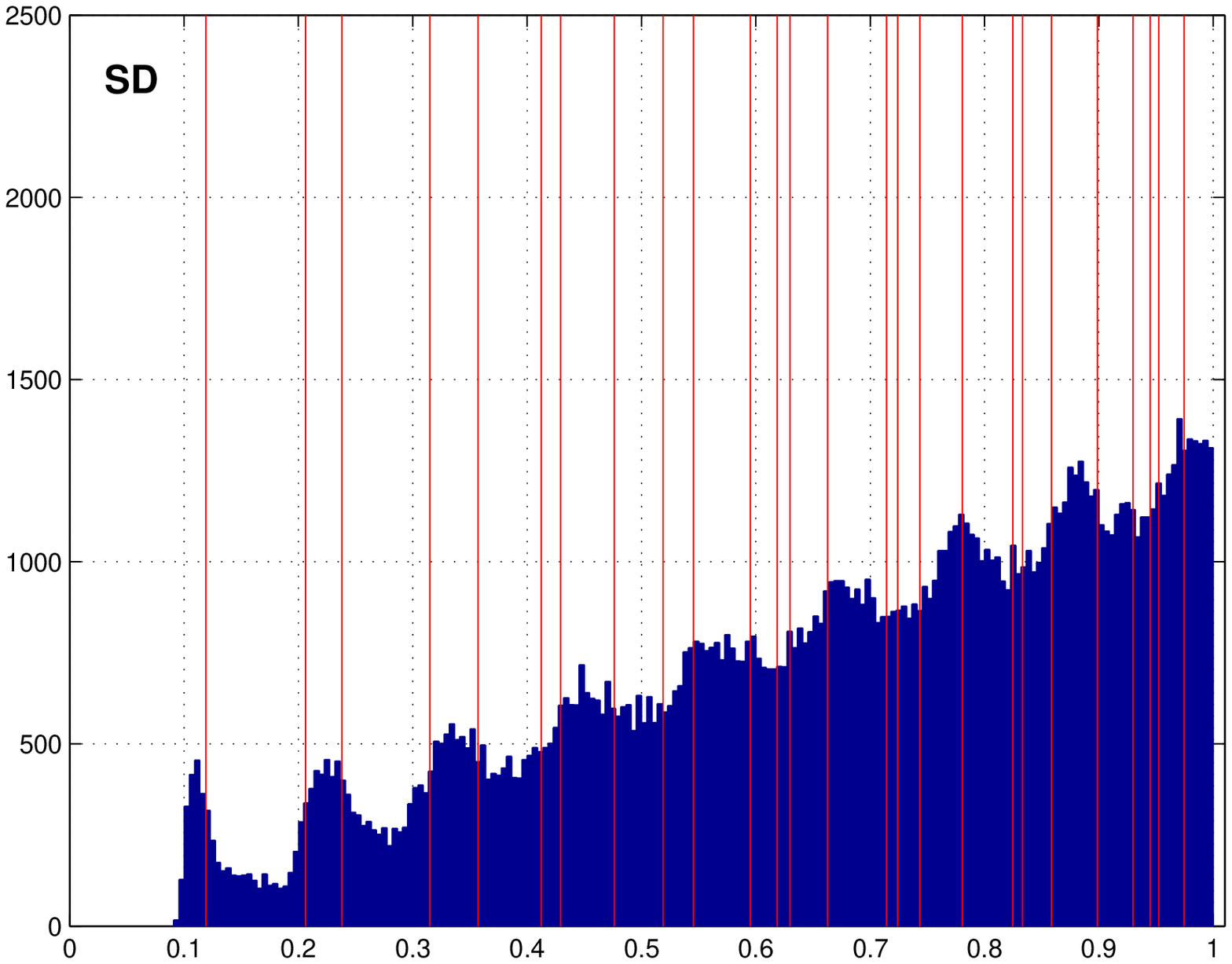} \\
\includegraphics[scale=0.325]{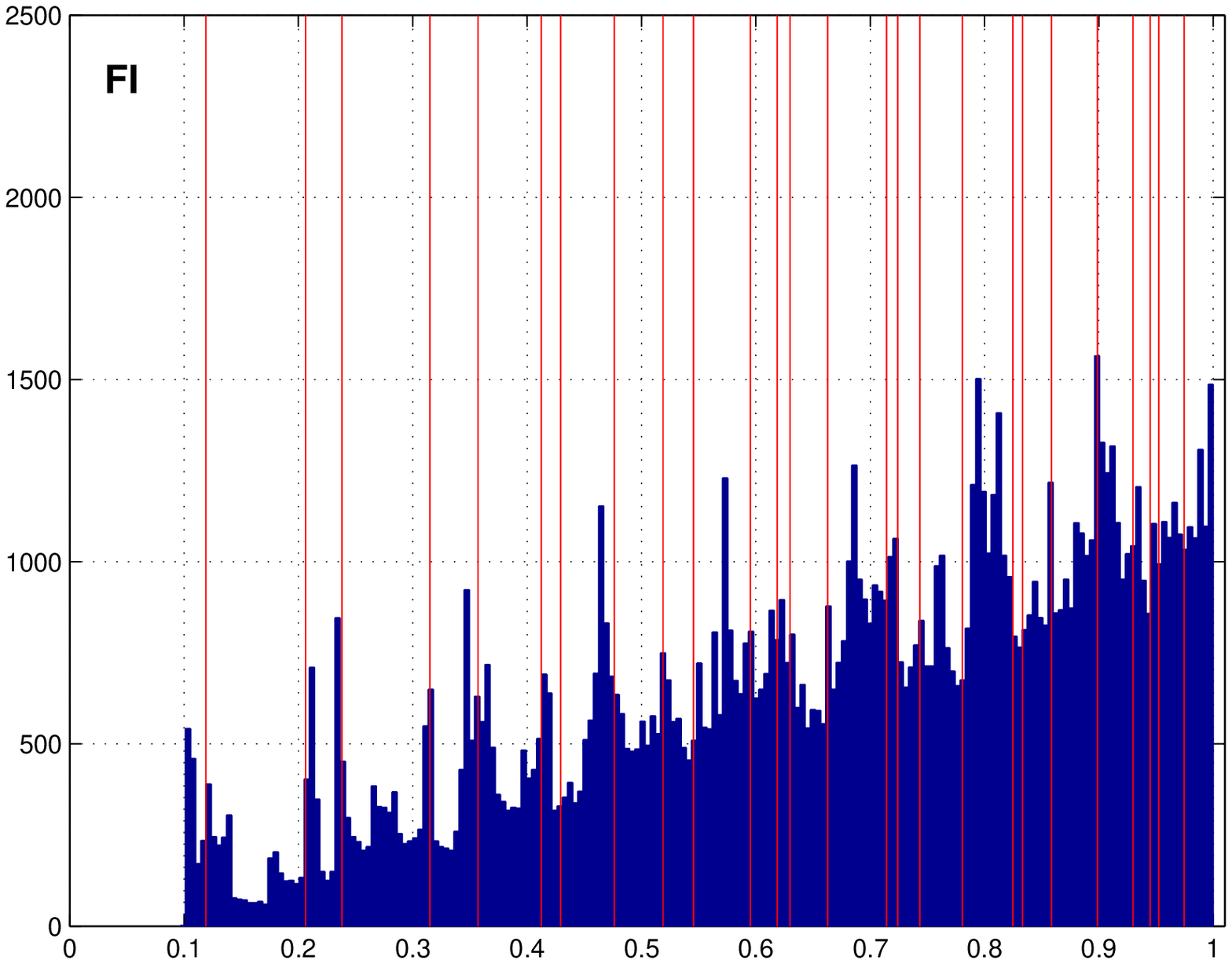} \quad
\includegraphics[scale=0.325]{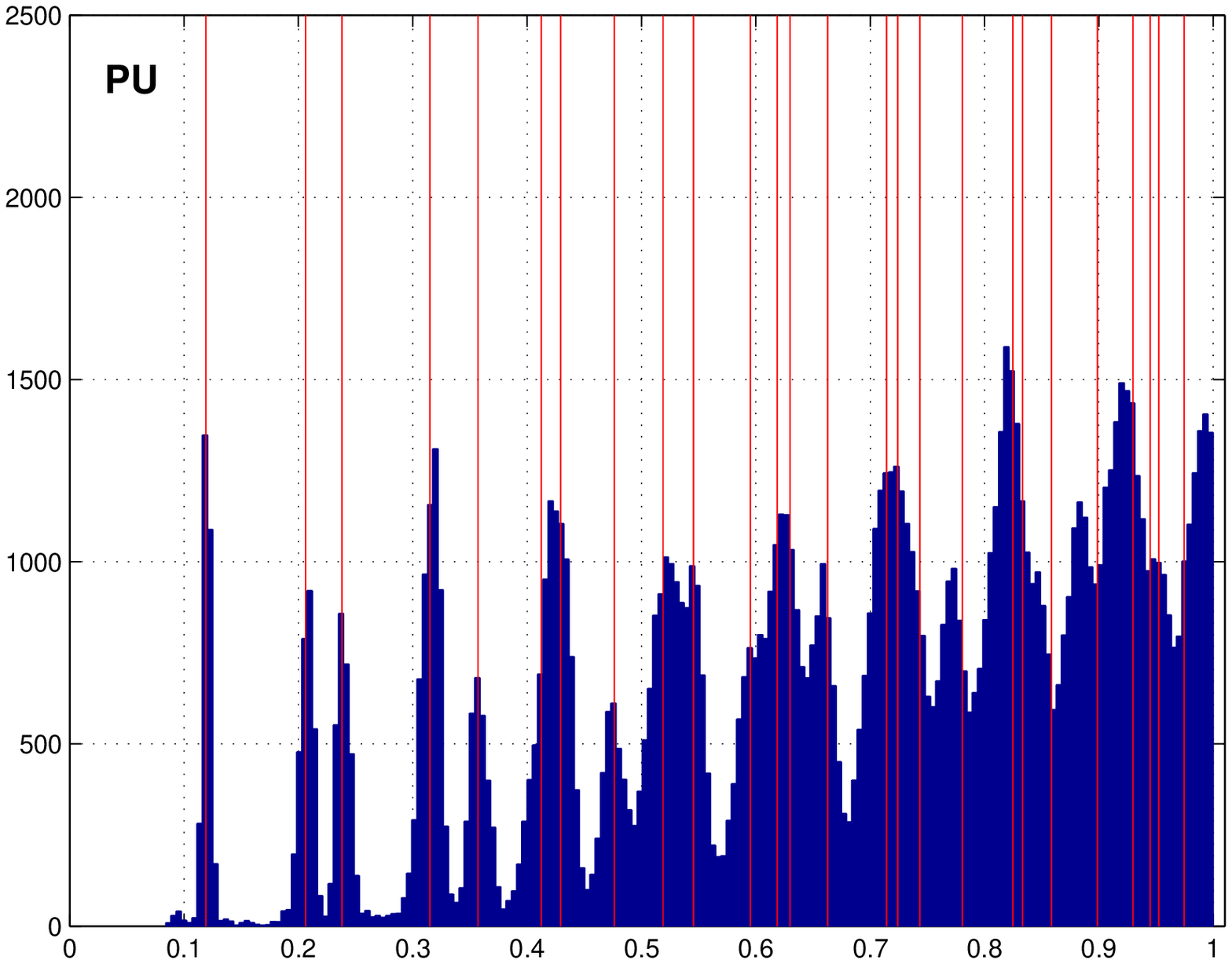} \\
\includegraphics[scale=0.325]{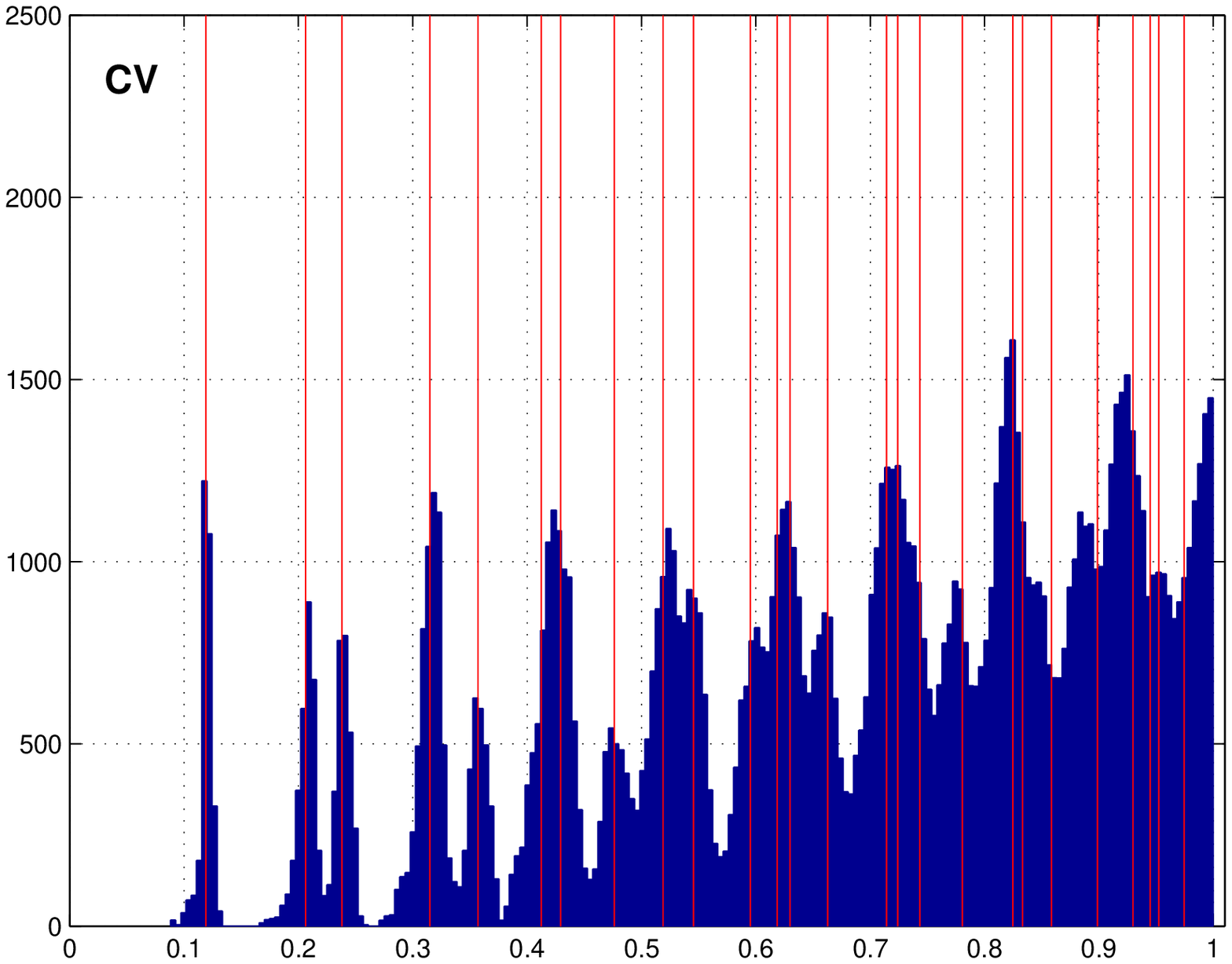} \quad
\includegraphics[scale=0.325]{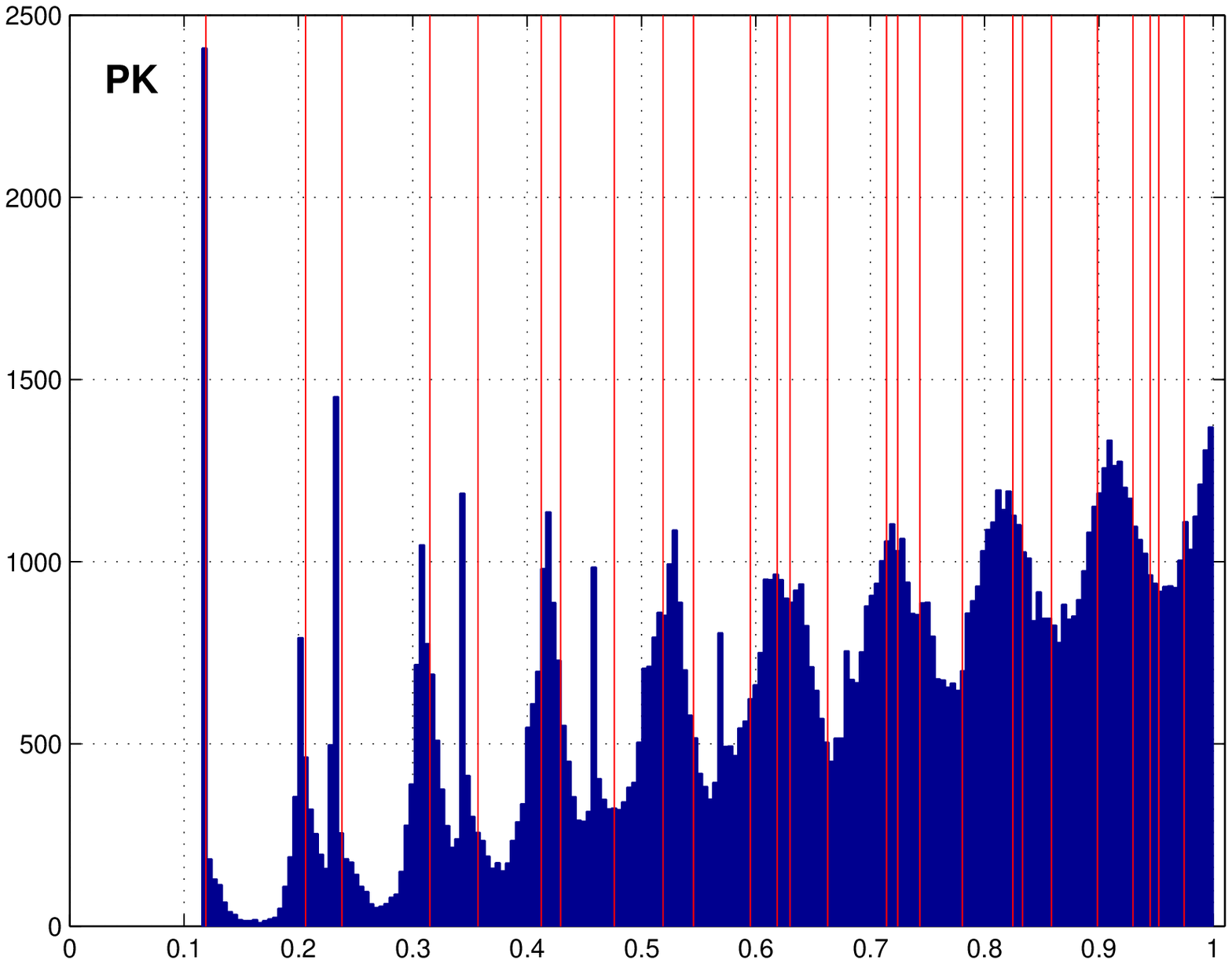} \\
\caption{Histograms of the Euclidean separation distances below $1$ for various sets of $N = 1024$ points
(except for SD where $N = 1059$).
The point set is indicated by  the two letters in the top left corner of each subplot.
The vertical lines are the hexagonal lattice distances scaled so the smallest distance is the best
packing distance}
\label{fig:SepDisE}
\end{center}
\end{figure}

\vspace{2em}

\MARKED{red}{
{\bf Acknowledgements:} The authors are grateful to two anonymous referees for their comments. We also wish to express our appreciation to Tiefeng Jiang and Jianqing Fan for very helpful discussions concerning Corollary~\ref{cor:limit.random.separation}.
}

\bibliographystyle{abbrv}
\bibliography{bibliography}

\begin{thebibliography}{10}

\bibitem{ArSa2015}
O.~Arizmendi and G.~Salazar.
\newblock Large area convex holes in random point sets.
\newblock arXiv:1506.04307v1 [math.MG], Jun 2015.

\bibitem{BaGoHe2013}
J.~Balogh, H.~Gonz{\'a}lez-Aguilar, and G.~Salazar.
\newblock Large convex holes in random point sets.
\newblock {\em Comput. Geom.}, 46(6):725--733, 2013.

\bibitem{BoKaFeHu2013}
K.~J. B{\"o}r{\"o}czky, F.~Fodor, and D.~Hug.
\newblock Intrinsic volumes of random polytopes with vertices on the boundary
  of a convex body.
\newblock {\em Trans. Amer. Math. Soc.}, 365(2):785--809, 2013.

\bibitem{BoSaRu2012}
J.~Bourgain, P.~Sarnak, and Z.~Rudnick.
\newblock {Local Statistics of Lattice Points on the Sphere}.
\newblock In D.~P. Hardin, D.~S. Lubinsky, and B.~Z. Simanek, editors, {\em
  {Modern Trends in Constructive Function Theory: Conference in Honor of Ed
  Saff's 70th Birthday: Constructive Functions 2014, May 26--30, 2014,
  Vanderbilt University, Nashville, Tennessee}}, volume 661, pages 269--282.
  American Mathematical Soc., 2016.

\bibitem{BrHaSa2012}
J.~S. Brauchart, D.~P. Hardin, and E.~B. Saff.
\newblock The next-order term for optimal {R}iesz and logarithmic energy
  asymptotics on the sphere.
\newblock In {\em {Recent Advances in Orthogonal Polynomials, Special
  Functions, and Their Applications}}, volume 578 of {\em Contemporary
  Mathematics}. American Mathematical Society, 2012.

\bibitem{BuMuTi1985}
C.~Buchta, J.~M{\"u}ller, and R.~F. Tichy.
\newblock Stochastical approximation of convex bodies.
\newblock {\em Math. Ann.}, 271(2):225--235, 1985.

\bibitem{BueCuLo2010}
P.~B{\"u}rgisser, F.~Cucker, and M.~Lotz.
\newblock Coverage processes on spheres and condition numbers for linear
  programming.
\newblock {\em Ann. Probab.}, 38(2):570--604, 2010.

\bibitem{CaFaJi2013}
T.~Cai, J.~Fan, and T.~Jiang.
\newblock Distributions of angles in random packing on spheres.
\newblock {\em J. Mach. Learn. Res.}, 14:1837--1864, 2013.

\bibitem{CaChe2014}
P.~Calka and N.~Chenavier.
\newblock Extreme values for characteristic radii of a {P}oisson-{V}oronoi
  tessellation.
\newblock {\em Extremes}, 17(3):359--385, 2014.

\bibitem{DaNa2003}
H.~A. David and H.~N. Nagaraja.
\newblock {\em Order {S}tatistics}.
\newblock Wiley Series in Probability and Statistics. Wiley-Interscience [John
  Wiley \& Sons], Hoboken, NJ, third edition, 2003.

\bibitem{NIST:DLMF}
{NIST Digital Library of Mathematical Functions}.
\newblock http://dlmf.nist.gov/, Release 1.0.9 of 2014-08-29.
\newblock Online companion to \cite{Olver:2010:NHMF}.

\bibitem{KoJiJa2004}
J.~R. Kolar, R.~Jirik, and J.~Jan.
\newblock {Estimator comparison of the Nakagami-$m$ parameter and its
  application in echocardiography}.
\newblock {\em Radioengineering}, 13(1):8--12, 2004.

\bibitem{KuSa1998}
A.~B.~J. Kuijlaars and E.~B. Saff.
\newblock Asymptotics for minimal discrete energy on the sphere.
\newblock {\em Trans. Amer. Math. Soc.}, 350(2):523--538, 1998.

\bibitem{Ma1988}
H.~Maehara.
\newblock A threshold for the size of random caps to cover a sphere.
\newblock {\em Ann. Inst. Statist. Math.}, 40:665--670, 1988.

\bibitem{Mi1970}
R.~E. Miles.
\newblock A synopsis of ``{P}oisson flats in {E}uclidean spaces''.
\newblock {\em Izv. Akad. Nauk Armjan. SSR Ser. Mat.}, 5(3):263--285, 1970.

\bibitem{Mi1971}
R.~E. Miles.
\newblock Isotropic random simplices.
\newblock {\em Advances in Appl. Probability}, 3:353--382, 1971.

\bibitem{Mo1989}
J.~M{\o}ller.
\newblock Random tessellations in {${\bf R}^d$}.
\newblock {\em Adv. in Appl. Probab.}, 21(1):37--73, 1989.

\bibitem{Mu1966}
C.~M{\"u}ller.
\newblock {\em Spherical Harmonics}, volume~17 of {\em Lecture Notes in
  Mathematics}.
\newblock Springer-Verlag, Berlin, 1966.

\bibitem{Mu1990}
J.~S. M{\"u}ller.
\newblock Approximation of a ball by random polytopes.
\newblock {\em J. Approx. Theory}, 63(2):198--209, 1990.

\bibitem{Na1960}
M.~Nakagami.
\newblock The m-distribution, a general formula of intensity of rapid fading.
\newblock In W.~C. Hoffman, editor, {\em Statistical Methods in Radio Wave
  Propagation: Proceedings of a Symposium held June 18-20, 1958}, pages 3--36.
  Pergamon Press, 1960.

\bibitem{Olver:2010:NHMF}
F.~W.~J. Olver, D.~W. Lozier, R.~F. Boisvert, and C.~W. Clark, editors.
\newblock {\em {NIST Handbook of Mathematical Functions}}.
\newblock Cambridge University Press, New York, NY, 2010.
\newblock Print companion to \cite{NIST:DLMF}.

\bibitem{Rei2002}
M.~Reitzner.
\newblock Random points on the boundary of smooth convex bodies.
\newblock {\em Trans. Amer. Math. Soc.}, 354(6):2243--2278, 2002.

\bibitem{RezS2015online}
A.~Reznikov and E.~B. Saff.
\newblock {The Covering Radius of Randomly Distributed Points on a Manifold}.
\newblock {\em Int. Math. Res. Not. {IMRN}}, 2015 (online).

\bibitem{SchWe2003}
C.~Sch{\"u}tt and E.~Werner.
\newblock Polytopes with vertices chosen randomly from the boundary of a convex
  body.
\newblock In {\em Geometric aspects of functional analysis}, volume 1807 of
  {\em Lecture Notes in Math.}, pages 241--422. Springer, Berlin, 2003.

\bibitem{Sh1972}
L.~A. Shepp.
\newblock Covering the circle with random arcs.
\newblock {\em Israel J. Math.}, 11:328--345, 1972.

\bibitem{SlWo2004}
I.~H. Sloan and R.~S. Womersley.
\newblock Extremal systems of points and numerical integration on the sphere.
\newblock {\em Adv. Comput. Math.}, 21(1-2):107--125, 2004.

\bibitem{St1906}
E.~{Steinitz}.
\newblock {\"Uber die {\it Euler}schen Polyederrelationen.}
\newblock {\em {Arch. der Math. u. Phys. (3)}}, 11:86--88, 1906.

\bibitem{We1962}
J.~G. Wendel.
\newblock A problem in geometric probability.
\newblock {\em Math. Scand.}, 11:109--111, 1962.

\end{thebibliography}
\end{document}